\DeclareMathOperator{\Hom}{Hom}
\DeclareMathOperator{\map}{Map}
\DeclareMathOperator{\End}{End}
\numberwithin{equation}{section}
\newtheorem{thm}{Theorem}[section]
\newtheorem*{thma}{Theorem A}
\newtheorem*{thma'}{Theorem A'}
\newtheorem*{thmb}{Theorem B}
\newtheorem*{thmb'}{Theorem B'}
\newtheorem*{thmc}{Theorem C}
\newtheorem*{thmd}{Theorem D}
\newtheorem*{thme}{Theorem E}
\newtheorem*{thme'}{Theorem E'}
\newtheorem{cor}[thm]{Corollary}
\newtheorem*{corA}{Corollary A}
\newtheorem{lem}[thm]{Lemma}
\newtheorem{prop}[thm]{Proposition}
\theoremstyle{definition}
\newtheorem{defn}[thm]{Definition}
\newtheorem{rem}[thm]{Remark}
\newtheorem{expl}[thm]{Example}
\newcommand{\id}{\textup{id}}
\newcommand{\aut}{\textup{Aut}}
\newcommand{\emb}{\textup{emb}}
\newcommand{\config}{\mathsf{con}} 
\newcommand{\fin}{\mathsf{Fin}}
\newcommand{\boxfin}{\mathsf{Boxfin}}
\newcommand{\simp}{\mathsf{simp}}
\newcommand{\op}{\textup{op}}
\newcommand{\FF}{\mathbb F}
\newcommand{\RR}{\mathbb R}
\newcommand{\ZZ}{\mathbb Z}
\newcommand{\QQ}{\mathbb Q}
\newcommand{\kk}{\Bbbk}
\newcommand{\PaB}{\mathrm{PaB}}
\newcommand{\Mat}{\mathrm{Mat}}
\newcommand{\limsub}[1]{\begin{array}[t]{cc} \textup{lim} \\
[-1.7mm] \scriptstyle{#1} \end{array}}
\newcommand{\colimsub}[1]{\begin{array}[t]{cc} \textup{colim} \\
[-1.7mm] \scriptstyle{#1} \end{array}}
\newcommand{\holimsub}[1]{\begin{array}[t]{cc} \textup{holim} \\ [-1mm]
\scriptstyle{#1} \end{array}}
\newcommand{\hocolimsub}[1]{\begin{array}[t]{cc} \textup{hocolim} \\
[-1.2mm] \scriptstyle{#1} \end{array}}
\newcommand{\uli}{\underline}
\newcommand{\pre}{\textup{pre}}
\newcommand{\fun}{\textup{Fun}}
\newcommand{\cat}{\mathbf}
\newcommand{\tree}{\mathsf{Tree}}
\renewcommand{\S}{\mathbf{S}}
\newcommand{\Op}{\mathbf{Op}}
\newcommand{\Pro}{\mathrm{Pro}}
\newcommand{\ProS}{\Pro(\S)}
\newcommand{\GT}{\mathrm{GT}}
\begin{document}

\title[Formality of little disks operads in positive characteristic]{On the Formality of the little disks operad in positive characteristic}
\author[Pedro Boavida]{Pedro Boavida de Brito}
\author{Geoffroy Horel}

\address{Dept. of Mathematics, IST, Univ. of Lisbon, Av. Rovisco Pais, Lisboa, Portugal}%
\email{pedrobbrito@tecnico.ulisboa.pt}

\address{Universit\'{e} Sorbonne Paris Nord, LAGA, CNRS, UMR 7539, 99 avenue Jean-Baptiste Cl\'{e}ment\\ 93430 Villetaneuse\\ France}
\address{\'{E}cole normale sup\'{e}rieure, DMA, CNRS, UMR 8553, 45 rue d'Ulm, 75230 Paris Cedex 05, France}
\email{horel@math.univ-paris13.fr}

\thanks{We gratefully acknowledge the support through: grant SFRH/BPD/99841/2014 and project MAT-PUR/31089/2017, funded by Funda\c c\~{a}o para a Ci\^{e}ncia e Tecnologia; projects ANR-14-CE25-0008 SAT, ANR-16-CE40-0003 ChroK, ANR-18-CE40-0017 PerGAMo, funded by Agence Nationale pour la Recherche. We also thank the Hausdorff Institute for Mathematics for the excellent working conditions during the Junior Trimester Program, where this project started. 
}
\begin{abstract}
Using a variant of the Boardman-Vogt tensor product, we construct an action of the Grothendieck-Teichm\"uller group on the completion of the little $n$-disks operad $E_n$. This action is used to establish a partial formality theorem for $E_n$ with mod $p$ coefficients and to give a new proof of the formality theorem in characteristic zero.
\end{abstract}
\maketitle

The quasi-isomorphism type of the singular chains of the little disks operad $E_n$ is by now well understood in characteristic zero thanks to the celebrated formality theorem. This theorem asserts that this dg-operad is \emph{formal}, i.e. it is quasi-isomorphic to its homology, a dg-operad with zero differential. The history of this result is long and winding. The case $n=2$ was first proved by Tamarkin in \cite{tamarkinformality}, the higher dimensional cases with coefficients in $\RR$ by Kontsevich in \cite{kontsevichoperads} and Lambrechts-Voli\'c in \cite{lambrechtsformality}. Finally, the case of $\QQ$-coefficients was done by Fresse-Willwacher in \cite{fresseintrinsic}.

In the present paper, we study the analogous question in positive characteristic. The case of the little $2$-disks operad was considered by Joana Cirici and the second author in \cite[Theorem 6.7]{CiriciHorel}. Given a prime $p$, our first  result is the following. 

\begin{thma}
The dg-operad $C_*(E_n, \FF_p)$ is $(n-1)(p-2)$-formal.
\end{thma}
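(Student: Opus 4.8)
The plan is to exploit the action of the Grothendieck--Teichm\"uller group on the $\FF_p$-completion of $E_n$ produced earlier in the paper, together with a \emph{weight implies formality} argument in the style of \cite{CiriciHorel}. First I would choose an element $\sigma\in\widehat{\GT}$ whose image under the cyclotomic character $\widehat{\GT}\to\widehat{\ZZ}^\times$ reduces modulo $p$ to a generator $\lambda$ of $\FF_p^\times$; such a $\sigma$ exists because the cyclotomic character is surjective and $\widehat{\ZZ}^\times\twoheadrightarrow\FF_p^\times$. The homology $H_*(E_n;\FF_p)\cong H_*(F(-,\RR^n);\FF_p)$ is torsion-free and concentrated in degrees divisible by $n-1$, generated operadically by the class in $H_{n-1}(E_n(2))$. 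On this generator the cyclotomic action is by $\lambda$, so by multiplicativity $\sigma$ acts on the weight-$k$ part $H_{(n-1)k}(E_n;\FF_p)$ by the scalar $\lambda^{k}$. In particular the induced automorphism of the homology operad is \emph{semisimple}, with eigenvalue $\lambda^{k}$ in weight $k$, and the eigenvalues $\lambda^{0},\lambda^{1},\dots,\lambda^{p-2}$ are pairwise distinct since $\lambda$ has order exactly $p-1$.

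Working $\sigma$-equivariantly throughout, the homology operad inherits a transferred $\infty$-operad structure whose higher operations are $\sigma$-equivariant and strictly raise homological degree. Because $H_*(E_n;\FF_p)$ is concentrated in multiples of $n-1$, a nonzero higher operation must raise degree by a positive multiple $d$ of $n-1$, and it then multiplies the $\sigma$-eigenvalue on the target by $\lambda^{d/(n-1)}$ while leaving it unchanged on the source (weight being additive under operadic composition). Hence $\sigma$-equivariance forces such an operation to vanish unless $\lambda^{d/(n-1)}=1$, i.e.\ unless $d/(n-1)\equiv 0\pmod{p-1}$. The smallest positive solution is $d=(n-1)(p-1)$, so on the range of degrees $\le(n-1)(p-2)$ the weights $0,1,\dots,p-2$ are mutually distinct and no degree-raising equivariant operation can survive. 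Thus the transferred structure is trivial in that range, and $C_*(E_n;\FF_p)$ is joined to $H_*(E_n;\FF_p)$ by a zig-zag of operad maps inducing isomorphisms in degrees $\le(n-1)(p-2)$, which is exactly $(n-1)(p-2)$-formality. The bound is sharp from this viewpoint: the first potentially nonzero higher operation lands in degree $(n-1)(p-1)$, where weight $p-1$ collides with weight $0$ in $\FF_p^\times$.

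I expect two nontrivial inputs. The first is the existence and operadic compatibility of the $\widehat{\GT}$-action on the $\FF_p$-completion; this is precisely the output of the Boardman--Vogt-based construction in the body of the paper, which I would invoke as a black box. The second, and the real crux, is the identification of the action on homology by the weight character: one must know that $\sigma$ acts on the degree-$(n-1)$ generator \emph{exactly} by $\lambda$, and not merely up to a unipotent correction, since otherwise the eigenvalue separation has no force. I would establish this semisimplicity by reducing to the arity-two class, where the action is governed by the cyclotomic character on $H_{n-1}(E_n(2))\cong H_{n-1}(S^{n-1};\FF_p)$, and then propagating multiplicatively across the operad. A final, routine point is descent from the completion: since $\FF_p$-completion is an $\FF_p$-homology isomorphism, the formality statement on the completed operad transfers back to $C_*(E_n;\FF_p)$ in the stated range.
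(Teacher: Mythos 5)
Your first half follows the paper's own strategy: use the Grothendieck--Teichm\"uller action on the $\FF_p$-completion to produce an automorphism lifting a unit $\lambda$ of order $p-1$, and note that since $H_*(E_n,\FF_p)$ is the Poisson operad, generated in arity $2$, the lift acts on $H_{k(n-1)}$ by $\lambda^k$ (this is the paper's Proposition \ref{prop : action on homology}; your worry about a unipotent correction is vacuous because $H_{n-1}(E_n(2),\FF_p)$ is one-dimensional). The descent from the completion, which you call routine, is Proposition \ref{prop:chains-dend} and occupies an appendix, since chains on a pro-space and rectification of $\infty$-operads need care; but that is not where the problem lies.

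The genuine gap is the step ``the homology operad inherits a transferred $\infty$-operad structure whose higher operations are $\sigma$-equivariant.'' Homotopy transfer for \emph{symmetric} operads requires contractions of each $C_*(E_n(k),\FF_p)$ onto its homology that are $\Sigma_k$-equivariant, and these do not exist in characteristic $p$: $\FF_p[\Sigma_k]$ is not semisimple for $k\geq p$, and in fact, as recalled in the paper from \cite[Remark 6.9]{CiriciHorel}, the complexes $C_*(E_n(p),\FF_p)$ and $H_*(E_n(p),\FF_p)$ are \emph{not} $\Sigma_p$-equivariantly quasi-isomorphic (their homotopy orbits differ). So there is no transferred structure on the symmetric sequence $H_*(E_n,\FF_p)$ at all; the transfer step alone, which asserts an equivalence of underlying equivariant objects, already contradicts this. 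You cannot dodge the issue by staying in low degrees, because arities $\geq p$ carry homology in degrees $\leq (n-1)(p-2)$, so they enter the statement of Theorem A. This is exactly why the paper never chooses contractions: it first applies the lax symmetric monoidal truncation functor $t_{\leq N}$ and then invokes Theorem \ref{theo : purity implies formality} (after \cite{CiriciHorel}), whose content is that the forgetful functor from $\alpha$-pure complexes is $N$-formal \emph{as a lax symmetric monoidal functor}, i.e.\ via a zig-zag of natural lax monoidal transformations. Naturality is what makes the zig-zag automatically compatible with the $\Sigma_k$-actions and with the operad composition --- precisely the compatibility that a transfer argument must construct by hand and cannot in characteristic $p$. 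Your eigenvalue bookkeeping (first collision of weights in degree $(n-1)(p-1)$) is the same arithmetic that yields the paper's bound $N=\lfloor (h-1)/\alpha\rfloor=(n-1)(p-2)$ with $h=p-1$, $\alpha=1/(n-1)$, but it has to be run inside that natural, transfer-free framework to give a proof.
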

Recall that a dg-operad $P$ (that is, an operad in chain complexes over a field) is called \emph{$N$-formal} if $P$ is connected to $H_*(P)$ via a zig-zag of maps of operads that induce isomorphisms in homology in degree $\leq N$. 

\medskip
For an integer $k$, let $E_n^{\leq k}$ be the truncation of $E_n$ to arities $\leq k$. This is the collection of spaces $E_n(i)$ with $i\leq k$ equipped with all the operations of the operad $E_n$ that only involve these spaces. Since the top non-vanishing degree of $H_*(E_n(k),\FF_p)$ is $(k-1)(n-1)$, we have:
\begin{corA}
The $(p-1)$-truncated dg-operad $C_*(E_n^ {\leq p-1},\FF_p)$ is formal.
\end{corA}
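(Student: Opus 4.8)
The plan is to deduce this directly from Theorem~A by applying the arity-truncation functor $(-)^{\leq p-1}$ to the zig-zag that Theorem~A provides. That theorem gives a zig-zag of maps of dg-operads
\[
C_*(E_n,\FF_p) = P_0 \longleftrightarrow P_1 \longleftrightarrow \cdots \longleftrightarrow P_m = H_*(E_n,\FF_p),
\]
in which each arrow, pointing in either direction, induces an isomorphism on homology in every degree $\leq N$, where $N=(n-1)(p-2)$; here the target is the homology operad $H_*(C_*(E_n,\FF_p)) = H_*(E_n,\FF_p)$, regarded as a dg-operad with zero differential.

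First I would record that arity-truncation $(-)^{\leq p-1}$ is a functor from dg-operads to $(p-1)$-truncated dg-operads: it discards the arities $>p-1$ and keeps every structure map among the remaining ones. Because homology is computed arity-wise and degree-wise, truncation is exact and commutes with passage to homology. In particular $\big(H_*(E_n,\FF_p)\big)^{\leq p-1} = H_*(E_n^{\leq p-1},\FF_p)$, and applying $(-)^{\leq p-1}$ to the whole zig-zag yields a zig-zag of $(p-1)$-truncated dg-operads
\[
C_*(E_n^{\leq p-1},\FF_p) = P_0^{\leq p-1} \longleftrightarrow \cdots \longleftrightarrow P_m^{\leq p-1} = H_*(E_n^{\leq p-1},\FF_p)
\]
connecting the truncated chains to their homology.

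The crux is then a degree count. In arity $k\leq p-1$ the homology $H_*(E_n(k),\FF_p)$ is concentrated in degrees $\leq (k-1)(n-1) \leq (p-2)(n-1) = N$, so after truncation there is no homology in any arity above degree $N$. Consequently a map of operads that is an isomorphism on homology in degrees $\leq N$ restricts, on arities $\leq p-1$, to a map that is an isomorphism on homology in \emph{all} degrees, i.e.\ a quasi-isomorphism of $(p-1)$-truncated dg-operads. Applying this to each arrow of the truncated zig-zag exhibits $C_*(E_n^{\leq p-1},\FF_p)$ as connected to its homology by a zig-zag of quasi-isomorphisms, which is exactly formality.

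There is essentially no hard step here: all the content lives in Theorem~A, and Corollary~A is the observation that its formality range $N=(n-1)(p-2)$ is precisely large enough to exhaust the full homology of every arity up to $p-1$. The only two points requiring a word of care are that truncation is exact, so that it commutes with homology, and that an $N$-equivalence becomes an honest quasi-isomorphism once all homology sits in degrees $\leq N$; both are immediate from the arity-wise and degree-wise nature of the definitions.
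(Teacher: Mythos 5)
Your degree count is exactly the point of the corollary, and it is the same observation the paper makes; but the step where you upgrade the arrows to quasi-isomorphisms has a genuine gap. The definition of $N$-formality only constrains the arrows of the zig-zag in homological degrees $\leq N$; it says nothing about the homology of the \emph{intermediate} operads $P_1,\dots,P_{m-1}$ above degree $N$. After arity truncation, the two endpoints $C_*(E_n^{\leq p-1},\FF_p)$ and $H_*(E_n^{\leq p-1},\FF_p)$ do have all their homology in degrees $\leq N=(n-1)(p-2)$ — that is your (correct) degree count — but an intermediate term $P_i^{\leq p-1}$ may perfectly well carry homology in degrees $>N$ on which the arrows are not isomorphisms; nothing in Theorem A rules this out. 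So your assertion that ``after truncation there is no homology in any arity above degree $N$'' is justified only at the two ends of the zig-zag, and the claim that an $N$-equivalence restricts to a quasi-isomorphism on arities $\leq p-1$ is false for maps between arbitrary operads. As written, your truncated zig-zag is still only a zig-zag of $N$-equivalences, which is not formality.

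The repair is the homological truncation functor $t_{\leq N}$ that the paper sets up in the section on purity and formality: $t_{\leq N}$ is lax symmetric monoidal, hence carries dg-operads to dg-operads, the projection $P\to t_{\leq N}P$ is a natural map of operads, and $H_j(t_{\leq N}P)=H_j(P)$ for $j\leq N$ while $H_j(t_{\leq N}P)=0$ for $j>N$. Applying $t_{\leq N}$ to the zig-zag of Theorem A turns \emph{every} arrow into an honest quasi-isomorphism, killing the possible high-degree homology of the intermediate terms (this is one direction of the paper's proposition that $P$ is $N$-formal if and only if $t_{\leq N}P$ is formal, quoted from \cite{CiriciHorel}). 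Now restrict to arities $\leq p-1$ and use your degree count where it is actually valid, namely at the endpoints: the map $C_*(E_n^{\leq p-1},\FF_p)\to t_{\leq N}C_*(E_n^{\leq p-1},\FF_p)$ is a quasi-isomorphism because $H_*(E_n(k),\FF_p)$ vanishes above degree $(k-1)(n-1)\leq N$ for $k\leq p-1$, and $t_{\leq N}H_*(E_n^{\leq p-1},\FF_p)=H_*(E_n^{\leq p-1},\FF_p)$ for the same reason. Concatenating gives a zig-zag of quasi-isomorphisms from $C_*(E_n^{\leq p-1},\FF_p)$ to its homology, which is the corollary. In short, your outline is the paper's argument, but it only becomes a proof once the operadic truncation $t_{\leq N}$ is inserted; without it the crucial step fails.
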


This statement is sharp. Indeed, as explained in \cite[Remark 6.9]{CiriciHorel}, the two chain complexes $C_*(E_n(p),\FF_p)$ and $H_*(E_n(p),\FF_p)$ are not quasi-isomorphic as $\Sigma_p$-equivariant chain complexes since their homotopy orbits differ. It follows that $C_*(E_n^{\leq p},\FF_p)$ is \emph{not} formal as a $p$-truncated dg-operad.

In relation to non-formality, we should also mention the recent results \cite{salvatoreplanar} of Paolo Salvatore who showed that the \emph{planar} (i.e. non-symmetric) little 2-disks operad is \emph{not} formal in characteristic $2$. He proved this by an explicit computation of the first obstruction to formality. Evidently, N-formality of operads implies N-formality of the underlying planar operads, so our results give a lower bound for potential obstructions to exist. It is unclear to us at this point if Salvatore's obstruction is specific to characteristic $2$ or if planar formality should never be expected to hold.

\medskip
The arguments used to prove Theorem A, which we will briefly outline at the end of the introduction, also produce a new proof of the formality theorem in characteristic zero. In fact, without much more effort we also obtain a relative version:
\begin{thmb}
Let $n$ and $d$ be positive integers and $d\geq 2$. Let $E_n\to E_{n+d}$ be a map between little disks operads. Then
\begin{enumerate}
\item[($\QQ$)] the map of dg-operads $C_*(E_n,\QQ)\to C_*(E_{n+d},\QQ)$ is formal,
\end{enumerate}
and
\begin{enumerate}
\item[(${\FF_p}$)] the map of dg-operads $C_*(E_n,\FF_p)\to C_*(E_{n+d},\FF_p)$ is $N$-formal, 
\end{enumerate}
where $N = \Delta(p-2)$ and $\Delta$ is the greatest common divisor of $n-1$ and $d$.
\end{thmb}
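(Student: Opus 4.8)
The plan is to upgrade the mechanism behind Theorem~A to the category of morphisms of dg-operads, exploiting that the Grothendieck--Teichm\"uller action constructed via the Boardman--Vogt tensor product is natural in $n$. First I would record that the stabilization $E_n\to E_{n+d}$ commutes with the $\GT$-action on the profinite completions, so that after passing to the $\FF_p$-chains model it gives a self-map of the arrow $C_*(E_n,\FF_p)\to C_*(E_{n+d},\FF_p)$, i.e.\ a commuting square. Fixing a generator $\lambda$ of $\FF_p^\times$ and taking the element of $\GT$ whose cyclotomic parameter is $\lambda$, I obtain an automorphism $\phi$ of this arrow (in the relevant homotopy-coherent sense). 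The key computation is the induced action on homology: $\phi$ acts on the weight-$w$ part of $H_*(E_n,\FF_p)$, which sits in homological degree $w(n-1)$, by $\lambda^{w}$, and on the weight-$w'$ part of $H_*(E_{n+d},\FF_p)$, in degree $w'(n+d-1)$, by $\lambda^{w'}$. Since $d\ge 2$ forces the inclusion $S^{n-1}\hookrightarrow S^{n+d-1}$ to be null-homotopic, the map $H_*(E_n)\to H_*(E_{n+d})$ vanishes in positive degrees, so it is automatically $\phi$-equivariant.

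Next I would invoke the morphism version of the purity-implies-formality criterion underlying Theorem~A: an arrow of dg-operads carrying a compatible automorphism whose homological action is by a weight grading is formal in the range of degrees in which the eigenvalues of that automorphism faithfully detect the weight. For a single operad this range is governed by $\lambda^0,\ldots,\lambda^{p-2}$ being pairwise distinct (a primitive root exhausts $\FF_p^\times$), which recovers the bound $(n-1)(p-2)$ of Theorem~A. The new point is that $\phi$ acts through a \emph{single} parameter $\lambda$ on both ends, while the two weight gradings are calibrated against the two different bracket degrees $n-1$ and $n+d-1$. The finest common increment of homological degree carrying homology on either side is $\Delta=\gcd(n-1,n+d-1)=\gcd(n-1,d)$, and a bookkeeping of this reindexing yields that the weight seen by $\phi$ advances synchronously on the two ends precisely while the reduced degree $m/\Delta$ stays $\le p-2$; this is exactly the assertion that $f$ is $\Delta(p-2)$-formal.

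The main obstacle is therefore the second step: making precise the relative purity criterion together with the range computation. Concretely, I expect the work to lie in controlling the off-diagonal (relative) part of the deformation complex of the morphism --- the higher coherences making $f$ an $\infty$-morphism --- and in checking that $\phi$-equivariance forces these to vanish in total degree $\le\Delta(p-2)$ because the relevant source and target weights cannot become congruent modulo $p-1$ below that range. This is where the arithmetic of the two bracket degrees, and not merely the minimum $(n-1)(p-2)$ of the two absolute bounds, enters: a misalignment between $n-1$ and $n+d-1$ (small $\Delta$) shortens the interval on which the single character $\lambda$ can simultaneously separate both weight gradings.

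Finally, part~($\QQ$) follows from the same argument with the bound removed. Over $\QQ$ the weight grading is realized by an honest $\mathbb{G}_m$-action --- equivalently, one uses an element scaling the weight-$w$ part by $\lambda^{w}$ with $\lambda$ of infinite order --- so all powers $\lambda^{w}$ are distinct and the eigenvalues detect the weight in every degree. The relative purity criterion then yields formality of $C_*(E_n,\QQ)\to C_*(E_{n+d},\QQ)$ with no restriction on the degree.
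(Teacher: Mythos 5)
Your overall strategy --- build a homotopy-coherent automorphism of the arrow $C_*(E_n,\kk)\to C_*(E_{n+d},\kk)$ and feed it into the purity-implies-formality machine --- is indeed the paper's strategy, but the mechanism you propose breaks down precisely at the point where $\Delta=\gcd(n-1,d)$ is supposed to enter. If you act by a \emph{single} Grothendieck--Teichm\"uller element with cyclotomic parameter $\lambda$ on both ends, then (as you compute) the eigenvalue in homological degree $j$ is $\lambda^{j/(n-1)}$ on the source and $\lambda^{j/(n+d-1)}$ on the target. These are purities with two \emph{different} exponents, $1/(n-1)$ and $1/(n+d-1)$, relative to the same unit; there is no pair $(v,\alpha)$ making both ends simultaneously $\alpha$-pure relative to $v$ unless $\lambda=1$. (For $n=3$, $d=2$: one would need $v^{2\alpha}=\lambda$ and $v^{4\alpha}=\lambda$, forcing $\lambda^2=\lambda$.) Since Theorem \ref{theo : purity implies formality} produces a formality zig-zag natural in the category $\cat{Ch}_*(\kk)^{\alpha}$ for one \emph{fixed} $\alpha$, it cannot be applied to your square, and no bookkeeping or reindexing recovers the gcd from a one-parameter action. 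Your fallback --- killing the off-diagonal obstructions by equivariance --- fails for the same reason: any component of a deformation-type group mapping the weight-$w$ part of $H_*(E_n)$ to the weight-$w$ part of $H_*(E_{n+d})$ carries eigenvalue $\lambda^{w-w}=1$ under your action, so equivariance says nothing about it, and such components exist well inside the claimed range (e.g.\ $H_{n-1}(E_n)\to H_{n+d-1}(E_{n+d})$, total degree $n+d-1$). The same mismatch of exponents also invalidates your rational argument.

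What the paper actually does is decouple the two ends using the additivity/box-product theorem: writing $E_{n+d}^{\wedge}\simeq E_n^{\wedge}\boxtimes E_d^{\wedge}$ and the stabilization as $\id\boxtimes i$, it acts on the source by $u^\sharp$ and on the target by $u^\sharp\boxtimes\mu^\sharp$, where $u,\mu\in\kk^\times$ are chosen \emph{independently} and $\mu^\sharp$ is a lift of $\mu$ to an automorphism of $E_d^\wedge$ (Corollary \ref{cor:spheres}); functoriality of $\boxtimes$ gives the commuting square. Then $u^\sharp$ acts by $u^k$ in degree $k(n-1)$ and $u^\sharp\boxtimes\mu^\sharp$ acts by $(u\mu)^k$ in degree $k(n+d-1)$, so choosing $v$ of order $p-1$ (of infinite order over $\QQ$) and setting $u=v^{(n-1)/\Delta}$, $\mu=v^{d/\Delta}$ --- the integrality of these exponents being exactly where $\Delta$ enters --- makes \emph{both} ends $1/\Delta$-pure relative to the single unit $v$. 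Theorem \ref{theo : purity implies formality}, applied to the arrow inside one category $\cat{Ch}_*(\kk)^{1/\Delta}$, then gives $\Delta(p-2)$-formality, and full formality over $\QQ$. Note that this two-parameter action also has nontrivial eigenvalue $v^{r/\Delta}$ on the equal-weight components of degree shift $r$ that your action cannot see. (The case $n=1$ needs a separate remark, as in the paper: there one takes $u=1$, $u^\sharp=\id$.)
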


The characteristic zero case of this theorem was established in \cite{lambrechtsformality} with a weaker range and, with the range as above, in \cite{turchinrelative} and \cite{fresseintrinsic}. Our proof in the rational case parallels the positive characteristic one and is fundamentally different from the proofs in these papers.

\medskip
Before we explain the main ideas in this paper, let us mention two other related results that we prove. The first one is a new proof of the formality of $E_n$ as a Hopf cooperad with rational coefficients. Denoting by $\Omega^*_{poly}$ Sullivan's functor of polynomial differential forms on spaces, we obtain:

\begin{thmc}
For all $n$, the $\infty$-Hopf cooperad $\Omega_{poly}^*(E_n)$ is quasi-isomorphic to $H^*(E_n,\QQ)$ as an $\infty$-Hopf cooperad.
\end{thmc}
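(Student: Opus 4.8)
The plan is to prove this by a purity argument, using the extra grading furnished by the Grothendieck--Teichm\"uller/Galois action constructed above. The governing principle is the purity-implies-formality criterion of \cite{CiriciHorel}: an object of an appropriate homotopy category over $\QQ$ that carries a weight grading whose cohomology is \emph{pure} --- meaning $H^i$ is concentrated in a single weight, and this weight is an injective function of $i$ --- is formal. The whole proof thus reduces to two tasks: first, to promote $\Omega^*_{poly}(E_n)$ to a weight-graded $\infty$-Hopf cooperad over $\QQ$; and second, to check that its cohomology is pure with weights injective in the cohomological degree.

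For the first task, the essential datum is a $\mathbb{G}_m$-action, which is the same thing as a weight grading. In parallel with the positive characteristic argument, I would extract it from the Galois symmetry of the profinite (or $\ell$-adic) homotopy type of $E_n$: the configuration spaces defining $E_n$ are defined over $\ZZ$, so the resulting cooperad carries a Galois action, and the weights read off from the cyclotomic character descend, after rationalization, to a weight grading on the $\infty$-Hopf cooperad $\Omega^*_{poly}(E_n)$. (Classically this grading is incarnated by the dilation torus $\mathbb{G}_m$ inside the graded Grothendieck--Teichm\"uller group $\mathrm{GRT}$ acting on the rational homotopy type; the point of the present approach is that it is produced without a Drinfeld associator, exactly as in characteristic $p$.) For the second task I would compute directly on the well-understood cohomology: each $H^*(E_n(k),\QQ)$ is generated as a graded-commutative algebra by the classes $\omega_{ij}$ of degree $n-1$ modulo the Arnold/Cohen and symmetry relations, and on each generator the torus acts through a single nonzero character. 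Hence $H^i(E_n(k),\QQ)$ is pure of weight proportional to $i$, so the weight is injective in $i$, uniformly in the arity $k$ and compatibly with the cooperad structure maps.

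Feeding this into the criterion produces the desired zig-zag of $\infty$-Hopf cooperads between $\Omega^*_{poly}(E_n)$ and $H^*(E_n,\QQ)$. The main obstacle is the coherence of this last step: the splitting must be carried out not for a single commutative dg-algebra but for the entire cooperad together with all of its homotopy-coherent structure maps, so one needs the weight decomposition to be natural enough to take place in the $\infty$-category of weight-graded $\infty$-Hopf cooperads. This is precisely what working at that level buys: once the $\mathbb{G}_m$-action is constructed there, the differential and all higher homotopies preserve weight, the degreewise single-weight cohomology splits off canonically, and the comparison map is obtained by selecting weight-homogeneous representatives, just as in the case of a single algebra.
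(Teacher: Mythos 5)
Your overall strategy --- produce a ``pure'' symmetry of the rational object, feed it into the purity-implies-formality machine of \cite{CiriciHorel}, and use the naturality of that machine to handle the homotopy-coherent cooperad structure --- is exactly the shape of the paper's proof, and your purity computation on $H^*(E_n(k),\QQ)$ (generators $\omega_{ij}$ in degree $n-1$, weight injective in degree) is the same as the paper's, which reduces to arity $2$ because the Poisson cooperad is generated there. The coherence worry at the end is also resolved the way you hope: the criterion is stated as formality of a lax monoidal forgetful functor, with a \emph{natural} zig-zag, so it applies verbatim to the diagram $\mathsf{Tree}\to\cat{CDGA}$.

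However, there is a genuine gap in your construction of the weight structure, which is the heart of the matter. You propose to extract it from the Galois action on the profinite ($\ell$-adic) homotopy type of the configuration spaces and to let the cyclotomic weights ``descend, after rationalization, to a weight grading'' on $\Omega^*_{poly}(E_n)$. This step does not exist as stated: the absolute Galois group acts on the \emph{profinite} completion of $E_n$ (via the comparison with the \'etale homotopy type), and there is no functorial way to rationalize such an action into a $\mathbb{G}_m$-action, or even a single automorphism, of the \emph{rational} completion --- the two completions only communicate through the adelic fracture square, and transferring an automorphism across it is precisely the kind of statement one cannot get for free. (Classically this transfer is what a Drinfeld associator, i.e.\ the identification $\GT_{\QQ}\simeq\mathrm{GRT}$, accomplishes, so your parenthetical claim that the approach avoids associators is where the argument breaks.) The paper instead constructs the symmetry directly on the $\QQ$-completion: Drinfeld's $\GT_{\QQ}$-action on the rationally completed parenthesized braid operad gives an action on $(E_2)^{\wedge}_{\QQ}$, the completed Boardman--Vogt box product with the trivial action on $(E_{n-2})^{\wedge}_{\QQ}$ propagates it to $(E_n)^{\wedge}_{\QQ}$, and Drinfeld's theorem that the cyclotomic character $\GT_{\QQ}\to\QQ^{\times}$ is surjective supplies a lift $u^{\sharp}$ of a unit $u$ of infinite order; purity of $u^{\sharp}$ then follows from the arity-$2$ computation. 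One further point you elide: the action lives on the completion, so one must also know that $\Omega^*_{poly}(E_n)\to\Omega^*_{poly}(E_n^{\wedge})$ is a quasi-isomorphism (the paper's comparison of the homology of $E_n$ with that of its completion). Your \'etale route could in principle be repaired --- prove formality of the $\QQ_\ell$-cochain cooperad from the Galois/Frobenius weights and then invoke a descent-of-formality theorem from $\QQ_\ell$ to $\QQ$ --- but that descent is a substantial extra ingredient which your proposal neither states nor proves.
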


An $\infty$-Hopf cooperad is a homotopically coherent version of a strict Hopf cooperad, i.e. a cooperad in commutative differential graded algebras. Their appearance in this theorem stems from the fact that $\Omega^*_{poly}$ is not a lax monoidal functor and therefore $\Omega^*_{poly}(E_n)$ is not a strict Hopf cooperad.

Finally, we also deduce partial formality for configuration spaces of points in $\RR^n$ with coefficients in $\FF_p$.

\begin{thmd}
For $d\geq 3$, the dg-algebra $C^*(\mathrm{Conf}_n(\RR^d),\FF_p)$ is $(d-1)(p-2)$-formal.
\end{thmd}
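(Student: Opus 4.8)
The plan is to realize Theorem D as the arity-$n$ shadow of the arithmetic purity underlying the formality results, applied to configuration spaces through their algebraic structure. First I would use the homotopy equivalence $\mathrm{Conf}_n(\RR^d) \simeq E_d(n)$ to identify the dg-algebra in question with the cup-product cochains $C^*(E_d(n), \FF_p)$, equipped with their natural $E_\infty$-refinement. The hypothesis $d \geq 3$ enters to guarantee that $\mathrm{Conf}_n(\RR^d)$ is simply connected of finite type, so that its $\FF_p$-cochains are governed by the homotopy theory of $E_\infty$-algebras and $(d-1)(p-2)$-formality of this cochain algebra is both meaningful and accessible; for $d=2$ the space is a $K(\pi,1)$ and a different analysis would be required.

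The substance is a weight-purity mechanism of Cirici-Horel type. The cohomology $H^*(\mathrm{Conf}_n(\RR^d), \FF_p)$ is the Arnold algebra, generated in cohomological degree $d-1$ by the classes $\omega_{ij}$. I would endow the cochain $E_\infty$-algebra with a Frobenius (equivalently, Galois) action; such an action is available either by restricting to arity $n$ the Galois action on the completed operad $E_d$ built in the body of the paper, or directly from the fact that configuration spaces are defined over $\ZZ$. The point is that this action is \emph{pure}: each generator $\omega_{ij}$ sits in a single weight. In the normalization inherited from $\mathrm{Conf}_n(\CC)$ --- where the degree-one generator has weight one --- the passage to dimension $d$ multiplies cohomological degree by $d-1$ while leaving the weight intact, so a class of degree $k(d-1)$ carries weight-exponent $k$; the weight and cohomological gradings thus differ by the factor $d-1$.

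The heart of the proof is then the formality criterion: a simply connected $\FF_p$ $E_\infty$-cochain algebra carrying such a pure action admits a zig-zag of $E_\infty$-maps to its cohomology, inducing isomorphisms in every degree through which the weight grading still separates the cohomological degree. Mod $p$ the weight is recorded by the cyclotomic character valued in $\FF_p^\times$, a group of order dividing $p-1$, so the weight-exponents $0, 1, \ldots, k$ of the degrees $0, d-1, \ldots, k(d-1)$ remain pairwise distinct precisely while $k \leq p-2$, and collide first at $k = p-1$. Purity therefore separates degrees up to and including $(d-1)(p-2)$, and the criterion delivers the asserted $(d-1)(p-2)$-formality.

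The main obstacle I anticipate is to make the mod-$p$ purity precise and to run the criterion multiplicatively in a range. In characteristic zero the identical weight argument yields formality in all degrees; the delicate positive-characteristic input is the normalization above --- that the weight-to-degree ratio is exactly $d-1$ and that the action is semisimple enough mod $p$ that weight separation genuinely holds below degree $(d-1)(p-1)$. One must also ensure the comparison is carried out at the level of $E_\infty$-algebras, not merely of underlying cochain complexes, so that the output is formality of the dg-algebra and not just of its homotopy type as a complex; this is where simple connectivity and the $E_\infty$-structure, rather than the bare operadic formality of Theorem A, are indispensable. It is reassuring that the resulting collision at $k = p-1$, namely at cohomological degree $(d-1)(p-1)$, is exactly the top degree of $H^*(E_d(p),\FF_p)$ and so matches the failure of formality at arity $p$ already flagged after Corollary A.
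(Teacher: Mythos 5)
Your proposal is correct and follows essentially the same route as the paper's proof: the paper picks a unit $u\in\FF_p^\times$ of order $p-1$, lifts it via the action constructed on the completed operad (Proposition \ref{prop : action on homology}, restricted to arity $n$) to an automorphism of $\mathrm{Conf}_n(\RR^d)^\wedge$ acting by $u^k$ in degree $k(d-1)$, and then invokes the purity-implies-formality criterion of \cite[Proposition 7.8]{CiriciHorel}, using that $C^*(\mathrm{Conf}_n(\RR^d)^\wedge,\FF_p)\simeq C^*(\mathrm{Conf}_n(\RR^d),\FF_p)$ by definition of completion. Your weight-separation count, the resulting range $(d-1)(p-2)$, and the identification of the first collision at degree $(d-1)(p-1)$ all match the paper's argument.
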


Two related formality results can be found in \cite[Theorem 3.10]{salvatorenon}. On the one hand, Salvatore shows that the space $\mathrm{Conf}_n(\RR^d)$ is formal with any ring of coefficients if $d\geq n$ (and in fact is even intrinsically formal), on the other hand he shows that $C^*(\mathrm{Conf}_n(\RR^2),\FF_2)$ is not a formal dg-algebra if $n\geq 4$.

\subsection*{Our approach} The first step in our strategy to prove theorems A and B is to endow the $p$-completion of the little $n$-disks operad with a non-trivial action of the pro-$p$ version of the Grothendieck-Teichm\"uller group. This group is denoted $\GT_{\FF_p}$ in this paper. It is equipped with a surjective group homomorphism
\[\chi:\GT_{\FF_p}\to \ZZ_p^\times\]
called the cyclotomic character. The reason for this terminology is that there is a group homomorphism $\mathrm{Gal}(\overline{\QQ}/\QQ)\to \GT_{\FF_p}$ and the map $\chi$ factors the usual $p$-adic cyclotomic character $\mathrm{Gal}(\overline{\QQ}/\QQ)\to \ZZ_p^{\times}$. Our key result is the following.

\begin{thme}
There exists an action of $\GT_{\FF_p}$ on $E_n^{\wedge}$, the $p$-completion of $E_n$, such that the induced map
\[\GT_{\FF_p}\to\pi_0\mathrm{Aut}^h(E_n^{\wedge})\xrightarrow{\textup{restr.}}\pi_0\mathrm{Aut}^h(E_n^{\wedge}(2))\simeq\ZZ_p^{\times}\]
coincides with the cyclotomic character.
\end{thme}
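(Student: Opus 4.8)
The plan is to reduce the whole statement to the case $n=2$, where the action is essentially the definition of $\GT_{\FF_p}$, and then to propagate it to all $n$ by means of the (completed) Boardman--Vogt tensor product together with an additivity equivalence. First I would take as input the action of $\GT_{\FF_p}$ on $E_2^\wedge$: in the pro-$p$ world $\GT_{\FF_p}$ is built precisely as a group of homotopy automorphisms of the completed little $2$-disks operad (equivalently, of the parenthesized braid operad), and by construction its restriction to arity $2$, i.e. to $E_2^\wedge(2)\simeq (S^1)^\wedge_p$, is the cyclotomic character: the element of $\ZZ_p^\times$ by which a class acts on $\pi_1(E_2^\wedge(2))\cong\ZZ_p$ (the class of the loop rotating one point around the other) is by definition $\chi$.

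To build the action for general $n$, I would isolate a single tensor factor, writing $E_n\simeq E_2\otimes E_{n-2}$ via the additivity theorem, and arrange that this equivalence survives $p$-completion, $E_n^\wedge\simeq E_2^\wedge\otimes E_{n-2}^\wedge$, for the variant of the Boardman--Vogt tensor product adapted to completed operads. Since $\otimes$ is functorial in each variable, the action of $\GT_{\FF_p}$ on $E_2^\wedge$ then induces an action on $E_2^\wedge\otimes E_{n-2}^\wedge\simeq E_n^\wedge$ acting only through the first factor. This is the desired action, and it is automatically compatible with the stabilization maps, since these correspond to the unit $\mathbb{1}\to E_{n-2}$ in the inert factor.

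For the arity-$2$ computation, the key geometric input is that the tensor product realizes a join on binary operations, $(\mathcal O\otimes\mathcal P)(2)\simeq \mathcal O(2)\ast\mathcal P(2)$, which for little disks recovers $E_n(2)\simeq E_2(2)\ast E_{n-2}(2)\simeq S^1\ast S^{n-3}\simeq S^{n-1}$. Under the resulting suspension/join isomorphism on reduced homology, the top class of $\tilde H_{n-1}(E_n(2);\ZZ_p)$ is the external product of the generators of $\tilde H_1(S^1)$ and $\tilde H_{n-3}(S^{n-3})$. A class $g\in\GT_{\FF_p}$ acts by $\chi(g)$ on the first generator (by the $n=2$ case) and by the identity on the second (it is inert there), hence by $\chi(g)$ on the top class, since the degree of a join of maps is the product of the degrees. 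Thus $g$ acts on $E_n^\wedge(2)\simeq(S^{n-1})^\wedge_p$ by a self-map of degree $\chi(g)$, which is exactly its image in $\pi_0\mathrm{Aut}^h(E_n^\wedge(2))\simeq\ZZ_p^\times$; this identifies the composite with $\chi$. I note that restricting along the equatorial inclusion $E_2\to E_n$ is \emph{not} enough to read off the degree, since $S^1\hookrightarrow S^{n-1}$ is nullhomotopic for $n\geq 3$, which is why the join decomposition rather than naive naturality is needed.

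The main obstacle I anticipate is not the final degree computation, which is short once the framework is in place, but the homotopy-theoretic bookkeeping of the tensor product itself: one must construct a Boardman--Vogt tensor product that is homotopy-invariant and commutes with $p$-completion in the appropriate sense, and prove the completed additivity equivalence $E_2^\wedge\otimes E_{n-2}^\wedge\simeq E_n^\wedge$, since the classical Dunn--Lurie additivity is a statement about uncompleted operads and completion does not obviously commute with $\otimes$. Ensuring that the $\GT_{\FF_p}$-action assembles into a genuine homotopy-coherent group action on $E_n^\wedge$, rather than merely an action up to homotopy in each arity, is the other delicate point, and is what justifies passing to $\pi_0\mathrm{Aut}^h$ at the end.
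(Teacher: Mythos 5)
Your proposal is correct and follows essentially the same route as the paper: Drinfeld's action on $E_2^{\wedge}$ via $\PaB^{\wedge}_{\FF_p}$, propagation to all $n$ by tensoring with the trivial action on the complementary factor through a completed Boardman--Vogt product and additivity, and the arity-$2$ identification via the join decomposition $E_n(2)\simeq E_2(2)\ast E_{n-2}(2)$ with the degree-multiplicativity of joins. The only (model-level) difference is that the paper implements the tensor product on configuration categories $\config(\RR^n)$ rather than on operads directly, transferring automorphisms back and forth via the equivalence $\aut^h((E_n)^{\wedge})\simeq\aut^h(\config(\RR^n)^{\wedge})$; the technical obstacles you flag (homotopy invariance of $\boxtimes$, its compatibility with completion, and the completed additivity equivalence) are exactly what the paper's Sections 5--7 establish.
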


By $\aut^h$ we mean the space of homotopy automorphisms. We also have a rational version of this theorem. In that case, one has to use the pro-unipotent Grothendieck-Teichm\"uller group, denoted $\GT_{\QQ}$. This group was initially introduced by Drinfeld in \cite{drinfeldquasi} and can be viewed as the group of automorphisms of the rationalization of the little $2$-disks operad (see Definition \ref{defn : GT} for a precise definition). As in the pro-$p$-case, there is a cyclotomic character map $\GT_{\QQ}\to\QQ^{\times}$.

\begin{thme'}
There exists an action of $\GT_{\QQ}$ on $E_n^{\wedge}$, the $\QQ$-completion of $E_n$, such that the induced map
\[\GT_{\QQ}\to\pi_0\mathrm{Aut}^h(E_n^{\wedge})\xrightarrow{\textup{restr.}}\pi_0\mathrm{Aut}^h(E_n^{\wedge}(2))\simeq\QQ^{\times}\]
coincides with the cyclotomic character.
\end{thme'}

Now, to prove theorem A, we use the surjectivity of the cyclotomic character and of the restriction map $\ZZ_p^\times\to\FF_p^\times$. For any unit $u$ of $\FF_p$, we obtain an automorphism of $C_*(E_n,\FF_p)$ that acts by multiplication by $u^k$ in homological degree $(n-1)k$. If we pick $u$ of maximal order, we can use this automorphism in order to functorially split the Postnikov tower in a range and get a proof of Theorem A. In the rational case, we can lift a unit of $\QQ$ of infinite order and get a full splitting of the Postnikov tower. The other theorems indicated above are also proved by exploiting this action of the Grothendieck-Teichm\"uller group. 

Let us mention that, although Theorem E and E' look extremely similar, the formality results that we get in the case of $\FF_p$-coefficients are weaker than for $\QQ$-coefficients. The reason is that the field $\FF_p$ does not contain units of inifinite order contrary to the field $\QQ$. Note however that this difference is not simply a feature of the method used in this paper but is intrinsic to the situation. Indeed, as we have said above, the formality result that we prove with $\FF_p$-coefficients is sharp.

The general method to deduce formality from pure actions is an elaboration of an idea that seems to date back to Sullivan, for commutative differential graded algebras in characteristic zero. This was promoted to a statement about operads by Guill\'{e}n-Navarro-Pascual-Roig \cite{GNPR}, and used by Petersen \cite{Petersen} to give a proof of rational formality for $E_2$ using its $\GT_{\QQ}$ action. Our proof follows this strategy, adapted to the positive characteristic using work of Joana Cirici and the second author \cite{CiriciHorel}.

In order to construct the said action on the completion of $E_n$, we use a version of the Dunn-Lurie additivity theorem, established by Michael Weiss and the first author in \cite{BoavidaWeissBV}. These theorems express $E_{n+m}$ as a suitable tensor product of $E_n$ and $E_m$. As in \emph{loc. cit}, we make use of the notion of configuration category of $\RR^n$, an object closely related to the little disks operad $E_n$. Our approach to prove theorems $E$ and $E^\prime$ is to use a completed version of the additivity theorem, together with the action of Grothendieck-Teichm\"uller group on the completion of $E_2$ defined by Drinfeld. The desired action of $\textup{GT}$ on the completion of $E_n$ is then obtained by tensoring the completion of $E_2$ (with Drinfeld's action) with the completion of $E_{n-2}$ (with the trivial action).

\medskip
The characteristic zero formality theorem has had several important applications. One is Tamarkin's proof of Kontsevich's formality theorem \cite{kontsevichoperads, hinichtamarkin}. This states that the Hochschild cochains of the ring of smooth functions on a manifolds is quasi-isomorphic to its cohomology (the algebra of polyvector fields over the manifold) as algebras over the little $2$-disks operad. We hope that our formality theorem can be used to prove a positive characteristic version of this theorem and its higher dimensional versions as in \cite{calaquetriviality}.

Another beautiful application of the (relative) formality of the higher dimensional little disks operads is the proof of the rational collapse of the Vassiliev spectral sequence due to Lambrechts, Turchin and Voli\'c \cite{lambrechtslongknots}. This spectral sequence computes the homology of the space of long knots in $\RR^{1+d}$, that is, the space of smooth embeddings $\RR\to\RR^{1+d}$ that coincide with the standard inclusion outside of a compact subspace. In higher dimensions, Arone and Turchin \cite{aroneturchin} also used the formality of the map $E_n\to E_{n+d}$ in order to produce graph complexes computing the rational homology of the space of embeddings $\RR^n\to\RR^{n+d}$ that are standard outside of a compact subspace. These results have been extended by Fresse-Turchin-Willwacher \cite{FTW}. In \cite{Galoisknots}, we explore some consequences of Theorem B to embedding calculus and in particular to the Goodwillie-Weiss-Vassiliev spectral sequence, beyond the rational context.

\subsection*{Acknowledgements} We wish to thank Joana Cirici, Pascal Lambrechts, Thomas Nikolaus, Dan Petersen, Alex Suciu, Victor Turchin, and Peter Teichner for helpful conversations.

\section{Configuration categories}\label{sec:confcats}
We denote by $\S$ the category of simplicial sets. Given $k \geq 0$, we write $\uli k$ for $\{1, \dots , k\}$. (By convention, $\uli 0$ is the empty set.)

\begin{defn}
Let $M$ be a topological manifold. The \emph{configuration category} of $M$ is a topological category $\config(M)$. Its space of objects is the disjoint union of the spaces of ordered configurations of $k$ points in $M$ for $k \geq 0$. A morphism from $x : \uli k \hookrightarrow M$ to $y : \uli \ell \hookrightarrow M$ consists of a map of finite sets $f : \uli k \to \uli \ell$ and a Moore path $h : [0,a] \to M^{\times k}$ satisfying:
\begin{itemize}
\item $h(0) = x$, $h(a) = y \circ f$, 
\item if two components of $h(T)$ agree for some $T$, then they agree for all $t \geq T$.
\end{itemize}
\end{defn}

This definition goes back to Ricardo Andrade's thesis \cite{Andrade}, if not earlier. Different models for configuration categories were given in \cite{BoavidaWeissLong}, where they were studied in connection with spaces of smooth embeddings. In \emph{loc. cit.}, a  close relationship between the little disks operad $E_n$ and the configuration category of Euclidean space $\RR^n$ was explained. We will review this below.

\medskip
Let $\fin$ be the skeleton of the category of finite sets with objects $\uli k$, for $k \geq 0$.  There is a forgetful map $\config(M) \to \fin$. Taking the usual nerve $N$, we obtain a map of simplicial spaces 
\[
N \config(M) \to N \fin
\]
which is a fibration between Segal spaces (i.e. a fibration in the complete Segal space model structure).

We will often take an alternative but equivalent point of view on configuration categories, which we now describe. Recall that for a simplicial set $A$ the category of simplices of $A$, denoted $\simp(A)$, is the category whose objects are pairs $([n], x)$ with $n \geq 0$ and $x \in A$; the set of morphisms $([n], x) \to ([m],y)$ is the set of morphisms $\theta : [n] \to [m]$ in $\Delta$ such that $x = \theta^*y$. We will use this construction when $A = NC$ for a category $C$ and we simply write $\simp(C)$ for $\simp(NC)$. 

It is easy to see that a simplicial space over $A$ is the same data as a contravariant functor from $\simp(A)$ to $\S$. Given such a simplicial map $p : X \to A$, a functor is produced by the assigning to $x \in \simp(A)$ the fiber of $p$ at $x$. This allows us to think of the configuration category $\config(M)$ with its reference map to $\fin$ as a functor
\[
\simp(\fin)^\op \to \S \;
\]
which we still denote by $\config(M)$.

\subsection{Relation to little disks operads}\label{sec:littledisks}

Recall the category $\tree$ of non-empty, finite rooted trees of \cite{MoerdijkWeiss} (Moerdijk-Weiss write $\Omega$ for $\tree$). Important objects in $\tree$ are the tree $C_n$ with a single vertex and $n+1$ edges ($n$ of which are called leaves), for $n \geq 0$, and the tree $\eta$ with a single edge and no vertices. Any other object in $\tree$ may be written as an union of objects of the form $C_n$, intersecting along $\eta$'s. 

Following Cisinski-Moerdijk (see \cite[Definition 5.4]{cisinskidendroidal}), we make the following definition.
\begin{defn}
A (monochromatic) $\infty$-operad in spaces is a functor $$X : \tree^{\op} \to \S$$ satisfying the following two conditions:
\begin{enumerate}
\item the space $X_{\eta}$ is contractible,
\item the Segal map
\[
X_T \to \prod_{v \in T} X_{C_{|v|}}
\]
is a weak equivalence. Here, the product runs over the vertices $v \in T$ and $C_{|v|}$ is the corolla whose set of leaves is the set of inputs at $v$.

\end{enumerate}
\end{defn}

We denote by $\cat{OpS}$ the category of operads in spaces. Let us briefly recall how operads are related to contravariant functors on $\tree$. To any tree $T$, one can associate an operad $F(T)$. If $T=C_n$, $F(T)$ is simply the free operad on an operation of arity $n$. If $T$ is a arbitrary tree, $F(T)$ is defined as the coproduct (in the category of operads in sets)
\[F(T)=\sqcup_{v\in T}F(C_{|v|}) \; .\]
The assignment $T\mapsto F(T)$ is functorial. The dendroidal nerve functor
\[N_d:\cat{OpS}\to\fun(\tree^\op,\S)\]
sends an operad $P$ to the functor $N_d(P)$ defined by the formula
\[N_d(P)_T=\map(F(T),P).\]
Clearly, $N_d(P)$ satisfies conditions (1) and (2) above. Moreover, it sends weak equivalences of operads to objectwise weak equivalences of functors, and so it induces a map on homotopy function complexes
\begin{equation}\label{eq:Nd}
\map^h(P,Q) \to \map^h(N_d P, N_d Q)
\end{equation}
for any two operads $P$ and $Q$.

\begin{thm}[Cisinski--Moerdijk  \cite{cisinskidendroidal}]\label{thm: cm}
The functor $N_d$ induces an equivalence between the $\infty$-category of operads in spaces and the $\infty$-category of $\infty$-operads.
\end{thm}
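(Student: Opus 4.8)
The plan is to establish the Cisinski--Moerdijk equivalence by interpreting $N_d$ as the comparison functor between two presentations of the same $\infty$-category. The key observation is that $\tree$ is the analog for operads of the simplex category $\Delta$ for categories, and that the condition defining $\infty$-operads (contractibility at $\eta$ together with the Segal condition) is precisely a dendroidal Segal condition. So the target $\infty$-category of $\infty$-operads should be identified with the localization of $\fun(\tree^{\op},\S)$ at the maps that are weak equivalences on the full subcategory of Segal-type objects, or equivalently with a left Bousfield localization of the projective model structure on $\fun(\tree^{\op},\S)$ whose fibrant objects are exactly the $\infty$-operads. First I would recall that $\cat{OpS}$ carries a model structure (the operadic model structure on simplicial operads), and that the dendroidal nerve is known to be the right adjoint of a Quillen pair $\tau_d : \fun(\tree^{\op},\S) \rightleftarrows \cat{OpS} : N_d$, where $\tau_d$ is the operadic realization sending a representable $\tree[T]$ to $F(T)$.

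The core of the argument is then to verify that this Quillen adjunction descends to a Quillen \emph{equivalence} after passing to the appropriate localized model structures. Concretely, I would proceed in the following steps. First, show that for a fibrant operad $P$, the dendroidal nerve $N_d(P)$ is a fibrant object in the Segal-localized model structure on $\fun(\tree^{\op},\S)$; this amounts exactly to checking conditions (1) and (2), which is immediate from the formula $N_d(P)_T = \map(F(T),P)$ together with the fact that $F(T) = \sqcup_{v} F(C_{|v|})$ expresses any $F(T)$ as an iterated pushout (along the $\eta$'s) of free operads, so that $\map(F(T),P)$ decomposes as the homotopy product $\prod_{v} \map(F(C_{|v|}),P)$ whenever $P$ is fibrant. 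Second, establish that the derived unit and counit of the adjunction are equivalences. For the counit $\tau_d N_d(P) \to P$ one exploits that $P$ is determined up to homotopy by its spaces of $n$-ary operations, which $N_d(P)$ records via the corollas $C_n$, and that $\tau_d$ reconstructs the free-operad data correctly on the subcategory of representables. For the derived unit $X \to N_d \tau_d(X)$ (on a Segal-fibrant $X$) one reduces, via the Segal condition and the generation of $\tree$ under the corollas $C_n$ glued along $\eta$, to the case of the representables $\tree[C_n]$, where the statement is the free operad computation $N_d(F(C_n)) \simeq \tree[C_n]$.

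The main obstacle, and the step that requires genuine care rather than formal nonsense, is the control of the localized model structure on the presheaf category and the verification that its fibrant objects are precisely the $\infty$-operads in the sense of the definition. In particular, one must know that the Segal condition can be encoded by a set of maps whose left Bousfield localization exists and has the expected fibrant objects, and that this localization is compatible with $\tau_d$ (i.e. $\tau_d$ sends the localizing maps to weak equivalences in $\cat{OpS}$). Rather than reproving all of this by hand, the cleanest route is to cite the dendroidal Segal space / dendroidal complete Segal space theory of Cisinski--Moerdijk directly, in which these model structures and their comparison are constructed, and simply note that our $N_d$ agrees with their homotopy coherent nerve up to the standard identification; the theorem is then exactly their main comparison result. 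I would therefore present the proof as a reduction to \cite{cisinskidendroidal}, spelling out only the identification of the functor $N_d$ defined here (via $\map(F(T),-)$) with the nerve used there, and the translation of conditions (1) and (2) into their Segal fibrancy conditions.
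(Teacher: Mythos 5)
The paper offers no proof of this theorem: it is quoted as an external result of Cisinski--Moerdijk \cite{cisinskidendroidal}, so there is no internal argument to compare against. Your proposal, after sketching the expected Quillen-equivalence strategy (nerve as right Quillen functor into a Segal-localized projective structure, derived unit/counit checks reduced to corollas), explicitly bottoms out in a reduction to that same citation, so it is in substance the same approach as the paper's; the only point where your wording overstates is the claim that the statement is ``exactly their main comparison result'' --- Cisinski--Moerdijk's comparison is formulated for colored operads and \emph{complete} dendroidal Segal spaces, whereas the $\infty$-operads here are monochromatic and reduced ($X_\eta \simeq *$, with no completeness condition), so a standard but not purely notational translation between the reduced and complete settings is still part of the deduction.
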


In other words, the map (\ref{eq:Nd}) is a weak equivalence for any two operads $P$ and $Q$, and $N_d$ is homotopically essentially surjective. In view of this theorem, we will often not make a distinction between an operad $P$ and its dendroidal nerve $N_d P$.

\medskip
We denote by $\Op_{\infty}\S$ the category of $\infty$-operads. The link between configuration categories and $\infty$-operads goes via an inclusion $j : \simp(\fin) \to \tree$ given by regarding a string of maps of finite sets as a levelled tree (adding a root, and capping off each leaf with a vertex). This induces a functor
\[
j^* : \fun(\tree^\op, \S) \to \fun(\simp(\fin)^\op, \S)
\]
by restriction. For the little $n$-disks operad $E_n$, it is not difficult to identify up to weak equivalence the objects $j^* E_n$ and $\config(\RR^n)$. Less trivial is the observation that $E_n$ can be reconstructed from $\config(\RR^n)$:

\begin{thm}[{\cite[Thm 7.5]{BoavidaWeissLong}}]\label{thm: from simpfin to omega}
Let $\cat{R}$ denote the subcategory of the category of $\infty$-operads $X$ such that $X_{C_0}$ and $X_{C_1}$ are weakly contractible. Let $K$ denote the homotopy left adjoint to the inclusion of $\cat{R}$ in $\Op_{\infty}\S$. Then the natural maps of $\infty$-operads
\[
E_n \to K(E_n) \gets  K(j_! \config(\RR^n))
\]
are weak equivalences, where $j_!$ denotes the homotopy left adjoint to $j^*$.
\end{thm}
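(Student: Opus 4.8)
The plan is to treat the two maps separately, reducing the statement to a reconstruction principle for reduced $\infty$-operads. For the left-hand map this is immediate: since $E_n(0)$ and $E_n(1)$ are weakly contractible, the dendroidal nerve $N_d E_n$ lies in $\cat R$. As $K$ is the homotopy reflection onto $\cat R$, its unit is a weak equivalence on any object already in the essential image, so $E_n \to K(E_n)$ is a weak equivalence.

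For the right-hand map, the idea is to exhibit it as $K$ applied to the counit of the adjunction $j_! \dashv j^*$. Using the identification $j^* E_n \simeq \config(\RR^n)$ recalled above, we get $j_! \config(\RR^n) \simeq j_! j^* E_n$, and the right-hand map is identified with $K(\varepsilon) \colon K(j_! j^* E_n) \to K(E_n)$, induced by the counit $\varepsilon \colon j_! j^* E_n \to E_n$. Here we view $K$ as the reflection onto $\cat R$ inside presheaves on $\tree$, composing the operadic localization with the reduction, so that $K j_!$ is the homotopy left adjoint of the restriction $j^* \colon \cat R \to \fun(\simp(\fin)^{\op}, \S)$. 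It then suffices to prove the reconstruction statement: for every $X \in \cat R$ the counit $K(\varepsilon) \colon K(j_! j^* X) \to X$ is a weak equivalence; equivalently, the restriction $j^* \colon \cat R \to \fun(\simp(\fin)^{\op}, \S)$ is homotopically fully faithful. Applying this to $X = N_d E_n$ finishes the argument.

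To prove the reconstruction statement I would argue on corollas via the Segal condition. The inclusion $j$ sends the one-morphism string $[\uli m \to \uli 1]$ to the corolla $C_m$ with its leaves capped off by nullary vertices and its root extended; the Segal decomposition then reads $X_{j[\uli m \to \uli 1]} \simeq X_{C_m} \times (X_{C_0})^{m} \times (\text{factors } X_{C_1})$, and for $X \in \cat R$ the contractible factors $X_{C_0} \simeq X_{C_1} \simeq *$ collapse, giving $X_{j[\uli m \to \uli 1]} \simeq X_{C_m}$. Thus $j^* X$ already determines every corolla space $X_{C_m}$, and by the Segal condition it determines $X$ on all of $\tree$. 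The functor $K j_!$ reassembles exactly this data: $j_!$ freely generates a dendroidal object from the values on levelled trees, and the localization $K$ re-imposes the Segal and reducedness conditions, inverting $\varepsilon$. Concretely I would check that $K(\varepsilon)$ is an equivalence corolla-by-corolla and then invoke the Segal condition to pass to arbitrary trees.

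The main obstacle is the density input in this last step: one must show that the levelled trees in the image of $j$ generate all of $\tree$ once the reducedness conditions $X_{C_0} \simeq X_{C_1} \simeq *$ are enforced by $K$. Every tree, including the non-levelled ones and the bare corollas, must be recoverable from capped levelled trees through Segal colimits, with the capping vertices rendered invisible by reducedness. Making this generation statement precise, rather than the formal adjunction bookkeeping, is where the real work lies.
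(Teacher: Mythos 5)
Your formal reductions are correct, and they match how the paper itself frames the statement: $E_n$ lies in $\cat{R}$ because $E_n(0)$ and $E_n(1)$ are weakly contractible, so the unit $E_n \to K(E_n)$ is an equivalence; and via the identification $j^*E_n \simeq \config(\RR^n)$ the right-hand map becomes $K$ applied to the derived counit $j_!j^*E_n \to E_n$, which is exactly the reformulation recorded in the paper immediately after the theorem. But there is a genuine gap, and you have located it yourself: the ``reconstruction statement'' --- that the counit $K(j_!j^*X) \to X$ is an equivalence for $X \in \cat{R}$, equivalently that $j^*$ is homotopically fully faithful on $\cat{R}$ --- is not proved in your proposal, and it is precisely the mathematical content of the theorem. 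Note that the paper does not prove this statement either; it is imported wholesale from \cite[Thm 7.5]{BoavidaWeissLong}. So what your proposal accomplishes is a reduction of the theorem to its own hardest part.

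The corolla-by-corolla sketch does not close this gap, for two reasons. First, recovering each value $X_{C_m}$ up to weak equivalence from $j^*X$ (by letting reducedness collapse the capping factors) is much weaker than showing that the specific map $K(j_!j^*X) \to X$ is an equivalence: to check the latter ``corolla-by-corolla'' you would need to compute $K(j_!j^*X)$ at corollas, but neither the derived left Kan extension $j_!$ nor the reflection $K$ admits a levelwise computation --- $K$ imposes the Segal and reducedness conditions by localization, which changes values at all trees simultaneously, so there is no objectwise formula to compare against $X$. Second, full faithfulness is a statement about mapping spaces: one must show that a map $j^*X \to j^*Y$ extends essentially uniquely over all of $\tree$, compatibly with the inner face maps of non-levelled trees and all higher coherences; this data is not visible in the image of $j$, and recovering it is exactly the ``density'' input you defer at the end. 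In Boavida--Weiss this step is a genuinely technical argument (an induction over trees using reducedness in an essential way), not formal adjunction bookkeeping, so the proposal as written is a correct setup plus an unproven core rather than a proof.
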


Equivalently, this theorem says that the derived counit map
\[
j_! j^* E_n \to E_n
\]
is a weak equivalence in a model structure on the category of dendroidal spaces whose fibrant objects are those dendroidal spaces that belong to $\cat{R}$. 

\begin{cor}\label{cor:aut} The map
\[
j^* : \aut^h(E_n) \to \aut^h(\config(\RR^n))
\] is a weak equivalence.
\end{cor}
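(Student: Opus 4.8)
The goal is to prove Corollary~\ref{cor:aut}: that the restriction functor $j^*$ induces a weak equivalence on the spaces of homotopy automorphisms, $\aut^h(E_n)\to\aut^h(\config(\RR^n))$. The plan is to deduce this from Theorem~\ref{thm: from simpfin to omega}, which exhibits $E_n$ as the homotopy image of $\config(\RR^n)$ under the composite $K\circ j_!$, and to package the argument in terms of a localization of model (or $\infty$-) categories.

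First I would set up the correct homotopical framework. Let $\mathcal{D}=\fun(\tree^\op,\S)$ be the category of dendroidal spaces with its $\infty$-operad model structure, and let $\mathcal{D}'=\fun(\simp(\fin)^\op,\S)$ be the category in which $\config(\RR^n)$ lives. The functor $j^*$ has a homotopy left adjoint $j_!$, and Theorem~\ref{thm: from simpfin to omega} (in its equivalent counit formulation) says that the derived counit $j_!j^*E_n\to E_n$ is a weak equivalence in a model structure on $\mathcal{D}$ whose fibrant objects are exactly the reduced $\infty$-operads $\cat{R}$ (those $X$ with $X_{C_0}$, $X_{C_1}$ contractible). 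Since $E_n$ is such a reduced operad, this identifies $E_n$ with an object in the essential image of $j_!$ up to the reflection $K$. The heart of the matter is therefore to show that $j^*$ restricts to a \emph{fully faithful} embedding on the relevant subcategories, so that it induces an equivalence on endomorphism (hence automorphism) spaces.

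The key step is the following adjunction/fully-faithfulness computation. For any two objects $X,Y$ in the localized category $\cat{R}$, one has a chain of weak equivalences
\begin{equation*}
\map^h(j^*X, j^*Y) \simeq \map^h(j_! j^* X, Y) \simeq \map^h(X, Y),
\end{equation*}
where the first equivalence is the derived adjunction $(j_!\dashv j^*)$ and the second is induced by the derived counit $j_!j^*X\to X$, which is a weak equivalence for $X=E_n$ by Theorem~\ref{thm: from simpfin to omega}. Taking $X=Y=E_n$ shows that $j^*$ induces a weak equivalence on the full homotopy endomorphism spaces
\[
\map^h(E_n,E_n)\xrightarrow{\;\sim\;}\map^h(j^*E_n, j^*E_n)\simeq\map^h(\config(\RR^n),\config(\RR^n)),
\]
using the identification $j^*E_n\simeq\config(\RR^n)$ recalled in Section~\ref{sec:littledisks}. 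Passing to the sub-spaces of invertible components (i.e. those path components mapping to homotopy equivalences) is compatible with $j^*$ since $j^*$ is a functor preserving weak equivalences and the equivalence above is natural; hence it restricts to a weak equivalence $\aut^h(E_n)\to\aut^h(\config(\RR^n))$.

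The main obstacle I anticipate is technical rather than conceptual: namely, making precise the model structure on dendroidal spaces in which the counit is an equivalence and checking that $j_!$ and the reflection $K$ interact correctly with it, so that the displayed adjunction equivalences are genuinely valid at the level of derived mapping spaces. One must verify that $j^*E_n$ and $\config(\RR^n)$ agree as objects of the homotopy category of $\fun(\simp(\fin)^\op,\S)$ (the comparison recalled before Theorem~\ref{thm: from simpfin to omega}), and, crucially, that the counit equivalence of Theorem~\ref{thm: from simpfin to omega} persists after applying $\map^h(-,E_n)$ — this is where reducedness of $E_n$ (so that it is fibrant in the localized structure and $K$ acts as the identity on it) is used in an essential way. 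Once these bookkeeping points are settled, the corollary follows formally.
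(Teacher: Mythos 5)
Your proof is correct and follows essentially the same route as the paper: the paper deduces the corollary directly from the counit formulation of Theorem~\ref{thm: from simpfin to omega} via the derived adjunction $(j_!,j^*)$, exactly the chain of equivalences $\map^h(j^*E_n,j^*E_n)\simeq\map^h(j_!j^*E_n,E_n)\simeq\map^h(E_n,E_n)$ you write, using that $E_n$ is reduced and hence local (this is also how the paper later proves the completed analogue, Proposition~\ref{prop:complete-op-conf}). Your attention to the bookkeeping points (locality of $E_n$, the identification $j^*E_n\simeq\config(\RR^n)$, and restriction to invertible components) matches what the paper implicitly relies on.
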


\begin{rem}
The configuration category of $\RR^n$ is a \emph{part} of the little disks operad $E_n$, from which the little disks operad can be reconstructed. Later on, we use a variant of the Boardman-Vogt tensor product, the $\boxtimes$ product. This is a product between functors on $\simp(\fin)$, but it can be extended formally to a product on $\tree$, i.e. to a product between $\infty$-operads. For the little disks operad, the two products $\boxtimes$ and $\otimes_{BV}$ agree (since additivity holds in both contexts), and conjecturally they agree for all operads satisfying the condition of the theorem above.

Having said that, the reader may wonder why we consider configuration categories, and why we did not just write this paper in operadic language. There are a few reasons, some more circumstantial than others. One is that a theory for $\infty$-operads (and corresponding tensor product) in general cartesian $\infty$-categories (like the $\infty$-category of profinite spaces) is not available. If such framework existed, this paper could probably be written in that language. But that would essentially be a translation. And given that the configuration category of $\RR^n$ is somewhat smaller than the little disks operad, some separate analysis would presumably be needed. Another reason is, of course, the possibility of extending some results to more general manifolds.
\end{rem}

\section{Localizations and completions}

In this section, we review notions of completion and localization from the point of view of pro-spaces. We do so in order to treat the two notions of completion ($p$-completion and rational completion) in a uniform way. Standard references are \cite{ArtinMazur}, \cite{BK} and \cite{Sullivan}.

\medskip
Recall that the pro-category $\ProS$ is the category whose objects are functors $I \to \S$ where $I$ is a varying small cofiltered category. The set of morphisms from $X : I \to \S$ to $Y : J \to \S$ is given by
\[
\limsub{j \in J} \colimsub{i\in I} \Hom_\S(X_i, Y_j) \; .
\]
(An alternative definition: the opposite of $\ProS$ is the category of set-valued functors on $\S$ which are cofiltered limits of representables. This is also the category of set-valued functors on $\S$ that preserve finite limits.)

Any map $X \to Y$ of pro-spaces is isomorphic in $\ProS$ to a natural transformation $X^\prime \to Y^\prime$ (so in particular $X^\prime$ and $Y^\prime$ have the same indexing category). Such a map is called \emph{strict}. A map of pro-spaces is said to be a strict \emph{weak equivalence} if it is isomorphic to a strict map which is an objectwise weak equivalence.

The category $\ProS$ with the notion of strict weak equivalences has derived mapping spaces denoted 
\[
\map^h_{\ProS}(-,-)
\]
or simply $\map^h(-,-)$. Given pro-spaces $X : I \to \S$ and $Y : J \to \S$, there is an identification
\begin{equation}\label{eq:pro-formula}
\map^h_{\ProS}(X,Y) \simeq \holimsub{j \in J} \hocolimsub{i \in I} \map^h_{\S}(X_i, Y_j) \;
\end{equation}
where $\map^h$ on the right-hand side denotes the derived mapping space in $\S$. Finally, any space $K$ may be viewed as a pro-space over the one-point category.

\begin{defn}
Let $C$ be a small full subcategory of $\S$. Let $C^{cofin}$ be the closure of $C$ under finite homotopy limits and retracts. We declare a map of pro-spaces $X\to Y$ to be a \emph{$C$-equivalence} if the induced map
\[
\map^h(Y,K)\to \map^h(X,K)
\]
is a weak equivalence for all $K$ in $C$ (and therefore also for all $K$ in $C^{cofin}$).
\end{defn}

Given a pro-space $X : I \to \S$ and $A$ an abelian group, we will write $H^n(X, A)$ for the continuous cohomology of $X$ with coefficients in $A$, i.e.
\[
H^n(X,A):=\colimsub{i \in I^{\op}} H^n(X_i, A)\cong\pi_0 \map^h(X,K(A,n))
\]
where $H^n(X_i,A)$ denotes the usual singular cohomology.

\medskip
Isaksen shows in \cite{isaksencompletions} that there exists a model structure $\ProS_C$ on $\Pro(\S)$ where the weak equivalences are the $C$-equivalences. As with any localization, the identity functor
\[\ProS_C\to\ProS\]
is a right Quillen functor that induces a full and faithful inclusion between the underlying $\infty$-categories, i.e. induces a weak equivalence on derived mapping spaces. Its essential image consists of  $C$-local pro-spaces, in the following sense. We say that a pro-space $W$ is $C$-\emph{local} if for every $C$-equivalence $X \to Y$, the restriction map
\[
\map^h(Y,W)\to \map^h(X,W)
\]
is a weak equivalence.

\begin{prop}
A pro-space is $C$-local if and only if it is a homotopy limit, computed in $\ProS$, of a cofiltered diagram $I \to C^{cofin} \to \ProS$.
\end{prop}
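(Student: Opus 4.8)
The plan is to prove the two implications separately, with the reverse implication being essentially formal and the forward one requiring an explicit model for the $C$-localization.

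For the ``if'' direction I would first observe that every object $K$ of $C$ is itself $C$-local: this is immediate from the definition, since a map $X\to Y$ is a $C$-equivalence precisely when $\map^h(Y,K)\to\map^h(X,K)$ is a weak equivalence for all such $K$. Next, the class of $C$-local pro-spaces is closed under homotopy limits and retracts computed in $\ProS$: indeed, for a diagram $\{W_\alpha\}$ of $C$-local objects and a $C$-equivalence $X\to Y$, the comparison obtained from the equivalence $\map^h(-,\holimsub{\alpha}W_\alpha)\simeq\holimsub{\alpha}\map^h(-,W_\alpha)$ is a homotopy limit of weak equivalences, hence a weak equivalence; closure under retracts is formal. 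Since $C^{cofin}$ is generated from $C$ under finite homotopy limits and retracts, every object of $C^{cofin}$ is $C$-local, and therefore so is any homotopy limit of a cofiltered diagram $I\to C^{cofin}\to\ProS$. This settles one implication.

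For the converse, the strategy is to produce, for an arbitrary pro-space $X$, a $C$-localization $\eta_X:X\to L_CX$ in which $L_CX$ is manifestly a cofiltered homotopy limit of objects of $C^{cofin}$. I would take $L_CX$ to be the homotopy limit $\holimsub{(K,f)}K$ of the forgetful diagram over the comma $\infty$-category $(X\downarrow C^{cofin})$ of objects of $C^{cofin}$ under $X$, with $\eta_X$ the canonical comparison map. The first point to check is that $(X\downarrow C^{cofin})$ is cofiltered (after passing to a small cofinal subcategory if needed): this uses crucially that $C^{cofin}$ is closed under finite homotopy limits, which makes the comma category finitely complete and hence cofiltered. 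Granting this, $L_CX$ is a cofiltered homotopy limit of $C^{cofin}$-objects, so by the previous paragraph it is $C$-local. It then remains to show that $\eta_X$ is a $C$-equivalence, i.e.\ that for every $K_0\in C^{cofin}$ the map $\map^h(L_CX,K_0)\to\map^h(X,K_0)$ is a weak equivalence. Using the formula~(\ref{eq:pro-formula}) for mapping out of a pro-space into a constant one, the left-hand side rewrites as the filtered colimit $\hocolimsub{(K,f)}\map^h(K,K_0)$ over $(X\downarrow C^{cofin})^\op$, and the claim becomes the assertion that $\map^h(-,K_0)$ is the left homotopy Kan extension, along $C^{cofin}\hookrightarrow\ProS$, of its own restriction, evaluated at $X$. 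I would establish this by combining two facts: that mapping a pro-space into a constant object converts its canonical cofiltered presentation into a filtered colimit, and a cofinality comparison between the indexing category of $X$ and the comma category $(X\downarrow C^{cofin})^\op$. With $\eta_X$ a $C$-equivalence onto a $C$-local target, it is a genuine $C$-localization; so if $W$ is $C$-local then $\eta_W:W\to L_CW$ is a $C$-equivalence between $C$-local objects, hence a weak equivalence, exhibiting $W$ as the required cofiltered homotopy limit $L_CW$ of $C^{cofin}$-objects.

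The main obstacle I anticipate is precisely the verification that $\eta_X$ is a $C$-equivalence: while cofilteredness of the comma category is a clean consequence of the finite-limit closure built into the definition of $C^{cofin}$, identifying $\map^h(L_CX,K_0)$ with $\map^h(X,K_0)$ requires a careful cofinality argument relating $(X\downarrow C^{cofin})$ to the original indexing category of $X$, carried out with full homotopy coherence. Alternatively one may bypass the explicit construction by invoking Isaksen's model structure $\ProS_C$ to obtain the localization abstractly and then analyzing its fibrant replacement, but the same cofinality issue reappears when identifying the fibrant objects.
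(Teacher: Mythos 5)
Your ``if'' direction is correct and coincides with the paper's: objects of $C^{cofin}$ are $C$-local, $C$-locality is closed under homotopy limits and retracts, done. The genuine gap is in the converse, at precisely the step you defer: you never prove that $\eta_X : X \to L_C X$ is a $C$-equivalence, and the route you propose for it cannot work as stated. You want to deduce it from ``a cofinality comparison between the indexing category of $X$ and the comma category $(X\downarrow C^{cofin})^{\op}$,'' but there is no functor between these two categories to which a cofinality argument could apply: the levels $X_i$ of $X$ are arbitrary spaces, not objects of $C^{cofin}$, so $I$ does not map into the comma category, nor conversely. Indeed the difficulty is not about comparing index categories at all: it persists when $X$ is a single space (a constant pro-space), where $I$ is a point and there is nothing to compare. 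In that case the assertion is that the canonical map $\mathrm{hocolim}_{(K,f)}\,\map^h(K,K_0)\to\map^h(X,K_0)$, with the colimit taken over $(X\downarrow C^{cofin})^{\op}$, is an equivalence for every $K_0\in C^{cofin}$ --- i.e.\ that the Artin--Mazur-style comma-category construction genuinely computes $C$-localization. This is the substantive content of the converse and requires a real argument: surjectivity on components is witnessed by $(K_0,g)\mapsto \id_{K_0}$, but identifying two elements of the colimit forces one to pass to homotopy equalizers inside $C^{cofin}$ (this is where closure under finite homotopy limits is actually used), coherently in all homotopical degrees, together with a check that the comma category admits a small cofiltered cofinal subcategory. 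None of this is carried out; the key step is asserted, not proved.

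Note also that the alternative you dismiss in your final sentence is exactly the paper's proof, and it does not suffer from any recurring cofinality issue, because the identification of the fibrant objects of $\ProS_C$ is a citable theorem rather than something to be redone: Christensen and Isaksen prove that every fibrant object of $\ProS_C$ is isomorphic to a levelwise $C$-nilpotent pro-space, and a $C$-nilpotent space, being the limit of a finite tower of principal fibrations with fibers in $C$, lies in $C^{cofin}$. Granting this, if $W$ is $C$-local then its fibrant replacement $W \to W'$ in $\ProS_C$ is a $C$-equivalence between $C$-local objects, hence a strict weak equivalence, and $W'$ is levelwise in $C^{cofin}$; finally, formula (\ref{eq:pro-formula}) shows that any pro-space is the homotopy limit in $\ProS$ of the cofiltered diagram of its constant levels. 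This yields the converse with no explicit construction of a localization functor, which is exactly what your approach is missing.
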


\begin{proof}
By construction, any space in $C^{cofin}$ is $C$-local. The homotopy limit of a diagram of $C$-local pro-spaces is $C$-local. This settles one implication.

Conversely, Christensen and Isaksen prove in \cite[Proposition 4.9]{christensenisaksen} that any fibrant object of $\ProS_C$ is isomorphic to a pro-space which is levelwise $C$-nilpotent. A $C$-nilpotent space is the limit of a finite tower of principal fibrations with fibers in $C$ (see \cite[Definition 3.1]{isaksencompletions}). Such a space is obviously in $C^{cofin}$. Therefore, every $C$-local pro-space is weakly equivalent in $\ProS$ to a pro-space which is levelwise in $C^{cofin}$. By formula (\ref{eq:pro-formula}), every pro-space $X : I \to \S$ is the homotopy limit in $\ProS$ of the cofiltered diagram of constant pro-spaces $X_i$, so the claim follows.
\end{proof}

For an $\infty$-category $C$, we denote by $\Pro_{\infty}(C)$ its pro-category in the $\infty$-categorical sense (as defined in \cite[Definition 5.3.5.1]{lurietopos}). (Briefly, and under smallness hypothesis on $C$ which we suppress, a pro-object in $C$ is a left fibration $U \to C$ where $U$ is a cofiltered $\infty$-category.) We use the same notation for a model category and the $\infty$-category it presents.

\begin{thm}[{\cite[Theorem 7.1.2]{bhh}}]\label{theo : interpretation of proSC}
The $\infty$-category underlying $\ProS_C$ is equivalent to $\Pro_{\infty}(C^{cofin})$, the pro-category of the $\infty$-category $C^{cofin}$.
\end{thm}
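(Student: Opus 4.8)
The plan is to realize the underlying $\infty$-category $\mathcal{E}$ of $\ProS_C$ as the free completion of $C^{cofin}$ under cofiltered limits. Sending a space of $C^{cofin}$ to the corresponding constant pro-space gives a functor $\iota\colon C^{cofin}\to\mathcal{E}$, and since $\mathcal{E}$ — being the underlying $\infty$-category of a model category — admits all small cofiltered limits, the universal property of the pro-completion (dual to that of $\mathrm{Ind}$, see \cite[Section 5.3.5]{lurietopos}) extends $\iota$ to a functor
\[
\Phi\colon\Pro_{\infty}(C^{cofin})\longrightarrow\mathcal{E}
\]
that preserves cofiltered limits and restricts to $\iota$ on $C^{cofin}$. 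The theorem amounts to showing that $\Phi$ is an equivalence. Here I use that $\Pro_{\infty}(C^{cofin})=\mathrm{Ind}\bigl((C^{cofin})^{\op}\bigr)^{\op}$, so that every object is a cofiltered limit $\limsub{i\in I}X_i$ of objects $X_i\in C^{cofin}$, and that mapping spaces are computed by
\[
\map^h_{\Pro_{\infty}(C^{cofin})}\bigl(\limsub{i\in I}X_i,\ \limsub{j\in J}Y_j\bigr)\ \simeq\ \holimsub{j\in J}\hocolimsub{i\in I}\map^h_{C^{cofin}}(X_i,Y_j).
\]

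Essential surjectivity of $\Phi$ is precisely the content of the Proposition above: every $C$-local pro-space, hence every object of $\mathcal{E}$, is a cofiltered homotopy limit of a diagram valued in $C^{cofin}$. For full faithfulness, I would compare mapping spaces directly. Fix $X=\limsub{i\in I}X_i$ and $Y=\limsub{j\in J}Y_j$ with $X_i,Y_j\in C^{cofin}$. Because mapping spaces send the limit in the second variable to a homotopy limit, and because each $Y_j\in C^{cofin}$ is $C$-local, the localization inclusion $\mathcal{E}\hookrightarrow\mathrm{Ho}_{\infty}(\ProS)$ together with formula \eqref{eq:pro-formula} applied to the constant target $Y_j$ gives
\[
\map^h_{\mathcal E}(\Phi X,\Phi Y)\ \simeq\ \holimsub{j\in J}\map^h_{\ProS}\bigl(\limsub{i\in I}X_i,\ Y_j\bigr)\ \simeq\ \holimsub{j\in J}\hocolimsub{i\in I}\map^h_{\S}(X_i,Y_j).
\]
Since $C^{cofin}$ is a full subcategory of $\S$, we have $\map^h_{\S}(X_i,Y_j)\simeq\map^h_{C^{cofin}}(X_i,Y_j)$, and the right-hand side matches the formula for $\map^h_{\Pro_{\infty}(C^{cofin})}(X,Y)$ displayed above. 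Hence $\Phi$ is fully faithful.

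The step I expect to be the main obstacle is the middle equivalence in the last display, that is, the assertion that each $X_i\in C^{cofin}$ is \emph{cocompact} in $\mathcal{E}$: the corepresentable functor $\map^h_{\mathcal E}(-,Y_j)$ must carry the cofiltered homotopy limit $\limsub{i\in I}X_i$ to the filtered homotopy colimit $\hocolimsub{i\in I}\map^h(X_i,Y_j)$. This rests on two facts that must be handled with care. First, the Bousfield localization must be transparent on these mapping spaces: since $Y_j$ is $C$-local and the induced inclusion $\mathcal{E}\hookrightarrow\mathrm{Ho}_{\infty}(\ProS)$ is fully faithful, the derived mapping space in $\ProS_C$ agrees with that in $\ProS$, which is then governed by \eqref{eq:pro-formula}. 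Second, the cofiltered limit defining $X$ in $\mathcal{E}$ must coincide with the underlying pro-space $\{X_i\}_{i\in I}$; this holds because $C$-local objects are closed under homotopy limits, so the limit may be computed in $\ProS$, where a cofiltered homotopy limit of constant pro-spaces recovers the pro-space itself. Granting these two points — both immediate consequences of the Proposition and of the formula \eqref{eq:pro-formula} stated above — the collapse of the outer homotopy limit for a constant target is formal, and matching the two mapping-space formulas completes the proof.
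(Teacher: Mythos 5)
Your proof is correct, and it takes a genuinely different (more hands-on) route than the paper's. The paper never constructs a comparison functor: it invokes the equivalence $\ProS\simeq\Pro_{\infty}(\S)$ of \cite[Theorem 5.2.1]{bhh}, observes that both $\ProS_C$ (the localization being a full embedding on underlying $\infty$-categories) and $\Pro_{\infty}(C^{cofin})$ (by fullness of $C^{cofin}\subset\S$) embed fully faithfully into this common ambient $\infty$-category, and then reduces the theorem to the statement that the two essential images agree, which is exactly the content of the preceding Proposition; full faithfulness is thus free, and the Proposition is the only real work. You instead build $\Phi$ from the universal property of the pro-completion, obtain essential surjectivity from the same Proposition, and verify full faithfulness by comparing formula (\ref{eq:pro-formula}) with the mapping-space formula in $\Pro_{\infty}(C^{cofin})$. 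The trade-off: your argument never needs \cite[Theorem 5.2.1]{bhh} --- in particular not its essential-surjectivity content, that every $\infty$-categorical pro-space admits a strict presentation --- but only the mapping-space identification (\ref{eq:pro-formula}), which the paper states independently, so your proof is more self-contained; the price is the explicit mapping-space bookkeeping, including the point you rightly isolate (that the cofiltered limit in $\mathcal{E}$ of the constant objects $X_i$ is the pro-space $\{X_i\}$, which holds because the inclusion of $\mathcal{E}$ into the underlying $\infty$-category of $\ProS$ is a right adjoint and hence preserves limits, after which (\ref{eq:pro-formula}) applies), together with the routine but elided naturality check that your chain of equivalences of mapping spaces is the map actually induced by $\Phi$ via the projections $\Phi X\to X_i$. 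Both arguments hinge on the same key input, namely the Proposition identifying $C$-local pro-spaces with cofiltered homotopy limits of diagrams in $C^{cofin}$.
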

\begin{proof}
We give a sketch of the proof and we take the opportunity to correct a mistake in the proof of \cite[Theorem 7.1.2]{bhh}. It is claimed there that what Isaksen calls $C$-nilpotent spaces is the closure of $C$ under finite homotopy limits. This is not how Isaksen defines $C$-nilpotent spaces (see \cite[Definition 3.1]{isaksencompletions}): $C$-nilpotent spaces in the sense of Isaksen are contained in the closure of $C$ under homotopy limits (which is the only thing that is used in the proof of \cite[Theorem 7.1.2]{bhh}) but the converse is probably not true in general.

Since $C^{cofin}$ is a full subcategory of $\S$, $\Pro_{\infty}(C^{cofin})$ is a full $\infty$-subcategory of $\Pro_{\infty}(\S)$. Moreover, $\ProS_C$ is a full $\infty$-subcategory of $\ProS$ and there is a comparison 
\[
\ProS \to \Pro_{\infty}(\S)
\]
which is an equivalence of $\infty$-categories by \cite[Theorem 5.2.1]{bhh}. Therefore, the claim is that the essential images of $\Pro_{\infty}(C^{cofin})$ and $\ProS_C$ in $\Pro_{\infty}(\S)$ agree. This follows from the proposition above.
\end{proof}

\subsection{Pro $p$-completion and rational completion}

\begin{prop}\label{prop : characterization of F_p cofin}
Let $C$ be the set of spaces $K(\ZZ/p, n)$ for $n \geq 0$. Then $C^{cofin}$ is the category of truncated spaces with finitely many components and whose homotopy groups are all finite $p$-groups.
\end{prop}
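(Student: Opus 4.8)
The plan is to set $\mathcal{P}$ to be the class of truncated spaces with finitely many components and all of whose homotopy groups (at every basepoint) are finite $p$-groups, and to prove the two inclusions $C^{cofin}\subseteq\mathcal{P}$ and $\mathcal{P}\subseteq C^{cofin}$ separately.

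For $C^{cofin}\subseteq\mathcal{P}$, I would first observe that each $K(\ZZ/p,n)$ lies in $\mathcal{P}$, and then check that $\mathcal{P}$ is closed under the two operations generating $C^{cofin}$. Closure under retracts is immediate, since the homotopy groups and $\pi_0$ of a retract are retracts of those of the ambient space. For finite homotopy limits it suffices, since these are generated by the terminal object and homotopy pullbacks, to treat a homotopy pullback $P$ of a cospan in $\mathcal{P}$. Here the Mayer--Vietoris long exact sequence expresses each $\pi_n(P)$ as an extension of a subgroup and a quotient of the finite $p$-groups $\pi_*(X),\pi_*(Y),\pi_*(Z)$; since finite $p$-groups are closed under subgroups, quotients and extensions, $\pi_n(P)$ is again a finite $p$-group, it vanishes above the common truncation level, and the tail of the sequence shows $\pi_0(P)$ is finite. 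Hence $P\in\mathcal{P}$.

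The reverse inclusion is the substantial direction. I would first record the building blocks. Reducing a finite abelian $p$-group to cyclic factors via products and inducting on the order through the fibration sequence attached to $0\to\ZZ/p^{e-1}\to\ZZ/p^{e}\to\ZZ/p\to0$ (which exhibits $K(\ZZ/p^{e},n)$ as the homotopy fiber of a map $K(\ZZ/p,n)\to K(\ZZ/p^{e-1},n+1)$) shows $K(A,n)\in C^{cofin}$ for every finite abelian $p$-group $A$ and $n\geq1$; moreover every nonempty finite discrete set is a retract of some $(\ZZ/p)^{k}=K(\ZZ/p,0)^{\times k}$ (while $\emptyset$ is a homotopy pullback of two distinct points of $K(\ZZ/p,0)$), so all finite discrete sets lie in $C^{cofin}$. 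Next, for a \emph{connected} $X\in\mathcal{P}$ I would argue that $X$ is nilpotent: $\pi_1$ is a finite $p$-group, hence nilpotent, and a finite $p$-group $G$ acts nilpotently on any finite $p$-group (filter by the $p$-adic filtration and use that the augmentation ideal of $\FF_p[G]$ is nilpotent, so $G$ acts unipotently on each $\FF_p$-subquotient). A connected, truncated, nilpotent space with finite homotopy groups then admits a finite principal refinement of its Postnikov tower whose successive fibers are of the form $K(\ZZ/p,m)$ and whose stages are homotopy fibers of maps into $K(\ZZ/p,m+1)$ (the bottom of the tower building $K(\pi_1,1)$ from central extensions). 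Each stage is therefore a finite homotopy limit of objects already shown to lie in $C^{cofin}$, so $X\in C^{cofin}$.

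Finally, a general $X\in\mathcal{P}$ is a finite disjoint union $\bigsqcup_{s\in S}Y_s$ of its connected components, each $Y_s\in C^{cofin}$ by the previous step and $S$ a finite set in $C^{cofin}$. The main obstacle here is that a coproduct is not a finite homotopy limit, so the components cannot be reassembled directly; I expect this non-connected assembly, together with the nilpotency input above, to be the crux of the argument. I would resolve it with a retract trick: using chosen basepoints one writes down maps $\bigsqcup_s Y_s\to S\times\prod_{t}Y_t$ and $S\times\prod_t Y_t\to\bigsqcup_s Y_s$ whose composite is the identity, exhibiting $\bigsqcup_s Y_s$ as a retract of $S\times\prod_t Y_t$. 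The latter is a finite product of objects of $C^{cofin}$, hence in $C^{cofin}$, and therefore so is $X$.
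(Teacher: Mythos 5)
Your proof is correct and follows essentially the same route as the paper's: finite sets as retracts of $(\ZZ/p)^n$, the disjoint-union-as-retract-of-a-product trick to assemble components, nilpotency of connected spaces in the class, and a principal refinement of the Postnikov tower with fibers $K(A,\ell)$ for $A$ a finite abelian $p$-group. You merely supply more detail at a few points the paper asserts without proof (e.g.\ that $K(A,n)\in C^{cofin}$ for all finite abelian $p$-groups $A$, and the unipotence of $p$-group actions), which is fine.
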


\begin{proof}
It is clear that a space in $C^{cofin}$ must satisfy these conditions.

Conversely, $C^{cofin}$ contains all the spaces $K((\ZZ/p)^n,0)$ for $n$ a positive integer. In particular, $C^{cofin}$ contains all finite sets as any finite set can be written as a retract of $(\ZZ/p)^n$ for $n$ large enough. If $X$ is the disjoint union of $A$ and $B$ then $X$ is a retract of $A \times B \times S^0$, and so $X$ is in $C^{cofin}$ if $A$ and $B$ are.

Now, a connected space $X$ satisfying the conditions of the Proposition is necessarily nilpotent (finite $p$-groups $G$ are nilpotent and every finite abelian $p$-group with an action of a finite $p$-group is nilpotent). Therefore, the Postnikov tower of $X \in C^{cofin}$ may be refined to a finite tower of principal fibrations where at each step we perform a homotopy base change along a map $* \to K(A, \ell)$ for $A$ a finite abelian $p$-group \cite[VI.6.1]{GJ}. Since $K(A, \ell)$ is in $C^{cofin}$, we are done.
\end{proof}

\begin{defn}\label{def : unipotent}
A $\QQ$-\emph{unipotent} group $G$ is a group which is uniquely divisible, nilpotent and such that $H_1(G, \QQ)$ is finite-dimensional.  A representation of a group $G$ on a $\QQ$-vector space is said to be $\QQ$-\emph{unipotent} if it admits a filtration preserved by the $G$-action and whose graded pieces are trivial representations.  
\end{defn}

Note that, for a $\QQ$-unipotent group, each graded piece of the lower central series must be a finite-dimensional $\QQ$-vector space. In particular, an abelian group is $\QQ$-unipotent if and only if it is a finite dimensional vector space over $\QQ$. 

\begin{prop}\label{prop : characterization of Q cofin}
Let $C$ be the set of spaces of the form $K(\QQ,n)$ for $n \geq 0$. Let $X$ be a space. Assume that the following conditions are satisfied:
\begin{enumerate}
\item the space $X$ is connected and truncated,
\item the homotopy groups $\pi_n X$ are $\QQ$-unipotent for $n \geq 1$,
\item the action of $\pi_1 X$ on $\pi_n X$ is $\QQ$-unipotent,
\end{enumerate}
then $X$ is in $C^{cofin}$.

Conversely, if $X$ is in $C^{cofin}$ then any path component of $X$ satisfies the conditions (1), (2) and (3) above.
\end{prop}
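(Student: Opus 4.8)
The plan is to prove the two implications separately. The forward direction (conditions $(1)$--$(3)$ $\Rightarrow$ membership in $C^{cofin}$) follows the template of Proposition \ref{prop : characterization of F_p cofin}, while the converse requires a closure argument whose only delicate point is the behaviour of fundamental groups under homotopy pullback. For the forward direction, I would first observe that a space $X$ satisfying $(1)$--$(3)$ is nilpotent in the classical sense: $\pi_1 X$ is $\QQ$-unipotent hence nilpotent, and a $\QQ$-unipotent action is in particular nilpotent, the unipotent filtration witnessing nilpotence of the action. As $X$ is truncated its Postnikov tower is finite, so by \cite[VI.6.1]{GJ} it refines to a finite tower of principal fibrations whose successive fibers are $K(V_i,\ell_i)$, where the $V_i$ run through the graded pieces of the lower central series of $\pi_1 X$ (for $\ell_i=1$) and through the graded pieces of a $\QQ$-unipotent filtration of the $\pi_1 X$-modules $\pi_n X$ (for $\ell_i=n\geq 2$). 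By the remark after Definition \ref{def : unipotent} together with condition $(2)$, each $V_i$ is a finite-dimensional $\QQ$-vector space, so $K(V_i,\ell_i)$ is a finite product of copies of $K(\QQ,\ell_i)$ and lies in $C^{cofin}$. Each stage is a homotopy pullback of the path fibration along the $k$-invariant $E_{i-1}\to K(V_i,\ell_i+1)$, hence a finite homotopy limit; starting from the terminal object (the empty homotopy limit) and climbing the tower gives $X\in C^{cofin}$.

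For the converse, let $\mathcal D$ be the class of spaces all of whose path components satisfy $(1)$--$(3)$. It suffices to show that $\mathcal D$ contains $C$ and is closed under retracts and finite homotopy limits, for then $C^{cofin}\subseteq \mathcal D$. Membership of $C$ and of the terminal object is immediate, and closure under retracts is routine once one checks that $\QQ$-unipotent groups and $\QQ$-unipotent representations are stable under retracts (a retract of a uniquely divisible nilpotent group is again one, and $H_1(-,\QQ)$ of a retract is a retract). Since finite homotopy limits are generated by the terminal object and homotopy pullbacks, everything reduces to the pullback case.

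Given $Y_1\xrightarrow{f}Y_0\xleftarrow{g}Y_2$ in $\mathcal D$, set $P=Y_1\times^h_{Y_0}Y_2$. Homotopy pullbacks of truncated spaces are truncated, so after fixing a basepoint in a chosen path component I would analyse $\pi_* P$ via the Mayer--Vietoris sequence
\[
\cdots \to \pi_{n+1}Y_0 \to \pi_n P \to \pi_n Y_1\times\pi_n Y_2 \to \pi_n Y_0 \to \cdots .
\]
For $n\geq 2$ every term is a finite-dimensional $\QQ$-vector space, so $\pi_n P$, an extension of a kernel by a cokernel of $\QQ$-linear maps, is again finite-dimensional, i.e.\ $\QQ$-unipotent. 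The subtle case is $n=1$, where $\pi_1 P$ fits in a short exact sequence
\[
1 \to N \to \pi_1 P \to Q \to 1,
\]
with $Q=\pi_1 Y_1\times_{\pi_1 Y_0}\pi_1 Y_2$ the honest fibre product of groups and $N$ the image of $\pi_2 Y_0$, hence a quotient of $\pi_2 Y_0$ by a $\QQ$-subspace and so a finite-dimensional $\QQ$-vector space. I would show $Q$ is uniquely divisible — the $n$-th root of $(a,b)$ is $(a^{1/n},b^{1/n})$, which lies in $Q$ by unique divisibility of the three groups — and then invoke the Malcev correspondence: $Q$ is a uniquely divisible nilpotent subgroup of the $\QQ$-unipotent group $\pi_1 Y_1\times\pi_1 Y_2$, so $\log Q$ is a subalgebra of a finite-dimensional nilpotent $\QQ$-Lie algebra and thus finite-dimensional, whence $Q$ is $\QQ$-unipotent. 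The conjugation action of $\pi_1 P$ on $N$ is pulled back from the $\QQ$-unipotent action of $\pi_1 Y_0$ on $\pi_2 Y_0$, hence unipotent; a direct computation of $(yt)^n$ then shows the relevant endomorphism $\sum_{i=0}^{n-1}\rho(\bar y)^i$ of $N$ has the form $n\cdot\id+(\textup{nilpotent})$, invertible over $\QQ$, giving unique $n$-th roots in $\pi_1 P$. Combined with nilpotence of the extension (nilpotent $Q$ acting nilpotently on abelian $N$) and finiteness of $H_1$ from the five-term sequence, this proves $\pi_1 P$ is $\QQ$-unipotent.

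Finally, condition $(3)$ for $P$ — that $\pi_1 P$ acts $\QQ$-unipotently on each $\pi_n P$ — follows because $\QQ$-unipotent representations are closed under subrepresentations, quotients and extensions, and the action on the two pieces of $\pi_n P$ is pulled back along $\pi_1 P\to\pi_1 Y_1\times\pi_1 Y_2$ and $\pi_1 P\to\pi_1 Y_0$ from the given unipotent actions. The main obstacle throughout is concentrated in the non-abelian bookkeeping for $\pi_1 P$: identifying $Q$ as a genuine fibre product, proving unique divisibility of the extension \emph{without} assuming it is central, and confirming that all the relevant actions remain unipotent so that the Malcev machinery applies.
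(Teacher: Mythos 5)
Your proof is correct and, in its overall architecture, matches the paper's: the forward direction (a $\QQ$-unipotent action is in particular nilpotent, the finite Postnikov tower refines to principal fibrations by \cite[VI.6.1]{GJ}, and then the proof of Proposition \ref{prop : characterization of F_p cofin} applies verbatim) is the same, as is the reduction of the converse to showing that the class of spaces whose components satisfy (1)--(3) contains the $K(\QQ,n)$'s, the point, and is closed under retracts and homotopy pullbacks. Where you genuinely diverge is the pullback step: the paper disposes of it in one line by citing \cite[Proposition 4.4.3]{mayponto}, which states that a homotopy pullback of $\mathscr{C}$-nilpotent spaces is $\mathscr{C}$-nilpotent, with $\mathscr{C}$ the abelian category of finite-dimensional $\QQ$-vector spaces, whereas you prove this closure by hand: the Mayer--Vietoris sequence in homotopy, the presentation of $\pi_1 P$ as an extension of the fibre product $Q$ of fundamental groups by a quotient $N$ of $\pi_2 Y_0$, the Malcev correspondence to see that the uniquely divisible subgroup $Q$ of a $\QQ$-unipotent group is $\QQ$-unipotent, and the operator $n\cdot\id+(\textup{nilpotent})$ to extract unique $n$-th roots in the extension. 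What the citation buys the paper is precisely the non-abelian bookkeeping you flag as the main obstacle --- in particular the $\pi_1$-equivariance of the long exact sequence of the fibration, which your argument uses but treats as standard (it is, and it is exactly what May--Ponto establish). What your route buys is self-containedness and an explicit verification of unique divisibility and finite-dimensionality of $H_1$, points the paper leaves implicit in translating ``$\mathscr{C}$-nilpotent'' back into conditions (2) and (3) of the statement. Both arguments are sound; yours is longer but makes visible exactly which group-theoretic inputs the quoted result encapsulates.
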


\begin{proof}
If the action of $\pi_1 X$ on $\pi_n X$ is $\QQ$-unipotent, then it is in particular nilpotent. Therefore, the Postnikov tower of $X$ can be refined to a tower of principal fibrations. Then the proof of the previous proposition applies \emph{mutatis mutandi}.

In order to prove the converse, it suffices to prove that the spaces whose components satisfy (1) (2) and (3) above contain all the spaces $K(\QQ,n)$ and are stable under finite homotopy limits and retracts. Is is obvious that this class of spaces contains all the spaces $K(\QQ,n)$ and that is stable under retracts. This class of spaces also contains the terminal space. Finally, consider a homotopy cartesian square
\[
\xymatrix{
X\ar[r]\ar[d]& E\ar[d]\\
A\ar[r]&B
}
\]
in which all components of $E$, $B$ and $A$ satisfy (1), (2) and (3). Clearly each component of $X$ will satisfy condition (1). On the other hand, by \cite[Proposition 4.4.3]{mayponto}, each component of $X$ is a $\mathscr{C}$-nilpotent space (see \cite[Definition 3.1.4]{mayponto} with respect to the abelian category $\mathscr{C}$ of finite dimensional $\QQ$-vector spaces. This means that conditions (2) and (3) are satisfied.
\end{proof}

We write 
\[
\S_{\FF_p}^{cofin} \quad \textup{ and } \quad \S_{\QQ}^{cofin}
\]
for the categories $C^{cofin}$ of the two propositions above. We denote the corresponding localizations by $\ProS_{\FF_p}$ and $\ProS_{\QQ}$ respectively.

\begin{defn}\label{defn:materialization}
For a pro-space $X$, we let $\Mat(X)$ denote the space obtained by taking the limit of the diagram $X$. This is the right adjoint to the functor obtained by taking the constant pro-space on a given space.
\end{defn}

Here we follow Lurie's terminology and notation in \cite{luriehigher} and refer to $\Mat(X)$ as the \emph{materialization} of $X$.

\begin{defn}\label{defn:completion}
Let $Y$ be a space (or more generally a pro-space). Let $\kk$ be $\FF_p$ or $\QQ$. The $\kk$-\emph{completion} of $Y$ is a fibrant replacement of $Y$ in $\ProS_\kk$. We denote this by $Y^\wedge$ (or $Y^\wedge_{\FF_p}$ or $Y^\wedge_{\QQ}$ if we need to be explicit).
\end{defn}

In $\infty$-categorical terms, using Theorem \ref{theo : interpretation of proSC}, completion is the left adjoint to the inclusion 
\[
\Pro_{\infty}(\S_{\kk}^{cofin}) \hookrightarrow \Pro_{\infty}(\S) \;.
\]
Informally, $Y^\wedge$ is given by the diagram on the comma category $Y/\S^{cofin}_{\kk}$ which takes an object $(Y \to Z)$ with $Z \in \S^{cofin}_{\kk}$ to $Z$.

\begin{prop}\label{prop : completion commutes with product}
Let $X$ and $Y$ be two spaces. The canonical map
\[{(X\times Y)}^\wedge\to X^\wedge \times {Y}^\wedge \]
is an equivalence in $\ProS_\kk$.
\end{prop}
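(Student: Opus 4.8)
The plan is to show that $\kk$-completion, viewed via Theorem~\ref{theo : interpretation of proSC} as the left adjoint to the inclusion $\Pro_{\infty}(\S_{\kk}^{cofin}) \hookrightarrow \Pro_{\infty}(\S)$, preserves finite products. Since the canonical map $(X\times Y)^\wedge \to X^\wedge \times Y^\wedge$ is always defined by the universal property of the product, the content is that it is an equivalence. My strategy is to test both sides against the local objects, i.e. to check that for every $K$ in $\S_{\kk}^{cofin}$ the induced map on derived mapping spaces
\[
\map^h(X^\wedge \times Y^\wedge, K) \to \map^h((X\times Y)^\wedge, K)
\]
is a weak equivalence. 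Because completion is a localization and $K$ is local, the right-hand side is $\map^h(X\times Y, K)$, so the real task is to understand mapping spaces out of a product of completions into the generators $K(\FF_p, n)$ or $K(\QQ, n)$.

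First I would reduce to the case where $K$ is an Eilenberg--MacLane space, since the localizing class $C$ consists precisely of the $K(\kk, n)$ and $\S_{\kk}^{cofin}$ is generated from these under finite homotopy limits and retracts; a map that becomes an equivalence after mapping into each $K(\kk,n)$ becomes an equivalence after mapping into any object of $C^{cofin}$, by the very definition of $C$-equivalence. Next, the key input is a K\"unneth-type statement: mapping a product into an Eilenberg--MacLane space is controlled by the continuous cohomology introduced above, and for the generators one has $H^*(X^\wedge, \kk) \cong H^*(X, \kk)$ (completion does not change $\kk$-cohomology, essentially by construction of the local model structure). I would then invoke the ordinary K\"unneth theorem over the field $\kk$, together with the fact that the relevant (co)homology groups are degreewise finite-dimensional over $\kk$ for spaces in $C^{cofin}$, to identify $H^*(X\times Y, \kk)$ with $H^*(X,\kk)\otimes_\kk H^*(Y,\kk)$ and match it with the cohomology of the product of completions.

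Concretely, I expect the cleanest route is to phrase everything through the formula~(\ref{eq:pro-formula}) for mapping spaces in $\ProS$: writing $X^\wedge$ and $Y^\wedge$ as cofiltered diagrams valued in $C^{cofin}$, the product $X^\wedge \times Y^\wedge$ is the levelwise product of these diagrams, and one computes $\map^h(X^\wedge\times Y^\wedge, K(\kk,n))$ as a homotopy limit over the indexing of $K$ of a homotopy colimit of mapping spaces out of finite products of spaces in $C^{cofin}$. Since each such finite product again lies in $C^{cofin}$ (which is closed under finite products, being closed under finite homotopy limits), the comparison reduces to the statement that the product diagram is cofinal, i.e.\ that the comma category presenting $(X\times Y)^\wedge$ is equivalent to the one presenting the product of completions. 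This is where I would use the K\"unneth isomorphism to verify that the two pro-objects of $C^{cofin}$ are isomorphic.

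The main obstacle is the finiteness hypothesis hidden in the K\"unneth step. The objects of $\S_{\kk}^{cofin}$ have finite homotopy (finite $p$-groups in the $\FF_p$ case, finite-dimensional $\QQ$-unipotent groups in the rational case), so their $\kk$-cohomology is degreewise finite-dimensional and the algebraic K\"unneth theorem applies without $\mathrm{Tor}$ or $\varprojlim{}^1$ corrections; but one must be careful that this finiteness is genuinely available at each stage of the completion tower and is compatible with the passage to homotopy colimits over the pro-indexing. Managing this compatibility---ensuring that the levelwise K\"unneth isomorphisms assemble into an isomorphism of pro-objects rather than merely a levelwise equivalence---is the delicate point, and I would handle it by exploiting Proposition~\ref{prop : characterization of F_p cofin} (respectively Proposition~\ref{prop : characterization of Q cofin}) to control the homotopy groups at every level and Theorem~\ref{theo : interpretation of proSC} to stay within the correct $\infty$-categorical pro-category throughout.
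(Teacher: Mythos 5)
Your proposal is essentially the paper's own proof: both reduce the statement to showing that the map induces an isomorphism on continuous $\kk$-cohomology (equivalently, on derived mapping spaces into the Eilenberg--MacLane generators, since weak equivalences in $\ProS_\kk$ are by definition the $C$-equivalences), and both compute $H^*(X^\wedge\times Y^\wedge,\kk)$ levelwise via the K\"unneth isomorphism applied to the finite-type spaces in $\S^{cofin}_\kk$, identifying the resulting colimit with $H^*(X,\kk)\otimes H^*(Y,\kk)$ by Fubini and the commutation of the tensor product with filtered colimits. The cofinality/comma-category digression in your third paragraph is unnecessary --- once the cohomology comparison is established you are done --- but it does not affect the correctness of the argument.
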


\begin{proof}
By definition, a map of pro-spaces $X\to Y$ is a $\kk$-weak equivalence if the induced map $H^*(Y,\kk)\to H^*(X,\kk)$ is an isomorphism. We have to check that 
\[H^*({X}^\wedge \times {Y}^\wedge,\kk)\to H^*(({X\times Y})^\wedge,\kk)=H^*(X\times Y,\kk)\]
is an isomorphism. Let us write $X^\wedge=\{X_i\}_{i\in I}$ and $Y^\wedge=\{Y_j\}_{j\in J}$. Then
\[X^\wedge \times Y^\wedge\cong \{X_i\times Y_j\}_{i\in I,j\in J}\]
and, therefore, by the K\"unneth isomorphism we have
\[H^*(X^\wedge \times Y^\wedge,\kk)\cong \colimsub{I\times J} H^*(X_i,\kk)\otimes H^*(Y_j,\kk) \;\] 
which is $H^*(X,\kk)\otimes H^*(Y,\kk)$ by Fubini and the fact that the tensor product commutes with filtered colimits, 
\end{proof}

\begin{rem}
A word of warning: the terminology \emph{completion} has several, subtly different meanings in the literature. On the one hand, there is the $p$-completion as defined by Bousfield which is the localization of the category of spaces at the $\FF_p$-homology equivalences. On the other hand, there is the Bousfield-Kan $\FF_p$-completion that is given as the limit of an explicit cosimplicial diagram of spaces (see \cite{BK}). Finally, there is the $\FF_p$-completion as defined above. And there are rational variants of these three constructions.

Unlike the first two, the third object is a pro-space and not just a space. The next proposition says that, in some cases, the three notions of completions coincide in the category of spaces.
\end{rem}

\begin{prop}\label{prop: completion of nilpotent spaces}
Let $\kk=\FF_p$ or $\QQ$. Let $X$ be a nilpotent space of finite type (i.e. the homology of $X$ is finite-dimensional in each degree). Then the following three constructions define weakly equivalent spaces~:
\begin{enumerate}
\item the localization of $X$ with respect to the $\kk$-homology equivalences,
\item the Bousfield-Kan $\kk$-completion $\kk_\infty X$ of $X$,
\item the materialization $\RR\Mat(X^{\wedge}_\kk)$.
\end{enumerate}
\end{prop}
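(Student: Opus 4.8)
The plan is to prove the three constructions agree by identifying each with a common reference point, namely the classical Bousfield-Kan completion $\kk_\infty X$, and then invoking finiteness of type to remove the discrepancies that can occur for general spaces. The key structural fact I would rely on is that for nilpotent spaces of finite type the higher $\lim^1$ obstructions and the coefficient subtleties collapse, so all three gadgets become the familiar $\kk$-local space. Concretely, I would organize the argument around the equivalence (1)$\simeq$(2) for nilpotent finite-type spaces, which is standard Bousfield-Kan theory, and then prove (2)$\simeq$(3) using the explicit description of $\Mat(X^\wedge_\kk)$ furnished by the propositions above.

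First I would recall the classical input: for a nilpotent space $X$ of finite type, the Bousfield localization at $\kk$-homology equivalences is represented by the Bousfield-Kan completion $\kk_\infty X$, and moreover $\kk_\infty X$ is itself $\kk$-good, nilpotent, with homotopy groups that are $\ZZ_p$-modules (in the $\FF_p$ case) or $\QQ$-vector spaces (in the $\QQ$ case). This handles (1)$\simeq$(2). For the comparison with (3), I would use the informal description of $Y^\wedge$ given right after Definition~\ref{defn:completion}: the $\kk$-completion is the pro-space indexed by the comma category $X/\S^{cofin}_{\kk}$ sending $(X\to Z)$ to $Z$. The materialization $\RR\Mat(X^\wedge_\kk)$ is then the homotopy limit of all maps from $X$ to spaces in $\S^{cofin}_\kk$, which by Propositions~\ref{prop : characterization of F_p cofin} and~\ref{prop : characterization of Q cofin} are exactly the truncated spaces with finite $p$-group (resp. $\QQ$-unipotent) homotopy. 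Thus $\RR\Mat(X^\wedge_\kk)$ is the homotopy limit of the Postnikov-type tower of $X$ through these local coefficient systems.

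The heart of the argument is then to match this homotopy limit with $\kk_\infty X$. Here I would proceed one Postnikov stage at a time, using that a nilpotent finite-type $X$ admits a principal refinement of its Postnikov tower with layers $K(A,m)$ for $A$ a finitely generated abelian group. On each layer, $\kk$-completion (whichever flavor) does the expected thing: it replaces $A$ by $A\otimes\ZZ_p$ (identifying $\ZZ_p=\RR\Mat$ of the pro-finite-$p$-group $\{A/p^r\}$) in the $\FF_p$ case, or by $A\otimes\QQ$ in the $\QQ$ case, exactly as $\kk_\infty$ does for finite-type Eilenberg-MacLane spaces. The finite-type hypothesis is precisely what guarantees $A\otimes\ZZ_p=\lim_r A/p^r$ has no derived inconsistency and that the relevant $\lim^1$ terms vanish, so that materializing the pro-system recovers the actual completed homotopy groups rather than merely a pro-object with the wrong underlying space. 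I would assemble these layerwise identifications by induction up the (finite, since $X$ is truncated after completion in each homotopy degree—though $X$ itself need not be truncated, the tower is convergent by finite-type nilpotence) Postnikov tower, checking compatibility of the principal fibration structure under materialization.

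The main obstacle I anticipate is the convergence and exchange-of-limits issue in the last step: $\RR\Mat$ is a homotopy limit over the possibly large comma category $X/\S^{cofin}_\kk$, whereas $\kk_\infty X$ is a homotopy limit of an explicit cosimplicial object, and matching them requires knowing that the cofiltered diagram computing $X^\wedge$ is cofinally captured by the Postnikov tower of $\kk_\infty X$. This is where finite type is indispensable and where one must argue carefully that no $\lim^1$ obstruction corrupts the materialization; for non-finite-type or non-nilpotent $X$ these three constructions genuinely differ, as flagged in the preceding remark. I would therefore phrase the key lemma as: for $X$ nilpotent of finite type, the natural map $\kk_\infty X\to\RR\Mat(X^\wedge_\kk)$ induced by viewing $\kk_\infty X$ as a target in $\S^{cofin}_\kk$ is a weak equivalence, and reduce everything to checking this on homotopy groups using the layerwise computation above.
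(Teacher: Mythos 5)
The first half of your argument---the equivalence (1)$\simeq$(2) for nilpotent spaces of finite type via classical Bousfield--Kan theory---is fine, and plays the same role as the paper's appeal to goodness (\cite[V.3.1, VI.5.3]{BK}). The genuine gap is in (2)$\simeq$(3), which is where all the content of the proposition lies. Your layerwise plan presupposes that $X^{\wedge}_\kk$ can be computed by completing a principal Postnikov refinement stage by stage, and that materialization then recovers the homotopy groups layer by layer. But completion is a \emph{left} adjoint (to the inclusion $\Pro_{\infty}(\S^{cofin}_{\kk})\hookrightarrow \Pro_{\infty}(\S)$), so it has no a priori compatibility with the homotopy limits involved: neither with the tower reconstructing $X$ from its truncations, nor with the fiber sequences of the principal refinement. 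The assertion that no correction arises---equivalently, that the pro-space $X^{\wedge}_\kk$ is pro-equivalent to an explicit tower---is precisely the theorem of Isaksen \cite{isaksencompletions} that the paper quotes: for $X$ of finite type, the Bousfield--Kan tower $X\to\{\kk_s X\}$ is a fibrant replacement in $\ProS_\kk$, whence $\RR\Mat(X^{\wedge}_\kk)$ is the homotopy limit of $\{\kk_s X\}$, i.e.\ $\kk_\infty X$, and (2)$\simeq$(3) follows in one line. You correctly flag this cofinality/$\lim^1$ assembly as the ``main obstacle,'' but the proposal never overcomes it; ``reduce everything to checking this on homotopy groups'' restates the problem rather than solving it, since one cannot compute the homotopy groups of $\RR\Mat(X^{\wedge}_\kk)$ without first having a manageable model of the pro-object $X^{\wedge}_\kk$.

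There is also a concrete error in your proposed key lemma: the map $\kk_\infty X\to \RR\Mat(X^{\wedge}_\kk)$ cannot be ``induced by viewing $\kk_\infty X$ as a target in $\S^{cofin}_{\kk}$,'' because $\kk_\infty X$ does not lie in $\S^{cofin}_{\kk}$. In the $\FF_p$ case its homotopy groups are Ext-$p$-complete modules such as $\ZZ_p$, not finite $p$-groups, and it need not be truncated (compare Proposition~\ref{prop : characterization of F_p cofin}); similarly $\QQ_\infty X$ is not truncated. The comparison map must instead be produced either levelwise from the tower (e.g.\ from the truncations of the $\kk_s X$, which do map to $\S^{cofin}_\kk$-objects), or a posteriori from locality---which is how the paper obtains (1)$\simeq$(3): $\RR\Mat(X^{\wedge}_\kk)$ is a homotopy limit of spaces built from the $K(\kk,n)$ by finite homotopy limits and retracts, hence is $\kk$-local, and by (2)$\simeq$(3) the map $X\to \RR\Mat(X^{\wedge}_\kk)$ is a $\kk$-homology equivalence, so it is the $\kk$-localization. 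Either route requires the Isaksen-type identification of $X^{\wedge}_\kk$ with an explicit tower \emph{first}; so the logical order of your argument must be reversed and its central step supplied.
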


\begin{proof}
For $X$ of finite type, Isaksen \cite{isaksencompletions} proves that the Bousfield-Kan tower 
\[
X \to \{\kk_{s} X\}
\]
is an explicit fibrant replacement in $\ProS_\kk$. This shows that (2) and (3) are weakly equivalent. Moreover, nilpotent spaces of finite type are good in the sense of Bousfield-Kan for $\kk = \QQ$ \cite[V.3.1]{BK} and $\kk = \FF_p$ \cite[VI.5.3]{BK}, which means that the map from $X$ to the homotopy limit of $\{\kk_s X\}$ is an $\kk$-homology isomorphism. Finally, since the spaces $K(\kk,n)$ are $\kk$-local and since $\RR\Mat(X^{\wedge}_\kk)$ is constructed from spaces of this form using homotopy limits, we know that $\RR\Mat(X^{\wedge}_\kk)$ is $\kk$-local. It follows that the map $X\mapsto \RR\Mat(X^{\wedge}_\kk)$ is a $\kk$-localization.
\end{proof}

\section{Goodness}\label{sec:goodness}
Let $\mathbf{Gpd}$ be the category of groupoids. We consider subcategories
\[
\cat{Gpd}^{cofin}_{\kk} \subset \mathbf{Gpd}
\]
for $\kk=\FF_p$ and $\kk=\QQ$. These are the subcategories of groupoids having finitely many objects and automorphism groups a finite $p$-group or a $\QQ$-unipotent group, respectively. 

\begin{defn}\label{defn:good-group}
The $\kk$-\emph{completion} of a groupoid $G$ (or more generally a pro-groupoid) is the image of $G$ by the left adjoint to the inclusion 
\[\Pro(\cat{Gpd}^{cofin}_{\kk}) \to \Pro(\mathbf{Gpd}).\]
As before, we denote this by $G^{\wedge}_{\FF_p}$ or $G^\wedge_\QQ$, or simply $G^{\wedge}$ if the field is implicit.
\end{defn}

\begin{prop}
If $G \in \cat{Gpd}^{cofin}_{\kk}$ then $BG \in \S^{cofin}_{\kk}$.
\end{prop}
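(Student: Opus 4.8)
The plan is to reduce to the connected case using the canonical decomposition of a groupoid, and then to quote the explicit descriptions of $\S^{cofin}_\kk$ obtained in Propositions \ref{prop : characterization of F_p cofin} and \ref{prop : characterization of Q cofin}. Since $G$ has finitely many objects, it has finitely many isomorphism classes of objects; choosing a representative $x_i$ in each class and writing $H_i := \aut_G(x_i)$, the groupoid $G$ is equivalent to the coproduct $\bigsqcup_{i=1}^k BH_i$ of one-object groupoids. The nerve carries coproducts of groupoids to disjoint unions of spaces and equivalences to weak equivalences, so $BG$ is weakly equivalent to $\bigsqcup_{i=1}^k BH_i$, where each $H_i$ is a finite $p$-group (when $\kk = \FF_p$) or a $\QQ$-unipotent group (when $\kk = \QQ$). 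Each $BH_i$ is connected and $1$-truncated, with $\pi_1 BH_i = H_i$ and $\pi_n BH_i = 0$ for $n \geq 2$; in particular its only nonzero homotopy group is $H_i$, and the action of $\pi_1$ on the higher (vanishing) groups is vacuously $\kk$-unipotent.

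When $\kk = \FF_p$ this already finishes the argument: $BG$ is truncated, has finitely many path components, and all of its homotopy groups are the finite $p$-groups $H_i$, so $BG \in \S^{cofin}_{\FF_p}$ directly by Proposition \ref{prop : characterization of F_p cofin}. When $\kk = \QQ$, each connected piece $BH_i$ satisfies conditions (1)--(3) of Proposition \ref{prop : characterization of Q cofin}, and hence $BH_i \in \S^{cofin}_\QQ$; what remains is to show that a finite disjoint union of members of $\S^{cofin}_\QQ$ is again one.

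For this last point I would reuse the retract trick appearing in the proof of Proposition \ref{prop : characterization of F_p cofin}: for nonempty $A, B$, the space $A \sqcup B$ is a retract of $A \times B \times S^0$ (embed via fixed basepoints and retract by the $S^0$-coordinate), and $\S^{cofin}_\QQ = C^{cofin}$ is closed under finite products and retracts by construction, so it suffices to check that $S^0 \in \S^{cofin}_\QQ$. But $S^0$ is a retract of $K(\QQ,0) = \QQ$ (include $\{0,1\}$ and retract $\QQ$ onto it by sending every rational other than $1$ to $0$), hence lies in $\S^{cofin}_\QQ$; induction on the number of components then gives $BG \in \S^{cofin}_\QQ$. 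The main subtlety is exactly this step: unlike the $\FF_p$ description, the $\QQ$ description of Proposition \ref{prop : characterization of Q cofin} is stated only for connected spaces, so closure under finite disjoint unions must be argued by hand, and one must track the nonemptiness needed for the retract. The degenerate case $k=0$ (the empty groupoid, $BG = \emptyset$) is covered separately by observing that $\emptyset$ is the homotopy pullback of two distinct points of $K(\kk,0)$ and is therefore in $C^{cofin}$.
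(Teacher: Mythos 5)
Your proof is correct, but it follows a genuinely different route from the paper's. The paper's proof restricts attention (``for simplicity'') to a single $\QQ$-unipotent group $G$, handles abelian $G$ by writing $BG$ as a finite product of copies of $K(\QQ,1)$, and then inducts along the lower central series, using that $B(G/\Gamma^k G)\to B(G/\Gamma^{k-1} G)\to K(\Gamma^k G/\Gamma^{k-1} G,2)$ is a principal fibration whose base lies in $\S^{cofin}_{\QQ}$ because $\Gamma^k G/\Gamma^{k-1} G$ is a finite-dimensional $\QQ$-vector space; the $\FF_p$ case is left to the reader. You instead split the groupoid into finitely many one-object groupoids and cite the characterizations of $\S^{cofin}_{\FF_p}$ and $\S^{cofin}_{\QQ}$ (Propositions \ref{prop : characterization of F_p cofin} and \ref{prop : characterization of Q cofin}) for each connected piece; this is legitimate and non-circular, since those propositions are proved earlier and independently, and their forward direction is exactly the Postnikov/principal-fibration argument that the paper's proof re-derives by hand. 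Your route has the virtue of proving the statement as actually stated: it covers multi-object groupoids, disconnected nerves, and even the empty groupoid, all of which the paper elides; the price is having to close $\S^{cofin}_{\QQ}$ under finite disjoint unions, which you do correctly via the retract $A\sqcup B$ of $A\times B\times S^0$ (the trick borrowed from the proof of Proposition \ref{prop : characterization of F_p cofin}) together with the observation that $S^0$ is a retract of the discrete space $K(\QQ,0)$. The paper's route, by contrast, is self-contained and makes visible exactly where unipotence enters (finite dimensionality of the lower-central-series quotients). One point you should make explicit: you conclude via the weak equivalence $BG\simeq\bigsqcup_i BH_i$, so you need either that $\S^{cofin}_{\kk}$ is closed under weak equivalences (its characterizations are homotopy-invariant) or, what is immediate here, that $BG$ is a homotopy retract of $\bigsqcup_i BH_i$, since the skeleton inclusion is an equivalence of groupoids; this is harmless but worth a sentence.
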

\begin{proof}
We prove the $\QQ$-unipotent case, the other case is easier and left to the reader. For simplicity, we will assume that $G$ is a $\QQ$-unipotent group. 

Suppose first that $G$ is abelian. Then $G$ is just a finite-dimensional $\QQ$-vector space. Therefore $BG$ is a finite product of copies of $K(\QQ,1)$, and so belongs to $\S^{cofin}_{\QQ}$.

For the general case, take the lower central series $\Gamma^k G$ for $G$. The fiber sequence
\[
B(\Gamma^k G/\Gamma^{k-1} G) \to B(G/\Gamma^k G) \to B(G/\Gamma^{k-1} G)
\]
extends one step to the right:
\[
B(G/\Gamma^k G) \to B(G/\Gamma^{k-1} G) \to K(\Gamma^k G/\Gamma^{k-1} G,2) \; .
\]
Indeed, $\Gamma^k G/\Gamma^{k-1} G$ is central in $G/\Gamma^k G$. This means that the conjugation action of 
$G/\Gamma^{k-1} G$ on $\Gamma^k G/\Gamma^{k-1} G$ is trivial, which implies that the first fiber sequence is principal. Now, since $G$ is $\QQ$-unipotent, $\Gamma^k G/\Gamma^{k-1} G$ is a finite-dimensional $\QQ$-vector space and therefore $K(\Gamma^k G/\Gamma^{k-1} G,2)$ belongs to $\S^{cofin}_{\QQ}$. By induction, we conclude that $B(G/\Gamma^k G)$ belongs to $\S^{cofin}_{\QQ}$ for all $k$.
\end{proof}

The canonical map $BG \to B(G^\wedge)$ extends, by the universal property of completion of pro-spaces, to a map
\begin{equation}\label{eq:goodmap}
(BG)^\wedge \to B(G^\wedge) \;
\end{equation}
of pro-spaces.

\begin{defn}\label{defi : goodness} We say that a groupoid $G$ is $\kk$-\emph{good} if $(\ref{eq:goodmap})$ is a weak equivalence in $\ProS_\kk$.
\end{defn}

Equivalently, a groupoid $G$ is $\kk$-good if the map $BG \to B(G^\wedge)$ is a weak equivalence in $\ProS_\kk$.

\begin{rem}
This (perhaps unfortunate) terminology in the case of profinite completion is due to Serre. Beware that this is different from what Bousfield-Kan call a good space. This should not lead to any confusion since we will never use Bousfield-Kan terminology in this paper.
\end{rem}

\begin{prop}\label{prop : fg abelian groups are good}
Finitely generated abelian groups are $\kk$-good. 
\end{prop}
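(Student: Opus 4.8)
The plan is to unwind the definition and reduce to cyclic groups. By the remark following Definition~\ref{defi : goodness}, a finitely generated abelian group $A$ is $\kk$-good precisely when the canonical map $BA \to B(A^\wedge)$ is a weak equivalence in $\ProS_{\kk}$, and by the cohomological criterion recalled in the proof of Proposition~\ref{prop : completion commutes with product} this means exactly that it induces an isomorphism on continuous $\kk$-cohomology. So the whole statement becomes a cohomology computation, which I would organize multiplicatively.

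First I would check that the class of $\kk$-good groups is closed under finite products. If $A$ and $B$ are groups then $B(A\times B)\simeq BA\times BB$, and by Proposition~\ref{prop : completion commutes with product} the $\kk$-completion of spaces commutes with finite products, so $(B(A\times B))^\wedge \simeq (BA)^\wedge\times (BB)^\wedge$. On the group side, a short cofinality argument shows that completion of groups also commutes with finite products: the normal subgroups of $A\times B$ of the form $U\times V$, with $A/U$ and $B/V$ in $\cat{Gpd}^{cofin}_{\kk}$, are cofinal among all quotients in $\cat{Gpd}^{cofin}_{\kk}$, whence $(A\times B)^\wedge\cong A^\wedge\times B^\wedge$ and $B((A\times B)^\wedge)\simeq B(A^\wedge)\times B(B^\wedge)$. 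Since a product of $\kk$-equivalences is again a $\kk$-equivalence (by the K\"unneth formula, exactly as in the proof of Proposition~\ref{prop : completion commutes with product}), the map for $A\times B$ is a $\kk$-equivalence as soon as those for $A$ and $B$ are. By the structure theorem, every finitely generated abelian group is a finite product of copies of $\ZZ$ and of finite cyclic groups $\ZZ/m$, so it remains to treat these two cases.

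For the cyclic cases I would identify the completion explicitly and compute. In the case $A=\ZZ$: for $\kk=\FF_p$ one has $A^\wedge=\ZZ_p=\{\ZZ/p^k\}_k$, so $B(A^\wedge)$ is the pro-space $\{K(\ZZ/p^k,1)\}_k$ and $BA=S^1$; for $\kk=\QQ$ one has $A^\wedge=\QQ$ and $B(A^\wedge)=K(\QQ,1)$. In the case $A=\ZZ/m$: for $\kk=\FF_p$ the completion is the $p$-primary part $\ZZ/p^{v_p(m)}$, while for $\kk=\QQ$ the unipotent completion of a finite group is trivial, so $B(A^\wedge)\simeq *$. In each case I would then compare continuous cohomology with $H^*(BA,\kk)$: for $\ZZ/m$ this reduces to the standard facts that $H^*(\ZZ/m,\QQ)$ is concentrated in degree $0$ and that $H^*(\ZZ/m,\FF_p)\cong H^*(\ZZ/p^{v_p(m)},\FF_p)$ (the prime-to-$p$ factor contributing nothing in positive degrees), and for $\ZZ$ with $\kk=\QQ$ it is the statement that $S^1\to K(\QQ,1)$ is a rational equivalence.

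The hard part will be the remaining computation for $A=\ZZ$ with $\kk=\FF_p$, namely that the natural map
\[
\operatorname{colim}_k H^*(\ZZ/p^k,\FF_p)\;\longrightarrow\; H^*(S^1,\FF_p)
\]
is an isomorphism, equivalently $H^*_{\mathrm{cont}}(\ZZ_p,\FF_p)\cong \Lambda_{\FF_p}(x)$ with $|x|=1$. The point is the vanishing of the higher cohomology in the colimit: although each $H^*(\ZZ/p^k,\FF_p)$ is large (of the form $\Lambda(x_k)\otimes \FF_p[y_k]$ with $|x_k|=1$, $|y_k|=2$, at least once $k$ is large enough), the transition maps induced by the surjections $\ZZ/p^{k+1}\twoheadrightarrow \ZZ/p^k$ are isomorphisms on $H^1$ but vanish in all higher degrees, because the degree-$2$ polynomial generator $y_k$ restricts to $p\cdot y_{k+1}$, hence to $0$ modulo $p$. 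Thus only the part in degrees $\leq 1$ survives the colimit, matching $H^*(S^1,\FF_p)$. This is the heart of the $\FF_p$-goodness of $\ZZ$; once it is in place, the multiplicative reduction above finishes the proof.
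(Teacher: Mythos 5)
Your proof takes essentially the same route as the paper's: reduce to $\ZZ$ and $\ZZ/m$ via closure of goodness under finite products, identify the completions explicitly ($B\QQ$ and the trivial group rationally, $\{B\ZZ/p^k\}_k$ and $B\ZZ/p^{v_p(m)}$ at $p$), and compare continuous cohomology --- your colimit computation killing the degree-two generators is precisely the verification the paper compresses into ``one checks easily.'' One small repair: your cofinality argument that group completion commutes with finite products is phrased in terms of quotients, which works for $\FF_p$ (images in finite $p$-groups are again finite $p$-groups) but not for $\QQ$, where the completion is not a system of quotients at all (e.g.\ $\ZZ^\wedge_\QQ = \QQ$ is not a quotient of $\ZZ$); for finitely generated abelian groups one can instead invoke the model $G^\wedge_\QQ = \{(G/\Gamma^k G)\otimes\QQ\}_k$ of Proposition~\ref{prop:lcsadjoint}, which here reduces to $-\otimes\QQ$ and visibly commutes with finite products.
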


\begin{proof}

By the previous proposition, good groups are stable under finite products so it suffices to prove that $\ZZ$ and $\ZZ/n$ are good. Then one checks easily that the $\QQ$-completion of $B\ZZ$ is $B\QQ$, the $\QQ$-completion of $B\ZZ/n$ is trivial; the $p$-completion of $B\ZZ$ is $\lim_nB\ZZ/p^n$ and the $p$-completion of $B\ZZ/n$ is $B\ZZ/p^m$ where $p^m$ is the largest power of $p$ that divides $n$.
\end{proof}

Another important example for us is the following.

\begin{prop}\label{prop : braid groups are good}
The pure braid groups $P_n$ are $\kk$-good. 
\end{prop}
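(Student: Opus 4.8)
The plan is to induct on $n$ using the Fadell--Neuwirth fibrations. Forgetting the last point gives a fibration
\[
\RR^2 \setminus \{n-1 \textup{ points}\} \lra \mathrm{Conf}_n(\RR^2) \lra \mathrm{Conf}_{n-1}(\RR^2),
\]
and since $\mathrm{Conf}_n(\RR^2)$ is a $K(P_n,1)$ and the fibre is homotopy equivalent to a wedge of $n-1$ circles, this realises the classifying space of a (split) extension
\[
1 \lra F_{n-1} \lra P_n \lra P_{n-1} \lra 1
\]
with $F_{n-1}$ free of rank $n-1$. The essential input, due to Falk--Randell, is that the pure braid groups are \emph{almost direct products}: $P_{n-1}$ acts trivially on $H_1(F_{n-1};\ZZ)$. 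As $F_{n-1}$ has cohomology concentrated in degrees $0$ and $1$, the monodromy then acts trivially on all of $H_*(F_{n-1};\kk)$.

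The base building block is that free groups of finite rank are $\kk$-good. A free group $F$ has cohomological dimension one, and so does its $\kk$-completion: the free pro-$p$ group when $\kk=\FF_p$, and the prounipotent (Malcev) completion when $\kk=\QQ$, whose continuous cohomology is the Chevalley--Eilenberg cohomology of the degree-completed free Lie algebra. In either case $H^{\geq 2}$ vanishes on both sides, while in degree one the comparison $H^1(F^\wedge;\kk)\to H^1(F;\kk)$ is the identification $\Hom(F,\kk)=\kk^{\,\textup{rk}}$. Hence $BF\to B(F^\wedge)$ is an isomorphism on continuous $\kk$-cohomology, i.e. a weak equivalence in $\ProS_\kk$.

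The inductive step rests on an extension principle that I would isolate as a lemma: if $1\to N\to G\to Q\to 1$ is an extension of $\kk$-good groups of finite type over $\kk$ in which $Q$ acts nilpotently on $H_*(N;\kk)$, then $G$ is $\kk$-good. To prove it, apply $\kk$-completion to the fibration sequence $BN\to BG\to BQ$. Nilpotence of the action lets one invoke the Bousfield--Kan fibre lemma, in its $\ProS_\kk$ incarnation, to conclude that $(BN)^\wedge\to(BG)^\wedge\to(BQ)^\wedge$ is again a fibration sequence. Goodness of $N$ and $Q$ identifies fibre and base with $B(N^\wedge)$ and $B(Q^\wedge)$, which have homotopy concentrated in degree one; the total space is therefore the classifying pro-space of an extension $1\to N^\wedge\to\widetilde G\to Q^\wedge\to 1$. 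An extension of objects of $\cat{Gpd}^{cofin}_\kk$ is again of that type — automatic for $\FF_p$, and for $\QQ$ guaranteed by the triviality of the monodromy, which forces the completed extension to remain pro-nilpotent — so $\widetilde G$ is $\kk$-complete and the universal property of completion identifies it with $G^\wedge$, giving $(BG)^\wedge\simeq B(G^\wedge)$.

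Feeding the extension $1\to F_{n-1}\to P_n\to P_{n-1}\to 1$ into this lemma, with $N=F_{n-1}$ good by the second step, $Q=P_{n-1}$ good by the inductive hypothesis, and the trivial (hence nilpotent) action supplied by Falk--Randell, completes the induction, the base case $P_1=1$ being trivial. I expect the main obstacle to be the extension lemma, and within it the passage of the fibration sequence through $\kk$-completion: one must ensure that the Bousfield--Kan fibre lemma is available in the localization $\ProS_\kk$ and that the completed group extension remains exact and stays in $\cat{Gpd}^{cofin}_\kk$ — which is exactly what the triviality of the monodromy on $H_*(F_{n-1};\kk)$ is there to secure.
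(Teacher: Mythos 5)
Your skeleton — induction on the split extensions $1\to F_{n-1}\to P_n\to P_{n-1}\to 1$, goodness of free groups as base case, and an extension principle driven by nilpotent monodromy — is exactly the strategy of the paper, which for $\kk=\FF_p$ cites Nakamura's pro-$p$ results on extensions of good groups and for $\kk=\QQ$ proves the extension principle by hand in the first appendix. But your proof of the extension lemma has a genuine gap at its core: the ``Bousfield--Kan fibre lemma in its $\ProS_\kk$ incarnation'' is not available off the shelf. The Bousfield--Kan fibre lemma concerns the space-level functor $\kk_\infty$, whereas you need a statement about the localization $\ProS_\kk$, namely that the pro-space completion carries $BN\to BG\to BQ$ to a fibre sequence. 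Even granting Isaksen's result that for finite-type spaces the Bousfield--Kan tower models the fibrant replacement in $\ProS_\kk$ (Proposition \ref{prop: completion of nilpotent spaces}), transporting the fibre lemma across this identification for a non-nilpotent base such as $BP_{n-1}$ requires precisely the spectral-sequence comparison that constitutes the actual proof. In other words, the lemma you isolate \emph{is} the hard part, and your argument for it appeals to a theorem that would have to be established by the very argument you are trying to avoid; you flag this honestly, but flagging the gap does not close it.

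A second, related problem is your treatment of group-level exactness. You attribute to Falk--Randell only the easy statement that $P_{n-1}$ acts trivially on $H_1(F_{n-1};\ZZ)$, and then claim this ``secures'' that the completed extension remains exact and stays in $\cat{Gpd}^{cofin}_\kk$. Completion is only right exact, so injectivity of $N^\wedge\to G^\wedge$ is not automatic; this is where the real content sits. What the paper actually uses is Falk--Randell's theorem on lower central series of almost direct products (Lemma \ref{lem:falkrandell}): the sequences $1\to F_n/\Gamma^kF_n\to P_{n+1}/\Gamma^kP_{n+1}\to P_n/\Gamma^kP_n\to 1$ are exact for every $k$. Combined with exactness of $-\otimes\QQ$ on nilpotent groups \cite[V.2.4]{BK}, this gives level-wise exactness of the completed extension inside the right category, after which the paper compares the Leray--Serre spectral sequences before and after completion, using a cofinality/Fubini argument and the unipotence-propagation Lemma \ref{lem:ext} to identify the $E_2$-pages. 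That comparison is exactly what your fibre-lemma black box would need to output; as written, your proof replaces it with an assertion.
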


\begin{proof}
The proof of the case $\kk=\QQ$ is carried out in detail in the first appendix. In the case $\kk=\FF_p$, we use the short exact sequences
\[1\to F_{n-1}\to P_n\to P_{n-1}\to 1\]
where the map $P_n \to P_{n-1}$ removes the last strand of a pure braid. The argument is then an induction on $n$ using a result of Nakamura (see \cite[Proposition 1.2.4 and 1.2.5]{nakamura}) that shows that, under reasonable assumptions, good groups are stable under extensions. The case $P_2\cong\ZZ$ is the base case of our induction and follows from the previous proposition.
\end{proof}

\section{Construction of certain homotopy left adjoints}

Our plan in the coming sections is to study the completion of configuration categories and how it interacts with taking tensor products. To achieve that we rely on the existence of certain localization functors (i.e. certain left adjoints). The purpose of this rather technical section is to establish such results. This is a delicate issue since the categories $\ProS_C$ are not cofibrantly generated, and so we cannot directly appeal to the usual Bousfield localization theorems.

\medskip
Throughout this section, we take $\cat{M}$ to be a cocombinatorial model category. The categories $\ProS_{\kk}$ are not quite cocombinatorial, for size reasons. Nevertheless, in Proposition \ref{prop:localization} at the end of this section, we will explain how the argument can be extended to these model categories.

\medskip
Let $U$ be a category with finite Hom sets. The category $\fun(U^{op},\cat{M})$ has a projective and injective model structure. Indeed, the category $\cat{M}$ being cocombinatorial means that the projective/injective model structure on $\fun(U^{op},\cat{M})$ is just the opposite of the injective/projective model structure on $\fun(U,\cat{M}^{op})$ which exists since $\cat{M}^{op}$ is combinatorial.

Now, consider a set $S$ of morphisms of $\fun(U^{op},\fin)$. Denote by $\tilde{U}$ the category $\fun(U^{op},\fin)$. The Yoneda embedding $y:U\to \tilde{U}$ induces a functor
\[y^* : \fun(\tilde{U}^{op},\cat{M})\to \fun(U^{op},\cat{M})\]
by precomposition. There is also a projective and an injective model structure on $\fun(\tilde{U}^{op},\cat{M})$ and $y^*$ preserves injective cofibrations and injective trivial cofibrations. It follows that the right Kan extension functor 
\[y_*:\fun(U^{op},\cat{M})\to \fun(\tilde{U}^{op},\cat{M})\]
has a right derived functor $\RR y_*$. This has the following practical description. Let $v$ be an object in $\fun(U^{op},\mathsf{Fin})$, and take a resolution $V_{\bullet}$ of $v$ by representables. Then, given $X \in \fun(U^{op},\cat{M})$, the value of $\RR y_*(X)$ at $v$ can be calculated as the homotopy limit of the simplicial object $X(V_\bullet)$.

\begin{defn}\label{defn:Slocal}
We say that an object $X$ of $\fun(U^{op},\cat{M})$ is \emph{$S$-local} if $\RR y_*(X)(f)$ is a weak equivalence in $\cat{M}$ for all morphisms $f$ in $S$.
\end{defn}

\begin{prop}
There exists a left model structure on $\fun(U,\cat{M})$ in which the cofibrations are the projective cofibrations, the fibrant objects are the fibrant objects $X$ in $\fun(U,\cat{M})$ that are $S$-local and the weak equivalences are the maps $f:V\to W$ such that for any $S$-local object $Z$, the induced map
\[[W,Z]\to [V,Z]\]
is a bijection, where $[-,-]$ denotes homotopy classes of maps in $\fun(U^{op},\cat{M})$.
\end{prop}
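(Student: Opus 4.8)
The plan is to realise the desired structure as a left Bousfield localization of the projective model structure on $\fun(U^{\op},\cat{M})$ — the category in which the $S$-local objects of Definition \ref{defn:Slocal} live — keeping the projective cofibrations and enlarging the weak equivalences. Two things have to be arranged: (i) replacing the $\RR y_*$-flavoured definition of $S$-locality by locality against an honest \emph{set} of morphisms in $\fun(U^{\op},\cat{M})$, so that the abstract machinery applies; and (ii) running that machinery despite $\cat{M}$ being only cocombinatorial.

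For (i), recall that evaluation $\mathrm{ev}_u\colon \fun(U^{\op},\cat{M})\to\cat{M}$ has a left adjoint $L_u$, and that for a presheaf of finite sets $v\colon U^{\op}\to\fin$ the formula $L_v(m)(u')=\coprod_{v(u')}m$ defines a left adjoint to $X\mapsto (y_*X)(v)$; all coproducts appearing are finite, because $v$ takes values in $\fin$ and $U$ has finite Hom-sets. Deriving this adjunction against the Quillen pair $(y^*,y_*)$ recorded above gives, for fibrant $X$, an equivalence $\map^h(\LL L_v(m),X)\simeq\map^h_{\cat{M}}(m,\RR y_*X(v))$. Now fix a \emph{set} $G$ of objects of $\cat{M}$ that jointly detect weak equivalences by mapping in; such a set exists because $\cat{M}^{\op}$ is combinatorial, hence its weak equivalences are accessible. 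For each $f\colon v\to w$ in $S$ we obtain a map $\LL L_f(m)\colon \LL L_v(m)\to\LL L_w(m)$, and we set $\hat S=\{\LL L_f(m): f\in S,\ m\in G\}$. Under the equivalence above, $\RR y_*X(f)$ is a weak equivalence for every $f\in S$ if and only if the map $\map^h(\LL L_w(m),X)\to\map^h(\LL L_v(m),X)$ induced by $\LL L_f(m)$ is an equivalence for all $f\in S$ and $m\in G$, that is, if and only if $X$ is $\hat S$-local. Since $S$ and $G$ are sets, $\hat S$ is a set, and the class of maps appearing in the statement (those inducing bijections $[W,Z]\to[V,Z]$ for all $S$-local $Z$) is exactly the class of $\hat S$-local equivalences.

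For (ii), the projective model structure on $\fun(U^{\op},\cat{M})$ is not combinatorial, but it is the opposite of the injective model structure on $\fun(U,\cat{M}^{\op})$, which \emph{is} combinatorial since $\cat{M}^{\op}$ is. A left Bousfield localization of a model category $\cat{N}$ is the same datum as a right Bousfield localization of $\cat{N}^{\op}$: passing to opposites exchanges cofibrations with fibrations and ``mapping into local objects'' with ``mapping out of colocal objects.'' We therefore apply the existence theorem for right Bousfield localizations of combinatorial model categories — which requires no properness hypothesis, and which produces a combinatorial \emph{right} model category rather than a genuine model category (a result of Barwick) — to $\fun(U,\cat{M}^{\op})$ at the set $\hat S^{\op}$, and then take opposites. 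This yields on $\fun(U^{\op},\cat{M})$ a left model structure whose cofibrations are the projective cofibrations, whose fibrant objects are the projectively fibrant $\hat S$-local (equivalently $S$-local) objects, and whose weak equivalences are the $\hat S$-local equivalences, as required.

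The main obstacle is step (i): one must be certain that the resolution description of $\RR y_*$ turns the condition ``$\RR y_*X(f)$ is a weak equivalence'' into a genuine locality condition against a set, and in particular that the finiteness of $\fin$ and of the Hom-sets of $U$ keeps the relevant coproducts finite and the collection $\hat S$ honestly small. The reason the conclusion is only a \emph{left} model structure, rather than a model structure in the usual sense, is precisely the absence of a properness hypothesis on $\cat{M}$; this is harmless for the applications, since every subsequent construction will take place on cofibrant objects.
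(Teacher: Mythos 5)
Your step (i) contains the gap. For $\hat S$-locality to coincide with $S$-locality you need the set $G$ to detect weak equivalences of $\cat{M}$ by mapping \emph{out}: under your adjunction, $X$ is $\hat S$-local iff $\map^h_{\cat M}(m,\RR y_* X(w))\to \map^h_{\cat M}(m,\RR y_* X(v))$ is an equivalence for all $m\in G$ and $f\colon v\to w$ in $S$, and you need this to force $\RR y_* X(f)$ to be a weak equivalence in $\cat M$; that is, the functors $\map^h_{\cat M}(m,-)$, $m\in G$, must be jointly conservative on fibrant objects. But the justification you offer --- $\cat M^{\op}$ is combinatorial, hence its weak equivalences are accessible --- yields the \emph{dual} datum: a set of objects $g$ of $\cat M$ such that the functors $\map^h_{\cat M}(-,g)$ are jointly conservative (homotopy generators of $\cat M^{\op}$ are \emph{co}generators of $\cat M$). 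These cannot be interchanged: a locality condition on $X$ must be corepresentable in $X$, and the condition ``$\map^h(\RR y_* X(f),g)$ is an equivalence'' is not of that form, so the cogenerators that cocombinatoriality does provide are useless here. Moreover, the property you actually need can genuinely fail for cocombinatorial $\cat M$, and is in doubt precisely for the $\cat M$ of interest: in $\ProS_\kk$ the weak equivalences are \emph{defined} by mapping into the objects of $\S^{cofin}_\kk$, and no mapping-out detecting set is available. (The phenomenon is already visible one level down: no \emph{set} of groups $\{G_i\}$ detects isomorphisms via $\Hom(-,G_i)$, since a simple group $\Sigma$ of cardinality larger than every $|G_i|$ admits only trivial homomorphisms to each $G_i$, so $\Sigma\to 1$ goes undetected; classifying spaces of large acyclic simple groups promote this to spaces, i.e.\ presentable $\infty$-categories need not have small cogenerating sets, equivalently cocombinatorial model categories need not have small generating sets.) So the reduction of $S$-locality to locality at a set of maps does not go through, and step (ii) has nothing to localize at.

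This is exactly the difficulty the paper's proof is built to avoid: instead of manufacturing a set of maps, it localizes at the \emph{class} of $S$-local objects, invoking Barwick's existence theorem for localizations at accessible, accessibly embedded full subcategories closed under homotopy limits and weak equivalences \cite[Theorem 5.22]{barwicklocalization}; the substantive work is then to verify that the $S$-local objects form a coaccessible, coaccessibly embedded subcategory, which the paper does by writing it as the preimage of the weak equivalences under the $\kappa$-cofiltered-limit-preserving functor $A=\mathrm{ev}_S\circ \RR y_*$ and citing \cite[Corollary A.2.6.5]{lurietopos}. To salvage your outline you would have to either prove that your particular $\cat M$ admits a small mapping-out-conservative set of objects (doubtful for $\ProS_\kk$), or switch to this class-based localization. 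A secondary point: even granted $\hat S$, the existence result you invoke in step (ii) --- colocalization of a combinatorial model category at a set of \emph{maps}, with no properness, yielding a right model structure --- is not the standard cellularization-at-a-set-of-objects theorem, so it would require a separate argument as well.
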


\begin{proof}
We argue as in the proof of \cite[Proposition 2.9]{Horel}. Let $\mathcal{K}$ be the full subcategory spanned by the $S$-local objects. According to \cite[Theorem 5.22]{barwicklocalization} it is enough to prove that $\mathcal{K}$ is stable under homotopy limits and weak equivalences and that it is coaccessible and coacessibly embedded in $\fun(U,\cat{M})$. By construction $\mathcal{K}$ is stable under homotopy limits and weak equivalences. By \cite[Proposition 2.5]{barwicklocalization}, there exists a regular cardinal $\kappa$ such that
\begin{itemize}
\item The $\kappa$-cofiltered limits in $\fun(U^{op},\cat{M})$ are homotopy limits.
\item There exists a $\kappa$-coacessible cofibrant replacement in $\fun(U^{op},\cat{M})_{inj}$
\item The weak equivalences in $\cat{M}^S$ form a $\kappa$-coaccessibly embedded $\kappa$-coaccessible subcategory of $\fun([1],\cat{M}^S)$ (where $[1]$ denotes the poset $\{0<1\}$).
\end{itemize} 
It follows first that $\mathcal{K}$ has $\kappa$-cofiltered limits and that the inclusion in $\fun(U^{op},\cat{M})$ preserves those. It remains to show that $\mathcal{K}$ is coaccessible. To do this we consider the composition
\[A:\fun(U^{op},\cat{M})\xrightarrow{\RR y_*}\fun(\tilde{U}^{op},\cat{M})\xrightarrow{ev_S}\fun([1],\cat{M})^S\]
where $ev_S$ evaluates a functor $X:\tilde{U}^{op}\to \cat{M}$ on the maps of $S$. The functor $A$ preserves $\kappa$-cofiltered limits. Moreover, $\mathcal{K}$ is by definition the full subcategory of $\fun(\tilde{U}^{op},\ProS)$ spanned by the objects $X$ such that $A(X)$  belongs to $(w\cat{M})^S\subset \fun([1],\cat{M})^S$. Then \cite[Corollary A.2.6.5]{lurietopos} yields the result.
\end{proof}

We denote this left model structure by $\fun_S(U^{op},\cat{M})$. By definition of a left Bousfield localization, the identity functor is a left Quillen functor
\[\fun(U^{op},\cat{M})\to\fun_S(U^{op},\cat{M})\]
It thus follows from \cite[Proposition 1.13]{barwicklocalization} that we have an adjunction at the level of homotopy categories. We record this in the following proposition.

\begin{prop}
The inclusion of the category of $S$-local objects in the category $\fun(U, \cat{M})$ has a homotopy left adjoint.
\end{prop}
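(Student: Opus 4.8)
The plan is to show that the preceding proposition gives us exactly what we need, essentially for free. The final statement asserts that the inclusion of $S$-local objects into $\fun(U,\cat{M})$ admits a homotopy left adjoint. The previous proposition constructs a left Bousfield localization, namely the left model structure $\fun_S(U^{op},\cat{M})$, in which the fibrant objects are precisely the fibrant $S$-local objects. The strategy is to read off the homotopy left adjoint directly from the universal property of this localization.

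First I would recall the general principle, which is the content of \cite[Proposition 1.13]{barwicklocalization} already cited in the discussion: whenever one has a left Bousfield localization of model categories presented by a left Quillen functor $F : \cat{N} \to \cat{N}_{loc}$ whose right adjoint is the identity, the induced functors on homotopy categories form an adjunction $\ho(\cat{N}) \rightleftarrows \ho(\cat{N}_{loc})$. Here $\cat{N} = \fun(U^{op},\cat{M})$ with its projective model structure and $\cat{N}_{loc} = \fun_S(U^{op},\cat{M})$ is the localization just constructed. The left derived functor $\LL\id$ of the identity is the homotopy localization functor, and its right adjoint is the derived inclusion of the homotopy category of $\cat{N}_{loc}$ back into that of $\cat{N}$.

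Next I would identify the homotopy category of $\fun_S(U^{op},\cat{M})$ with the $\infty$-category (or homotopy category) of $S$-local objects. This is immediate from the characterization of the weak equivalences in the localized model structure: a map is a weak equivalence precisely when it induces a bijection on homotopy classes into every $S$-local object, so the localized homotopy category is the full subcategory of $\ho\fun(U^{op},\cat{M})$ spanned by the $S$-local objects, with the inclusion corresponding under the Quillen adjunction to the derived right adjoint. Composing this identification with the adjunction from the previous paragraph yields a left adjoint to the inclusion of $S$-local objects, which is precisely the homotopy left adjoint claimed.

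I do not expect a serious obstacle here, since the real work was done in establishing the existence of the left model structure in the previous proposition; the present statement is a formal consequence. The only point requiring a little care is making sure the adjunction produced at the level of homotopy categories genuinely refines to a homotopy (i.e. derived) adjunction in the sense used elsewhere in the paper, so that it can later be applied to the concrete left adjoints (such as $j_!$ and $K$) that motivate the section. This is handled by invoking \cite[Proposition 1.13]{barwicklocalization}, which supplies exactly the derived adjunction rather than a mere adjunction of homotopy categories, so the statement follows directly.
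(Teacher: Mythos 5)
Your proposal is correct and matches the paper's own reasoning: the paper likewise treats this proposition as a formal record of the fact that the identity is a left Quillen functor $\fun(U^{op},\cat{M})\to\fun_S(U^{op},\cat{M})$ to the left Bousfield localization constructed in the preceding proposition, and then invokes \cite[Proposition 1.13]{barwicklocalization} to obtain the derived adjunction, whose right adjoint is the inclusion of the (fibrant) $S$-local objects. Your extra step of identifying the homotopy category of $\fun_S(U^{op},\cat{M})$ with the $S$-local objects is exactly the implicit content of the paper's statement, so nothing is missing.
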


We now extend this proposition to $\ProS_\kk$. For a set of maps $S$ in the category $\fun(U, \ProS_\kk)$, we define $S$-local objects as in Definition \ref{defn:Slocal}.

\begin{prop}\label{prop:localization}
The inclusion of the category of $S$-local objects in the category $\fun(U, \ProS_\kk)$ has a homotopy left adjoint.
\end{prop}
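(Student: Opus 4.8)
The plan is to reduce to the cocombinatorial case established in the previous proposition, by replacing $\ProS_\kk$ with a genuinely cocombinatorial model category presenting the same homotopy theory. As signalled at the start of the section, the failure of cocombinatoriality is purely a matter of size: the $1$-category $\Pro(\S)$ is large because $\S^\op$ is not essentially small. The homotopy theory, however, is controlled by something small. Indeed, by Propositions \ref{prop : characterization of F_p cofin} and \ref{prop : characterization of Q cofin}, every object of $\S_\kk^{cofin}$ is truncated with finitely many components and finite (resp.\ finite-dimensional) homotopy groups, so $\S_\kk^{cofin}$ is essentially small. Combining this with Theorem \ref{theo : interpretation of proSC}, the $\infty$-category underlying $\ProS_\kk$ is $\Pro_{\infty}(\S_\kk^{cofin})$, whose opposite
\[
\Pro_{\infty}(\S_\kk^{cofin})^\op \simeq \mathrm{Ind}_{\infty}\bigl((\S_\kk^{cofin})^\op\bigr)
\]
is a presentable $\infty$-category. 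This presentability of the opposite is precisely the $\infty$-categorical counterpart of cocombinatoriality.

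First I would choose a combinatorial model category $\cat{N}$ presenting $\mathrm{Ind}_{\infty}((\S_\kk^{cofin})^\op)$, so that $\cat{M} := \cat{N}^\op$ is a cocombinatorial model category whose underlying $\infty$-category is equivalent to that of $\ProS_\kk$. Since $U$ is small, the diagram categories $\fun(U,\cat{M})$ and $\fun(U,\ProS_\kk)$ present the same functor $\infty$-category. The previous proposition applies verbatim to the cocombinatorial category $\cat{M}$ and produces a homotopy left adjoint to the inclusion of $S$-local objects into $\fun(U,\cat{M})$; I would then transport this adjoint across the equivalence of underlying $\infty$-categories to obtain the desired homotopy left adjoint for $\fun(U,\ProS_\kk)$. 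The transport is legitimate because a homotopy left adjoint to a full inclusion is an intrinsically $\infty$-categorical datum.

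The step requiring the most care is checking that the notion of $S$-locality is preserved under this comparison, i.e.\ that it depends only on the underlying $\infty$-category. This holds because the defining condition of Definition \ref{defn:Slocal} is phrased through the derived right Kan extension $\RR y_*$, which, by its description as the homotopy limit of the objects $X(V_\bullet)$, is a homotopy-invariant construction; under either presentation it therefore corresponds to one and the same $\infty$-categorical functor, and the subcategory of $S$-local objects is the preimage of a fixed full subcategory under it. Granting this compatibility, the homotopy left adjoint produced for $\cat{M}$ corresponds correctly along the equivalence $\fun(U,\cat{M})\simeq\fun(U,\ProS_\kk)$, which is exactly what we want. (Alternatively, one could rerun the previous proof directly, observing that the appeals to \cite{barwicklocalization} and \cite{lurietopos} are accessibility statements that survive once the opposite $\infty$-category is known to be presentable.)
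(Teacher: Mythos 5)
Your proposal is correct and follows essentially the same route as the paper: both deduce from Theorem \ref{theo : interpretation of proSC} that $\ProS_\kk$ is copresentable, invoke the Dugger--Lurie classification of presentable $\infty$-categories (\cite[Proposition A.3.7.6]{lurietopos}) to replace $\ProS_\kk$ by an equivalent cocombinatorial model category, and then transport the homotopy left adjoint supplied by the previous proposition across the resulting equivalence. The paper compresses the transport step (including the homotopy-invariance of the $S$-locality condition) into the word ``straightforward''; your write-up simply makes that step explicit.
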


\begin{proof}
We know from Theorem \ref{theo : interpretation of proSC} that the $\infty$-category underlying $\ProS_\kk$ is equivalent to the $\infty$-category of pro-objects on $\S^{cofin}_{\kk}$. Therefore $\ProS_\kk$ is copresentable (i.e. its opposite is presentable). By a theorem of Dugger and Lurie (see \cite[Proposition A.3.7.6]{lurietopos}) on the classification of presentable $\infty$-categories, it follows that there exists a zig-zag of Quillen equivalences connecting $\ProS_\kk$ to $\cat{M}$ with $\cat{M}$ cocombinatorial. Using this zig-zag and the previous proposition, it is straightforward to construct the desired left adjoint.
\end{proof}

\begin{rem}
There is an explicit cocombinatorial model category that is equivalent to $\ProS_{\FF_p}$ due to Morel (see \cite{morelensembles} and \cite[Section 7]{bhh} for a comparison result).
\end{rem}

\section{A Boardman-Vogt product and its completions}

In \cite{BoavidaWeissBV}, a Boardman-Vogt type tensor product $\boxtimes$ was constructed relating the configuration categories of two manifolds to the configuration category of their product. In this section, we transfer this construction to the setting of pro-spaces.

\begin{defn}
A surjective map $f:\uli m \to \uli n$ is called \emph{selfic} if the function $\uli n \to \uli m$ defined by $x \mapsto \min f^{-1}(x)$ is increasing.
\end{defn}

\begin{defn}
Let $\boxfin$ denote the category whose objects are selfic maps ${\uli r \gets \uli k \to \uli s}$ such that the induced map $\uli k \to \uli r \times \uli s$ is injective. There are three projection maps $(p_0, p_1, p_2) : \boxfin \to \fin$ which map an object $\uli r \gets \uli k \to \uli s$ to $\uli r$, $\uli k$ and $\uli s$, respectively.
\end{defn}

In what follows, we let $\mathbf{C}$ be a model category. Given two contravariant functors $X, Y$ from $\simp(\fin)$ to $\mathbf{C}$, pullback along $(p_0, p_2)$ defines a contravariant functor on $\simp(\boxfin)$, which we can then left Kan extend along $p_1$ to obtain a new contravariant functor on $\simp(\fin)$. We denote this resulting functor by $X \boxtimes^\pre Y$.

\begin{defn} A functor $X : \simp(\fin)^\op \to \mathbf{C}$ is \emph{conservative} if for every map $\sigma \to \tau$ in $\simp(\fin)$ corresponding to a surjection $[m] \to [n]$ in $\Delta$, the induced map $X(\tau) \to X(\sigma)$ is a weak equivalence.
\end{defn}

Let $\Lambda$ be a conservatization functor. That is, $\Lambda$ is an endofunctor on the category $\fun(\simp(\fin)^\op, \mathbf{C})$ with the property that, for any $X$, $\Lambda X$ is conservative and is universal with respect to that property (i.e. it is a homotopy left adjoint). Such a functor exists for $\mathbf{C} = \ProS_\kk$ by Proposition \ref{prop:localization}. Such a functor also exists for $\mathbf{C} = \S$ (or more generally for any combinatorial model category) by standard Bousfield localizations techniques.

\begin{defn}
We define the bifunctor $\boxtimes$ on $\mathrm{Fun}(\simp(\fin)^\op,\mathbf{C})$ by the formula
\[X \boxtimes Y = \Lambda (X \boxtimes^{pre} Y) \; .\]
\end{defn}

The main goal of this section is to prove the following completed variant of \cite[Thm. 2.8]{BoavidaWeissBV}.

\begin{thm}\label{thm:con-pro-box} Suppose $M$ and $N$ are two connected topological manifolds. Then there is a natural weak equivalence 
\[
\config(M)^{\wedge} \boxtimes \config(N)^{\wedge} \to \config(M \times N)^{\wedge} \; 
\]
of functors $\simp(\fin)^{\op} \to \Pro(\S)_{\kk}$.
\end{thm}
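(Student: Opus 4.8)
The plan is to deduce the completed statement from the uncompleted version \cite[Thm. 2.8]{BoavidaWeissBV}, which provides a natural weak equivalence $\config(M) \boxtimes \config(N) \to \config(M \times N)$ of functors $\simp(\fin)^{\op} \to \S$. The essential point is that $\kk$-completion, realized here as the localization functor into $\ProS_\kk$, is a \emph{homotopy left adjoint} (Definition \ref{defn:completion}), and therefore interacts well with the left Kan extension and conservatization that build $\boxtimes$. First I would set up the precise comparison: applying $(-)^\wedge$ objectwise to $\config(M)$ and $\config(N)$ produces functors $\simp(\fin)^{\op} \to \ProS_\kk$, and I must compare $\config(M)^\wedge \boxtimes \config(N)^\wedge$ (with $\boxtimes$ computed in $\mathbf{C} = \ProS_\kk$) with the completion of the space-level tensor product $\big(\config(M) \boxtimes \config(N)\big)^\wedge$.

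The key step is to check that completion commutes with the two operations defining $\boxtimes^{\pre}$ and with $\Lambda$. Pullback along $(p_0,p_2)$ is just precomposition, so it commutes with objectwise completion trivially. For the left Kan extension along $p_1$, I would observe that completion is a left adjoint, hence a homotopy colimit preserving functor, and homotopy left Kan extensions are built from homotopy colimits; so $(-)^\wedge$ commutes with $p_{1!}$ up to natural equivalence. The main subtlety is the conservatization functor $\Lambda$: I would argue that the localization of Proposition \ref{prop:localization} producing $\Lambda$ on $\ProS_\kk$ is compatible with the space-level $\Lambda$ precisely because completion is itself a localization and the relevant left adjoints compose. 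Concretely, completion takes the conservative-localization maps on $\S$ to conservative-localization maps on $\ProS_\kk$, so $\big(\Lambda^{\S} Z\big)^\wedge \simeq \Lambda^{\ProS_\kk}(Z^\wedge)$ for $Z = \config(M)\boxtimes^{\pre}\config(N)$. One also needs that completion commutes with the external product used inside $\boxtimes^{\pre}$, which is exactly the content of Proposition \ref{prop : completion commutes with product}.

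Assembling these, I would obtain a chain of natural equivalences
\[
\config(M)^\wedge \boxtimes \config(N)^\wedge \;\simeq\; \big(\config(M)\boxtimes \config(N)\big)^\wedge \;\xrightarrow{\sim}\; \config(M\times N)^\wedge,
\]
where the first equivalence is the compatibility of completion with the three constructions above, and the second is obtained by applying the completion functor to the space-level equivalence of \cite[Thm. 2.8]{BoavidaWeissBV} (completion, being a functor, preserves weak equivalences). Naturality in $M$ and $N$ follows since every functor in sight is natural.

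I expect the hard part to be the compatibility of completion with the conservatization functor $\Lambda$. Unlike the external product and the left Kan extension, which are manifestly built from colimits and hence commute with the left adjoint $(-)^\wedge$ on formal grounds, $\Lambda$ is a Bousfield-type localization whose very existence on $\ProS_\kk$ required the delicate non-cofibrantly-generated machinery of Proposition \ref{prop:localization}. The difficulty is that a composite of two localizations need not be controlled by either one alone; one must verify that the class of maps inverted by completion followed by $\Lambda^{\ProS_\kk}$ agrees, up to the relevant saturation, with the image under completion of the maps inverted by $\Lambda^{\S}$. I would handle this by exhibiting both $\big(\Lambda^{\S}(-)\big)^\wedge$ and $\Lambda^{\ProS_\kk}\big((-)^\wedge\big)$ as the homotopy left adjoint to the same inclusion (that of conservative, $\kk$-local functors into all $\ProS_\kk$-valued functors), and invoking uniqueness of adjoints.
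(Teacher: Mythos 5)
Your proposal is correct and takes essentially the same route as the paper: the paper isolates your key step as Proposition \ref{prop:boxcommute} (completion commutes with $\boxtimes$, checked on the external product via Proposition \ref{prop : completion commutes with product}, on the left Kan extension along $p_1$ by commuting the corresponding right adjoints, and on $\Lambda$ by an adjunction argument), and then applies completion to the equivalence of \cite{BoavidaWeissBV}. The only cosmetic difference is in the $\Lambda$-compatibility: where you invoke uniqueness of homotopy left adjoints, the paper performs the equivalent verification that the right adjoint $\RR\Mat$ sends conservative objects to conservative objects, which is also the check your adjoint-uniqueness argument needs once the right adjoints are identified.
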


This relies on the following proposition.

\begin{prop}\label{prop:boxcommute}
Let $X$ and $Y$ be two functors $\simp(\fin)^\op \to \S$. Then the natural map
\[
{(X \boxtimes Y)}^\wedge \xrightarrow{} {X}^\wedge \boxtimes {Y}^\wedge  \; .
\]
is an objectwise weak equivalence of functors $\simp(\fin)^\op \to \ProS_\kk$ .
\end{prop}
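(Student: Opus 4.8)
The plan is to unwind both sides through their defining operations and reduce the claim to facts already established in the excerpt: that completion commutes with finite products (Proposition \ref{prop : completion commutes with product}), that completion is the left adjoint of the inclusion $\Pro_\infty(\S^{cofin}_\kk)\hookrightarrow\Pro_\infty(\S)$ and hence commutes with homotopy colimits, and that the conservatization functor $\Lambda$ is itself a homotopy left adjoint. Recall that $X\boxtimes Y = \Lambda(X\boxtimes^{\pre}Y)$, and that $X\boxtimes^{\pre}Y$ is built from $X$ and $Y$ by pulling back along $(p_0,p_2):\simp(\boxfin)\to\simp(\fin)\times\simp(\fin)$ and then left Kan extending along $p_1:\simp(\boxfin)\to\simp(\fin)$. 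So the entire construction $X\mapsto X\boxtimes Y$ is a composite of a restriction (pullback), a homotopy left Kan extension, a further pullback, and finally $\Lambda$. Since completion, being a left adjoint, commutes with homotopy left Kan extensions, the only genuinely nonformal input is the interaction of completion with the pointwise product that feeds into $X\boxtimes^{\pre}Y$, and the interaction with $\Lambda$.

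First I would record that the functor $(-)^\wedge$ applied objectwise to a functor $\simp(\fin)^\op\to\S$ is a left adjoint on the functor category (by Proposition \ref{prop:localization}, which gives us the relevant localizations in the pro-category). As a left adjoint it preserves homotopy left Kan extensions along $p_1$ and commutes with restriction along $(p_0,p_2)$ up to canonical equivalence. The preservation of the pullback is immediate because restriction is computed objectwise and completion is defined objectwise. For the homotopy left Kan extension step, the left Kan extension along $p_1$ is a homotopy colimit indexed by the relevant comma categories of $\simp(\boxfin)$; completion commutes with these homotopy colimits because it is a left adjoint between the $\infty$-categories $\Pro_\infty(\S)$ and $\Pro_\infty(\S^{cofin}_\kk)$. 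The key computation is at the level of the presheaf feeding the Kan extension: at an object $\uli r\gets\uli k\to\uli s$ of $\boxfin$ the value of $X\boxtimes^{\pre}Y$ before Kan extension is (a fiber of) a product $X(\sigma)\times Y(\tau)$, and here we invoke Proposition \ref{prop : completion commutes with product} to identify $(X\times Y)^\wedge\simeq X^\wedge\times Y^\wedge$. Assembling these, the natural map $(X\boxtimes^{\pre}Y)^\wedge\to X^\wedge\boxtimes^{\pre}Y^\wedge$ is an objectwise equivalence.

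It then remains to pass from $\boxtimes^{\pre}$ to $\boxtimes=\Lambda\circ\boxtimes^{\pre}$. Here I would use that $\Lambda$ is a homotopy left adjoint (a conservatization, i.e. a localization) in both the classical setting $\cat{C}=\S$ and the pro-setting $\cat{C}=\ProS_\kk$, and that the completion functor intertwines the two $\Lambda$'s. Concretely, since completion is a left adjoint and the conservative objects are characterized by a left-adjoint-preserved condition (the maps $X(\tau)\to X(\sigma)$ over surjections in $\Delta$ being equivalences, a condition stable under completion), the two localizations are compatible, so $(\Lambda Z)^\wedge\simeq\Lambda(Z^\wedge)$ naturally for $Z=X\boxtimes^{\pre}Y$. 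Combining this with the previous step yields $(X\boxtimes Y)^\wedge\simeq X^\wedge\boxtimes Y^\wedge$ objectwise, as desired.

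I expect the main obstacle to be the last step, the commutation of completion with the conservatization $\Lambda$. Unlike the product and Kan extension steps, which are formal consequences of completion being a left adjoint, matching the two localizations requires checking that $\Lambda$ in $\S$ and $\Lambda$ in $\ProS_\kk$ are genuinely compatible under $(-)^\wedge$ — that is, that completing a presheaf and then conservatizing agrees with conservatizing and then completing. This is where one must be careful that the local objects and the localization maps are preserved by completion; the cleanest route is to express both $\Lambda$'s via the same universal property and use that completion, as a left adjoint, carries the universal localization map of $\S$ to the universal localization map of $\ProS_\kk$, which forces the two composites to agree.
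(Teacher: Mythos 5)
Your overall decomposition --- commute completion past the objectwise products via Proposition \ref{prop : completion commutes with product}, past the left Kan extension $(p_1)_!$ by left-adjoint formalities, and finally past the conservatization $\Lambda$ --- is exactly the paper's, and your first two steps are correct and match the paper's treatment. The gap is in the third step, which you yourself flag as the main obstacle: your justification for $(\Lambda Z)^\wedge \simeq \Lambda(Z^\wedge)$ does not work as stated. You argue that completion is a left adjoint and that it preserves the conservativity condition, and conclude that ``the two localizations are compatible''. But a left adjoint that carries local objects to local objects need not commute with the localizations: take the identity functor of $\S$, with the source localized at a set of maps $S$ and the target localized at a proper subset $S'\subset S$ (say $S'=\emptyset$); every $S$-local object is $S'$-local, yet $L_{S'}=\id\neq L_{S}$. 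Likewise, your closing suggestion --- that completion, ``as a left adjoint, carries the universal localization map of $\S$ to the universal localization map of $\ProS_\kk$'' --- is precisely the statement to be proved, not a consequence of left adjointness: it amounts to saying that $Z^\wedge\to(\Lambda Z)^\wedge$ is again a $\Lambda$-local equivalence with local target, and the local-equivalence half is exactly what is missing from your argument.

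What makes the step work is a dual check, and this is how the paper argues: by the adjunction $(-)^\wedge\dashv\RR\Mat$, one has $\map^h(B^\wedge,W)\simeq\map^h(B,\RR\Mat W)$, so completion sends $\Lambda$-local equivalences to $\Lambda$-local equivalences as soon as the \emph{right} adjoint $\RR\Mat$ sends conservative objects of $\fun(\simp(\fin)^\op,\ProS_\kk)$ to conservative objects of $\fun(\simp(\fin)^\op,\S)$. This last point is clear: $\RR\Mat$ is applied objectwise and, being a derived functor, preserves weak equivalences, so if all the maps $W(\tau)\to W(\sigma)$ over surjections in $\Delta$ are equivalences, so are their images under $\RR\Mat$. (In the paper's phrasing: the square of right adjoints, formed by the inclusions of conservative objects and objectwise $\RR\Mat$, commutes trivially, hence so does the square of left adjoints, formed by completion and the two $\Lambda$'s.) Note also that the ``localization at a set of maps'' picture you implicitly lean on is delicate here, since $\ProS_\kk$ is only co-presentable and the paper's $\Lambda$ in the pro-setting is produced by the ad hoc Proposition \ref{prop:localization}; the right-adjoint check sidesteps this entirely. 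Inserting that one observation in place of your appeal to left adjointness closes the gap and recovers the paper's proof.
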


\begin{proof}
By Proposition \ref{prop : completion commutes with product}, we have that
\[
(X(\alpha) \times Y(\alpha))^\wedge \xrightarrow{\sim} X(\alpha)^\wedge \times {Y(\alpha)}^\wedge
\]
 for each $\alpha \in \simp(\fin)$. It follows that completion of $(p_0, p_2)^*(X \times Y)$ agrees with $(p_0,p_2)^*(X^{\wedge},Y^{\wedge})$. Moreover, it is easy to see that completion commutes with the extension to $\simp(\fin)$ along $p_1 : \simp(\boxfin) \to \simp(\fin)$. Indeed, the diagram of left adjoints
\[
	\begin{tikzpicture}[descr/.style={fill=white}]
	\matrix(m)[matrix of math nodes, row sep=2.5em, column sep=2.5em,
	text height=1.5ex, text depth=0.25ex]
	{
	\fun(\simp(\boxfin)^\op, \S) & \fun(\simp(\boxfin)^\op, \ProS_\kk) \\
	\fun(\simp(\fin)^\op, \S) & \fun(\simp(\fin)^\op, \ProS_\kk) \\
	};
	\path[->,font=\scriptsize]
		(m-1-1) edge node [auto] {$\wedge$} (m-1-2)
		(m-2-1) edge node [auto] {$\wedge$} (m-2-2)
		(m-1-1) edge node [left] {$(p_1)_!$} (m-2-1)
		(m-1-2) edge node [auto] {$(p_1)_!$} (m-2-2);
	\end{tikzpicture}
\]
commutes, because the corresponding diagram of right adjoints commutes trivially. To sum up, completion commutes with $\boxtimes^{\pre}$. It remains to see that completion commutes with $\Lambda$. By  construction, $\Lambda$ is the left adjoint to the inclusion of conservative objects. By the same reasoning with adjunctions as before, it suffices to see that the homotopy right adjoint to completion, $\RR\Mat$, sends conservative objects to conservative objects. But this is clear.
\end{proof}

\begin{proof}[Proof of Theorem \ref{thm:con-pro-box}]
We apply Proposition \ref{prop:boxcommute} and obtain a weak equivalence
\[
(\config(M)\boxtimes\config(N))^\wedge \to \config(M)^\wedge \boxtimes \config(N)^\wedge  \;.
\]
The proof then follows from the main result of \cite{BoavidaWeissBV} which asserts the existence of a map
\[\config(M) \boxtimes \config(N) \to \config(M \times N) \]
which is a weak equivalence.
\end{proof}

\section{The Grothendieck-Teichm\"{u}ller group}\label{sec:gt-on-En}

In this section, we recall the definition of the Grothendieck-Teichm\"{u}ller group and its relation to the little $2$-disks operad. There are two flavors of Grothendieck-Teichm\"{u}ller groups that correspond to the two completions that we are considering. 

\medskip
We begin by recalling the construction of the operad of \emph{parenthesized braids}, denoted $\PaB$. This is an operad in the category of groupoids. The set of objects of $\PaB(n)$ is the set of planar binary trees with $n$ leaves labelled from $1$ to $n$. A morphism between labelled binary trees $p \to q$ is a braid connecting the set of leaves of $p$ to the set of leaves of $q$ in a way that respects the labellings. The operad structure is given by cabling, i.e. replacing strands by braids.

The completion (in the sense of Section \ref{sec:goodness}) of a product of groupoids with finitely many objects is the product of the completions. Therefore, by applying completion arity-wise to $\PaB$ we obtain an operad in $\Pro(\mathbf{Gpd})_{\kk}$, denoted $\PaB^{\wedge}_{\kk}$.

\begin{defn}\label{defn : GT}
The Grothendieck-Teichm\"{u}ller group $\GT_{\kk}$ is the group of automorphisms of $\PaB^{\wedge}_{\kk}$ that fix the objects. 
\end{defn}

This definition is essentially due to Drinfeld \cite{drinfeldquasi} (see also \cite{fressebook} for a thorough discussion).

\medskip
By taking classifying spaces, we get an action of $\GT_\kk$ on the operad $B(\PaB^{\wedge}_\kk)$. To explain the relation with the little disks operad, we need a notion of $\infty$-operads in pro-spaces. We use the following.

\begin{defn}\label{def : infinity operads in pro-spaces}
An $\infty$-operad in $\ProS_\kk$ is a functor $X : \mathsf{Tree}^\op \to \ProS_\kk$ such that $X_\eta$ is contractible and for every tree $T$ the map
\[
X_T \to \prod^{h}_{v \in T} X_{C_{|v|}}
\]
is a weak equivalence, where the homotopy product runs over the vertices of $T$.
\end{defn}
The object $(B\PaB)^\wedge_\kk$ obtained by applying completion arity-wise to $B\PaB$ is an $\infty$-operad in $\ProS_\kk$ (not a strict operad) by Proposition \ref{prop : completion commutes with product}. It is a model for $(E_2)^\wedge_\kk$, the completion of $E_2$. There is a canonical map
\[
(B\PaB)^\wedge_\kk \to B(\PaB^{\wedge}_\kk)
\]
which is a weak equivalence of $\infty$-operads in pro-spaces by Proposition \ref{prop : braid groups are good}. Therefore, we have a map
\[
\GT_\kk \to \aut^h((E_2)^\wedge_\kk) \; .
\]
This is in fact an isomorphism on $\pi_0$ by \cite{fressebook} and \cite{Horel}, though we will not need that.

\medskip
We end this section with a completed analog of Corollary \ref{cor:aut}. Recall the inclusion $j :  \simp(\fin) \to \mathsf{Tree}$ from section \ref{sec:confcats}.
\begin{prop}\label{prop:complete-op-conf}
The restriction map
\[
j^* : \aut^h((E_n)_{\kk}^\wedge) \to \aut^h(\config(\RR^n)^\wedge_{\kk})
\]
is a weak equivalence for all $n$.
\end{prop}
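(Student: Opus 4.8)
The plan is to mimic the proof of Corollary \ref{cor:aut} in the $\kk$-complete setting, the essential new input being a completed analog of the reconstruction Theorem \ref{thm: from simpfin to omega}. First I would note that $j^*$ is restriction along $j : \simp(\fin) \to \tree$ while completion is performed arity-wise, so the two operations commute; combined with the identification $j^* E_n \simeq \config(\RR^n)$ this yields a natural weak equivalence $j^*\big((E_n)^\wedge\big) \simeq \config(\RR^n)^\wedge$ in $\fun(\simp(\fin)^\op, \ProS_\kk)$. Thus the map in question is exactly the map on homotopy automorphism spaces induced by the functor $j^*$ evaluated at $(E_n)^\wedge$, and it suffices to show that $j^*$ is homotopically fully faithful at this object.

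The heart of the argument is a $\kk$-complete reconstruction statement: the derived counit $j_! j^* (E_n)^\wedge \to (E_n)^\wedge$ becomes a weak equivalence after applying the localization $K^\wedge$ onto the subcategory $\cat{R}^\wedge$ of $\infty$-operads in $\ProS_\kk$ (in the sense of Definition \ref{def : infinity operads in pro-spaces}) whose values at $C_0$ and $C_1$ are contractible. To obtain this I would prove, following the template of Proposition \ref{prop:boxcommute}, that completion commutes with both $j_!$ and $K$. For $j_!$ this is an adjunction argument: the right adjoint of $\wedge \circ j_!$ is $j^* \circ \RR\Mat$ and that of $j_! \circ \wedge$ is $\RR\Mat \circ j^*$, and these agree because the reindexing functor $j^*$ trivially commutes with the objectwise materialization $\RR\Mat$. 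For $K$ one argues the same way, reducing to the statement that $\RR\Mat$ carries objects of $\cat{R}^\wedge$ to objects of $\cat{R}$; this holds because the materialization of a contractible pro-space is contractible, so $\RR\Mat$ preserves the contractibility of the $C_0$- and $C_1$-values. The existence of $K$ in the pro-space setting is supplied by Proposition \ref{prop:localization}. Granting these commutations, applying $\wedge$ to the uncompleted equivalence of Theorem \ref{thm: from simpfin to omega} produces $K^\wedge\big(j_! j^* (E_n)^\wedge\big) \xrightarrow{\sim} (E_n)^\wedge$; note that $(E_n)^\wedge$ already lies in $\cat{R}^\wedge$, since $E_n(C_0)$ and $E_n(C_1)$ are contractible, so $K^\wedge$ fixes it.

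With the reconstruction in hand I would conclude by the standard adjunction formalism. Writing $L = K^\wedge j_!$ and letting $R = j^*$ be restricted to $\cat{R}^\wedge$, we have $L \dashv R$, and the counit $L R \to \id$ is an equivalence at $X = (E_n)^\wedge$. Hence the natural map
\[
\map^h\big((E_n)^\wedge, (E_n)^\wedge\big) \xrightarrow{\,j^*\,} \map^h\big(\config(\RR^n)^\wedge, \config(\RR^n)^\wedge\big)
\]
is a weak equivalence, being the composite $\map^h(X,X) \simeq \map^h(LRX, X) \simeq \map^h(RX, RX)$, where the first equivalence uses that the counit at $X$ is an equivalence and the second is the adjunction $L \dashv R$. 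Since this equivalence is induced by the functor $j^*$, it is compatible with composition, and restricting to the subspaces of homotopy-invertible components gives the asserted weak equivalence $j^* : \aut^h((E_n)^\wedge) \to \aut^h(\config(\RR^n)^\wedge)$.

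The step I expect to be the main obstacle is the verification that completion commutes with the localization $K$ onto $\cat{R}$. Unlike the functors $\boxtimes^{\pre}$ and $\Lambda$ handled in Proposition \ref{prop:boxcommute}, here one must first know that the $\infty$-categorical localization onto $\cat{R}$ exists and is compatible with the pro-structure; this is precisely the kind of non-cofibrantly-generated localization that Proposition \ref{prop:localization} was designed to provide, and that proposition is what makes the $\RR\Mat$-based commutation argument legitimate. Once this existence is secured, the remaining content, namely the preservation of contractibility of the $C_0$- and $C_1$-values by $\RR\Mat$, is straightforward.
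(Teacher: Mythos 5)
Your proof is correct and is essentially the paper's own argument: both reduce the statement, via the adjunction formalism, to showing that the derived counit $j_!j^*\big((E_n)^\wedge_\kk\big) \to (E_n)^\wedge_\kk$ is an equivalence, prove that completion commutes with the relevant left adjoints by observing that the right adjoints $j^*$ and $\RR\Mat$ commute (with Proposition \ref{prop:localization} supplying the existence of the localization), and then conclude from the uncompleted Theorem \ref{thm: from simpfin to omega}. The only difference is presentational: the paper absorbs the localization $K$ into the localized model structure $\cat{R}$ on $\fun(\tree^\op,\ProS_\kk)$, so that $j_!$ is a single left Quillen functor and the commutation is one step, whereas you factor $K^\wedge$ out explicitly and verify its compatibility with completion (via $\RR\Mat$ preserving contractibility of the $C_0$- and $C_1$-values) as a separate step.
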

\begin{proof}
As in section \ref{sec:confcats}, we are considering an adjunction
\[
j_! : \fun(\simp(\fin)^\op, \ProS_{\kk}) \leftrightarrows \cat{R} : j^* \; .
\]
Here $\cat{R}$ denotes the category $\fun(\mathsf{Tree}^\op, \ProS_{\kk})$ equipped with the localized model structure whose fibrant objects are the $\infty$-operads in pro-spaces $X$ having $X_{C_0}$ and $X_{C_1}$ weakly pro-contractible. This category is a localization of the model category $\fun(\mathsf{Tree}^\op, \ProS_{\kk})$, which exists by Proposition \ref{prop:localization}. The adjunction $(j_!, j^*)$ is a Quillen adjunction, and the statement of the proposition is equivalent to the following: the derived counit map
\[
j_! j^* ((E_n)_{\kk}^\wedge) \to (E_n)_{\kk}^\wedge
\]
is a weak equivalence in $\cat{R}$. Clearly, $j^* ((E_n)_{\kk}^\wedge) = j^* (E_n)_{\kk}^\wedge$. Also, the left adjoints $j_!$ and $(-)^\wedge_\kk$ commute, since their right adjoints $j^*$ and $\Mat$ trivially commute. Therefore, the derived counit map above is the completion of the derived counit map $j_! j^* E_n \to E_n$ which we know to be a weak equivalence by Theorem \ref{thm: from simpfin to omega}.
\end{proof}

\section{The box product and the join}

The theorem below (Theorem \ref{thm:main-spheres}) will be important to us as a way to generate interesting automorphisms of the completion of the little disks operads. In order to simplify notation, in this section $X^\wedge$ denotes the $\kk$-completion of a space $\kk=\FF_p$ or $\QQ$, in the sense of Definition \ref{defn:completion}.

The value of $\config(\RR^n)^{\wedge}$ at the finite set $\uli{2}$ is weakly equivalent to the completed sphere $(S^{n-1})^{\wedge}$. In particular, any homotopy automorphism of $\config(\RR^n)^{\wedge}$ induces a homotopy automorphism of $(S^{n-1})^{\wedge}$.

\begin{thm}\label{thm:main-spheres}
The restriction map 
\[
\aut^h(\config(\RR^n)^\wedge) \to \aut^h({(S^{n-1})}^\wedge)
\]
is surjective on $\pi_0$.
\end{thm}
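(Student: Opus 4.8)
The plan is to combine the completed Boardman--Vogt product of Section~5 with the Grothendieck--Teichm\"uller action of Section~6, using the elementary fact that splitting a sphere into a join is exactly what the box product does in arity two. First I would pin down the target. For $n\geq 2$ the space $(S^{n-1})^\wedge$ is the completion of a nilpotent finite type space and, since homotopy classes of self-maps of a sphere are detected by an integer (the degree), $\pi_0\aut^h((S^{n-1})^\wedge)\cong\ZZ_p^\times$ (resp.\ $\QQ^\times$), the units realized as degrees --- this is the identification already appearing in Theorem~E for the arity-$2$ piece $E_n^\wedge(2)\simeq(S^{n-1})^\wedge$. It therefore suffices to exhibit, for each unit $u$, a homotopy automorphism of $\config(\RR^n)^\wedge$ whose restriction to $\uli 2$ has degree $u$ on $(S^{n-1})^\wedge$.

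The geometric heart is a natural identification of the box product in arity two with a join. Writing $\RR^n=\RR^2\times\RR^{n-2}$, Theorem~\ref{thm:con-pro-box} furnishes a natural weak equivalence $\config(\RR^2)^\wedge\boxtimes\config(\RR^{n-2})^\wedge\simeq\config(\RR^n)^\wedge$ in $\ProS_\kk$. I would isolate a lemma: for functors $X,Y:\simp(\fin)^\op\to\ProS_\kk$ whose values at $\uli 1$ are contractible, there is a natural equivalence $(X\boxtimes Y)(\uli 2)\simeq X(\uli 2)\ast Y(\uli 2)$. The reason is that the objects of $\boxfin$ lying over $\uli 2$ along $p_1$ reduce, up to cofinality, to the three combinatorial types $(\uli 2,\uli 2)$, $(\uli 1,\uli 2)$, $(\uli 2,\uli 1)$, according to whether the two points are separated in both factors or only in one; the left Kan extension along $p_1$ then computes the homotopy colimit of the span of projections $X(\uli 2)\gets X(\uli 2)\times Y(\uli 2)\to Y(\uli 2)$, which is precisely the join. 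For configuration categories this recovers the usual decomposition $S^{n-1}\simeq S^{1}\ast S^{n-3}$ obtained by splitting the separation vector of two points in $\RR^2\times\RR^{n-2}$ into its two components together with their ratio.

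Granting the lemma, the result is pure functoriality. By surjectivity of the cyclotomic character, the action of Section~6 supplies, for each unit $u$, an element $\phi\in\GT_\kk$ acting on $\config(\RR^2)^\wedge\simeq(E_2)^\wedge_\kk$ whose restriction to $\uli 2$ has degree $\chi(\phi)=u$ on $(S^1)^\wedge$. Since $\boxtimes$ is a homotopy bifunctor, $\phi\boxtimes\id$ is a homotopy automorphism of $\config(\RR^n)^\wedge$, and naturality of the join identification yields $(\phi\boxtimes\id)|_{\uli 2}\simeq\phi|_{\uli 2}\ast\id$. As degree is multiplicative under joins, this map has degree $u\cdot 1=u$ on $(S^{n-1})^\wedge$, which is what we wanted. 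The degenerate cases $n=0,1$ are immediate: for $n=1$ the reflection of $\RR$ realizes the swap of $(S^0)^\wedge$, and $n=0$ is vacuous.

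The step I expect to be the main obstacle is the join lemma, and specifically the interaction of the left Kan extension along $p_1$ with the conservatization functor $\Lambda$ and with completion upon evaluating at the $0$-simplex $\uli 2$. One must check that neither $\Lambda$ nor $(-)^\wedge_\kk$ disturbs the homotopy pushout computing the join, and that the cofinality reduction to the three combinatorial types is carried out legitimately inside $\simp(\boxfin)$ rather than in $\boxfin$ itself. Once this bookkeeping is under control, the rest is formal.
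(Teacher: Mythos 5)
Your strategy is the same as the paper's: decompose $\RR^n\cong\RR^2\times\RR^{n-2}$, produce automorphisms of $\config(\RR^2)^\wedge$ of arbitrary degree from the $\GT_\kk$-action via surjectivity of the cyclotomic character, and propagate them through $\boxtimes$ using an arity-$2$ compatibility between the box product and the join. That compatibility is precisely the paper's Proposition \ref{prop:bi-spheres}, and your final step ($\phi\boxtimes\id$ restricting to $\phi|_{\uli 2}\star\id$, multiplicativity of degree under joins, bijectivity of $-\star\id$ on $\pi_0$) is verbatim how the paper deduces Theorem \ref{thm:main-spheres} from it.

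The one genuine issue sits inside your join lemma, though not quite where you located it: the interaction with $\Lambda$, completion and cofinality is indeed harmless, but contractibility of $X(\uli 1)$ and $Y(\uli 1)$ is \emph{not} a sufficient hypothesis, because the homotopy colimit computing $\Lambda(X\boxtimes^{\pre}Y)(\uli 2)$ is not the span of projections you wrote. Using the explicit formula for $\Lambda$ (realization of the evaluations at identity strings), the nondegenerate $1$-simplices contribute the summands $X(\uli 1\gets\uli 2)\times Y(\uli 2)$ and $X(\uli 2)\times Y(\uli 1\gets\uli 2)$, i.e.\ the values at the nondegenerate $1$-simplex $(\uli 1\gets\uli 2)$ of $N\fin$ --- spaces of \emph{collision morphisms} --- which are not determined by the object spaces (conservativity only controls simplices degenerate in the $\Delta$-direction). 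The colimit is an iterated homotopy pushout along the face maps $X(\uli 1\gets\uli 2)\to X(\uli 2)$ and $Y(\uli 1\gets\uli 2)\to Y(\uli 2)$, and it collapses to the join $X(\uli 2)\star Y(\uli 2)$ only when those face maps are weak equivalences. For $X=\config(M)$ one has $X(\uli 1\gets\uli 2)\simeq S(M)$, the sphere bundle of the normal bundle of the diagonal, and the face map is the tubular-neighborhood inclusion $S(M)\hookrightarrow\emb(\uli 2,M)$: this is an equivalence for $M=\RR^n$ (and persists under completion), but fails for general $M$ --- which is exactly why the paper's general statement, Proposition \ref{prop:bi-man}, produces a homotopy pushout of embedding spaces rather than a join, the join emerging only for Euclidean factors. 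Once you either add this hypothesis to your lemma or specialize it to $\config(\RR^n)^\wedge$ from the start, your argument goes through and coincides with the paper's proof.
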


The proof of this theorem relies on the following proposition.

\begin{prop}\label{prop:bi-spheres}
The following diagram
\[
	\begin{tikzpicture}[descr/.style={fill=white}, baseline=(current bounding box.base)]
	\matrix(m)[matrix of math nodes, row sep=2.5em, column sep=2.5em,
	text height=1.5ex, text depth=0.25ex]
	{
	\aut^h(\config(\RR^n)) \times \aut^h(\config(\RR^d)) & \aut^h(S^{n-1}) \times \aut^h(S^{d-1}) \\
	\aut^h(\config(\RR^{n+d})) & \aut^h(S^{n+d-1}) \\
	};
	\path[->,font=\scriptsize]
		(m-1-1) edge node [auto] {} (m-1-2)
		(m-2-1) edge node [auto] {} (m-2-2)
		(m-1-1) edge node [left] {$\boxtimes$} (m-2-1)
		(m-1-2) edge node [auto] {\textup{join}} (m-2-2);
	\end{tikzpicture}
\]
where the rows are the restrictions to the finite set $\uli 2$, commutes up to homotopy. An analogous homotopy commutative square exists if we replace the configuration categories and the spheres by their completions.
\end{prop}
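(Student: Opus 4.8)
The plan is to reduce both squares to a single computation: the Boardman--Vogt product, evaluated at the arity $\uli 2$, realizes the join of spheres, naturally in pairs of automorphisms. I would treat the uncompleted square first and deduce the completed one formally. To begin, note that the two horizontal maps are evaluation at $\uli 2$: since $\config(\RR^n)(\uli 2) = \mathrm{Conf}_2(\RR^n) \simeq S^{n-1}$ (recording the normalized difference vector) and $\config(\RR^n)(\uli 1) = \mathrm{Conf}_1(\RR^n) \simeq \ast$, a homotopy automorphism of $\config(\RR^n)$ restricts to one of $S^{n-1}$, and likewise in arities $d$ and $n+d$.

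The heart of the matter is to compute $(X \boxtimes^{\pre} Y)(\uli 2)$ for $X = \config(\RR^n)$ and $Y = \config(\RR^d)$. Unwinding the definition of $\boxfin$, the objects $\uli r \gets \uli k \to \uli s$ lying over $\uli k = \uli 2$, whose two legs are selfic surjections and for which $\uli 2 \to \uli r \times \uli s$ is injective, are exactly three: $(\uli r, \uli s) = (\uli 2, \uli 1)$, $(\uli 1, \uli 2)$ and $(\uli 2, \uli 2)$, the case $(\uli 1, \uli 1)$ being excluded by injectivity. Since $X \boxtimes^{\pre} Y$ is the left Kan extension of $(p_0, p_2)^*(X \times Y)$ along $p_1$, and since $X(\uli 1) \simeq Y(\uli 1) \simeq \ast$, the analysis of the relevant comma category of $\simp(\boxfin)$ reduces its value at $\uli 2$ to the homotopy colimit of the span
\[
X(\uli 2) \xleftarrow{\;\mathrm{pr}_1\;} X(\uli 2) \times Y(\uli 2) \xrightarrow{\;\mathrm{pr}_2\;} Y(\uli 2) \; ,
\]
the two projections arising from collapsing the outer legs $\uli s \to \uli 1$ and $\uli r \to \uli 1$. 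The homotopy colimit of this span is, by definition, the join $X(\uli 2) * Y(\uli 2) \simeq S^{n-1} * S^{d-1} \simeq S^{n+d-1}$.

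Because $\Lambda$ and the equivalence $\config(\RR^n) \boxtimes \config(\RR^d) \to \config(\RR^{n+d})$ of \cite{BoavidaWeissBV} are natural, the displayed span presents the arity-$\uli 2$ value of $\config(\RR^{n+d})$ as the join $S^{n-1} * S^{d-1}$, and I would check that this abstract join agrees with the geometric join coordinate on $\mathrm{Conf}_2(\RR^{n+d}) \simeq S^{n+d-1}$: a difference vector $(u,v) \in \RR^n \times \RR^d$ is recorded by its two normalized directions together with the interpolation angle between the two factors, the degenerate angles recovering the two projection maps. The whole construction is functorial in the pair $(X, Y)$, hence in a pair of automorphisms $(\phi, \psi)$: such a pair acts on the span through $\phi|_{\uli 2}$ and $\psi|_{\uli 2}$ on the outer terms and through $\phi|_{\uli 2} \times \psi|_{\uli 2}$ on the apex, so the induced self-map of the homotopy colimit is precisely the join $\phi|_{\uli 2} * \psi|_{\uli 2}$. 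As join is the right-hand vertical map, the uncompleted square commutes up to homotopy.

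For the completed square I would apply $\kk$-completion throughout. By Proposition \ref{prop:boxcommute} completion commutes with $\boxtimes$, and being applied objectwise it commutes with evaluation at $\uli 2$; being a left adjoint it preserves the homotopy pushout above, and by Proposition \ref{prop : completion commutes with product} it carries the apex to $X(\uli 2)^\wedge \times Y(\uli 2)^\wedge$. Thus the completed arity-$\uli 2$ box product is the completed join of $(S^{n-1})^\wedge$ and $(S^{d-1})^\wedge$, and the naturality argument applies verbatim. The main obstacle I anticipate lies in the second and third steps: carefully controlling the comma category of $\simp(\boxfin)$ over $\uli 2$ --- in particular confirming that the higher simplices and the conservatization $\Lambda$ leave the arity-$\uli 2$ homotopy type unchanged --- and matching the resulting abstract join with the geometric join coordinate on $\mathrm{Conf}_2(\RR^{n+d})$ compatibly with the Boardman--Vogt equivalence of \cite{BoavidaWeissBV}.
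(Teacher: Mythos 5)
Your overall strategy is the paper's own: compute the arity-$\uli 2$ value of $\boxtimes$, identify it with the join naturally in pairs of automorphisms, and handle the completed square via Propositions \ref{prop:boxcommute} and \ref{prop : completion commutes with product}. But there is a genuine gap at the central computational step, and it sits exactly where the paper does its real geometric work. Your claim that the comma-category analysis together with $X(\uli 1)\simeq Y(\uli 1)\simeq \ast$ reduces $(X\boxtimes Y)(\uli 2)$ to the homotopy colimit of the span $X(\uli 2)\gets X(\uli 2)\times Y(\uli 2)\to Y(\uli 2)$ with projection maps is not correct as stated. First, $(X\boxtimes^{\pre}Y)$ evaluated at the $0$-simplex $\uli 2$ is merely the disjoint union $X(\uli2)\times Y(\uli2)\sqcup X(\uli2)\times Y(\uli1)\sqcup X(\uli1)\times Y(\uli2)$: the fiber of $p_1$ over the $0$-simplex is discrete, so no gluing happens before $\Lambda$ is applied. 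In particular, the check you anticipate at the end --- that ``$\Lambda$ leaves the arity-$\uli 2$ homotopy type unchanged'' --- would fail outright: $\Lambda$ is what produces \emph{all} of the gluing. Second, when one computes $\Lambda(X\boxtimes^{\pre}Y)(\uli 2)$ via the realization formula of \cite[Section 8]{BoavidaWeissLong} (realize the simplicial space of values on the constant strings $\uli 2=\cdots=\uli 2$), the nondegenerate degree-one pieces are $X(\uli 1\gets\uli 2)\times Y(\uli 2)$ and $X(\uli 2)\times Y(\uli 1\gets\uli 2)$, where $X(\uli 1\gets\uli 2)$ is the value on the $1$-simplex $\uli 2\to\uli 1$, i.e.\ the space of \emph{collision} morphisms of the configuration category --- not $X(\uli 2)$. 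The actual output is therefore the iterated homotopy pushout of
\[
X(\uli 1)\times Y(\uli 2)\gets X(\uli 1\gets\uli 2)\times Y(\uli 2)\to X(\uli 2)\times Y(\uli 2)\gets X(\uli 2)\times Y(\uli 1\gets\uli 2)\to X(\uli 2)\times Y(\uli 1)\,,
\]
with the maps being vertex restrictions, not projections.

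The paper identifies $\config(M)(\uli 1\gets\uli 2)$ with $S(M)$, the unit sphere bundle of the normal bundle of the diagonal $M\subset M\times M$, the face map to $\emb(\uli 2,M)$ being the tubular-neighborhood inclusion. Your projection span only emerges after observing that for $M=\RR^n$ this inclusion $S(\RR^n)\hookrightarrow\emb(\uli 2,\RR^n)$ is a weak equivalence; this identification (and its naturality, which you also need in order for a pair of automorphisms $(\phi,\psi)$ to act on the colimit by $\phi|_{\uli 2}\ast\psi|_{\uli 2}$) is the missing content of your proposal. It cannot be a formal consequence of contractibility of the arity-one values: for general manifolds, the paper's Proposition \ref{prop:bi-man} shows that $(\config(M)\boxtimes\config(N))(\uli 2)$ is the homotopy pushout of the square (\ref{eq:square2pts}) of embedding-space \emph{inclusions}, which is not the join of the arity-two values --- the join answer is special to Euclidean space, precisely because of the equivalence above. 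Once this identification of the morphism spaces is inserted, the rest of your outline, including the treatment of the completed square, goes through.
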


\begin{proof}[Proof of Theorem \ref{thm:main-spheres} assuming Proposition \ref{prop:bi-spheres}]
The case $n = 1$ is trivial. For $n = 2$ and in the $\FF_p$-case, the map
\[
\aut^h(\config(\RR^2)^\wedge) \to \aut^h({(S^{1})}^\wedge)
\]
is surjective on $\pi_0$ because the compostion
\[\mathrm{Gal}(\overline{\QQ}/\QQ)\to\GT_{\FF_p}\to \pi_0 \aut^h(\config(\RR^2)^\wedge)\to \pi_0\aut^h({(S^{1})}^\wedge)=\ZZ_p^{\times}\]
is equal to the cyclotomic character, which is surjective. In the rational case, a similar argument holds with $\GT_\QQ$. In that case, the surjectivity of the cyclotomic character is proved in \cite[Proposition 5.3]{drinfeldquasi}.

For $n \geq 2$, we view $\RR^n$ as $\RR^{2} \times \RR^{n-2}$ and consider the commutative square
\[
	\begin{tikzpicture}[descr/.style={fill=white}]
	\matrix(m)[matrix of math nodes, row sep=2.5em, column sep=2.5em,
	text height=1.5ex, text depth=0.25ex]
	{
	\aut^h(\config(\RR^2)^\wedge) & \aut^h({(S^{1})}^\wedge)  \\ 
	\aut^h(\config(\RR^{n})^\wedge) & \aut^h((S^{n-1})^\wedge) \\
	};
	\path[->,font=\scriptsize]
		(m-1-1) edge node [auto] {} (m-1-2)
		(m-2-1) edge node [auto] {} (m-2-2)
		(m-1-1) edge node [left] {$- \boxtimes \textup{id}$} (m-2-1)
		(m-1-2) edge node [auto] {$- \star \textup{id}$} (m-2-2);
	\end{tikzpicture}
\]
from Proposition \ref{prop:bi-spheres}. The right-hand map is bijective on $\pi_0$ and so the result follows. 
\end{proof}

We have in fact just proved Theorem E and E$^\prime$ from the introduction.
\begin{proof}[Proof of Theorem E and E$^\prime$]
The $\GT_{\kk}$-action on $\config(\RR^2)^{\wedge}_{\kk}$ induces a $\GT_{\kk}$-action on $\config(\RR^n)^\wedge_{\kk}$ by taking the box product with $\config(\RR^{n-2})^{\wedge}_{\kk}$ equipped with the trivial action. Then the commutative square of Proposition \ref{prop:bi-spheres} implies that the restriction of this action to $(S^{n-1})^{\wedge}_{\kk}$ is indeed given by the cyclotomic character.
\end{proof}

\begin{rem}
The non-completed version of Theorem \ref{thm:main-spheres} also holds. This can be seen from the fact that the composition $O(n) \to \aut^h(\config(\RR^n)^\wedge) \to \aut^h(S^{n-1})$ is the identity on $\pi_0$, where $O(n)$ denotes the $n^{th}$ orthogonal group.
\end{rem}

\begin{cor}\label{cor:spheres}
Let $\mu$ be a homotopy automorphism of $(S^{d-1})^\wedge$. Then there exists a homotopy automorphism $\mu^\sharp$ of $\config(\RR^d)^\wedge$ making the diagram
\[
	\begin{tikzpicture}[descr/.style={fill=white}, baseline=(current bounding box.base)]
	\matrix(m)[matrix of math nodes, row sep=2.5em, column sep=2.5em,
	text height=1.5ex, text depth=0.25ex]
	{
	\aut^h(\config(\RR^n)^\wedge) & \aut^h({(S^{n-1})}^\wedge)  \\ 
	\aut^h(\config(\RR^{n+d})^\wedge) & \aut^h((S^{n+d-1})^\wedge) \\
	};
	\path[->,font=\scriptsize]
		(m-1-1) edge node [auto] {} (m-1-2)
		(m-2-1) edge node [auto] {} (m-2-2)
		(m-1-1) edge node [left] {$- \boxtimes \mu^\sharp$} (m-2-1)
		(m-1-2) edge node [auto] {$- \star \mu$} (m-2-2);
	\end{tikzpicture}
\]
commute up to homotopy.
\end{cor}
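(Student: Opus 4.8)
The plan is to obtain $\mu^\sharp$ directly from Theorem \ref{thm:main-spheres} and then to read off the desired square as a one-variable specialization of the (completed) square in Proposition \ref{prop:bi-spheres}. First I would apply Theorem \ref{thm:main-spheres}, with $n$ replaced by $d$, to the given class $[\mu]\in\pi_0\aut^h((S^{d-1})^\wedge)$. Since the restriction map $\aut^h(\config(\RR^d)^\wedge)\to\aut^h((S^{d-1})^\wedge)$ is surjective on $\pi_0$, I can choose a homotopy automorphism $\mu^\sharp$ of $\config(\RR^d)^\wedge$ whose restriction to the finite set $\uli 2$ is homotopic to $\mu$. This $\mu^\sharp$ will be the map claimed in the statement.

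Next I would specialize the completed version of the square in Proposition \ref{prop:bi-spheres}. That square involves the two-variable maps $\boxtimes$ and $\textup{join}$ together with the two restriction maps to $\uli 2$. I would precompose all four maps with the inclusion $\phi\mapsto(\phi,\mu^\sharp)$ of $\aut^h(\config(\RR^n)^\wedge)$ into the product $\aut^h(\config(\RR^n)^\wedge)\times\aut^h(\config(\RR^d)^\wedge)$ sitting in the top-left corner. Under this substitution the left vertical map becomes $-\boxtimes\mu^\sharp$, the top map becomes $\phi\mapsto(\phi|_{\uli 2},\mu^\sharp|_{\uli 2})$, and the right vertical map becomes $\psi\mapsto\psi\star(\mu^\sharp|_{\uli 2})$. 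Because $\mu^\sharp|_{\uli 2}\simeq\mu$ by the choice of $\mu^\sharp$, the right vertical map is homotopic to $-\star\mu$, while the top and bottom maps are exactly the restriction maps appearing in the statement of Corollary \ref{cor:spheres}.

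Finally I would observe that homotopy-commutativity of a two-variable square is preserved by restricting one of the variables to a fixed point: a homotopy witnessing commutativity of the ambient square restricts to a homotopy witnessing commutativity of the specialized square. This yields precisely the square of Corollary \ref{cor:spheres}, commuting up to homotopy. In this sense the statement is a genuine corollary, and there is no real obstacle beyond the two inputs already in hand: the substantive content is the surjectivity in Theorem \ref{thm:main-spheres} (resting on the Grothendieck-Teichm\"uller action and the cyclotomic character) and the homotopy-commutativity of Proposition \ref{prop:bi-spheres} relating the box product of configuration categories to the join of spheres. The one point deserving care is to keep the lift $\mu^\sharp$ chosen so that its restriction is honestly homotopic to $\mu$, ensuring the right-hand map is $-\star\mu$ rather than merely $-\star(\mu^\sharp|_{\uli 2})$ for some a priori different automorphism of $(S^{d-1})^\wedge$.
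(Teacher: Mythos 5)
Your proposal is correct and matches the paper's own (very terse) proof: the lift $\mu^\sharp$ comes from Theorem \ref{thm:main-spheres} applied in dimension $d$, and the square is the completed version of Proposition \ref{prop:bi-spheres} specialized by fixing $\mu^\sharp$ in the second variable. Your added care about keeping $\mu^\sharp|_{\uli 2}$ honestly homotopic to $\mu$ is exactly the point the paper leaves implicit.
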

\begin{proof}
The existence of the lift $\mu^\sharp$ of $\mu$ follows from Theorem \ref{thm:main-spheres}. The commutativity of the diagram is a consequence of Proposition \ref{prop:bi-spheres}.
\end{proof}

In the rest of the section, we prove Proposition \ref{prop:bi-spheres}. In fact, we will prove a more general statement which holds for any two manifolds $M$ and $N$. In order to state it, we need a definition.

\medskip
Let $A$ be the subcategory of $\simp(\fin)$ consisting of two objects $\uli 1$ and $\uli 2$ and two non-identity morphisms $\uli 1 \to \uli 2$ selecting the two elements of $\uli 2$. The restriction of $\config(M)$ to $A$ is simply a diagram of spaces
\[
\emb(\uli 2, M) \to M \times M
\]
where the map forgets each of the points.

Now, consider the square of inclusion maps (over the space $M \times M \times N \times N$):
 \begin{equation}\label{eq:square2pts}
	\begin{tikzpicture}[descr/.style={fill=white}, baseline=(current bounding box.base)]
	\matrix(m)[matrix of math nodes, row sep=2.5em, column sep=2.5em,
	text height=1.5ex, text depth=0.25ex]
	{
	\emb(\uli 2, M) \times \emb(\uli 2, N) & M \times M \times \emb(\uli 2, N)  \\
	\emb(\uli 2, M) \times N \times N & \emb(\uli 2, M \times N) \; .\\
	};
	\path[->,font=\scriptsize]
		(m-1-1) edge node [auto] {} (m-2-1)
		(m-2-1) edge node [auto] {} (m-2-2)
		(m-1-1) edge node [auto] {} (m-1-2)
		(m-1-2) edge node [auto] {} (m-2-2);
	\end{tikzpicture}
\end{equation}
This square is clearly a pushout and, in fact, also a homotopy pushout. Moreover, as we will explain below, the map from the homotopy pushout to the product $M \times M \times N \times N$ models the map
\[
\config(M) \boxtimes \config(N)(\uli 2) \to \config(M \times N)(\uli 1) \times \config(M \times N)(\uli 1)
\]
which is induced by the two inclusions $\uli 1 \to \uli 2$. Assuming this for the moment, we can define a map
\[
J : \End^h(\config(M)|_{A}) \times \End^h(\config(N)|_{A}) \to \End^h(\config(M) \boxtimes \config(N)|_{A}) \;
\]
that sends a pair $(f, g)$ consisting of maps $f(\uli 2) \to f(\uli 1) \times f(\uli 1)$ and $g(\uli 2) \to g(\uli 1) \times g(\uli 1)$ to the map
\[
f(\uli 2) \times g(\uli 1)^{\times 2} \; \coprod^h_{f(\uli 2) \times g(\uli 2)} \; f(\uli 1)^{\times 2} \times g(\uli 2) \longrightarrow f(\uli 1)^{\times 2} \times g(\uli 1)^{\times 2} \; .
\]
We can now state the promised generalization of Proposition \ref{prop:bi-spheres}.

\begin{prop}\label{prop:bi-man}
The following diagram
\[
	\begin{tikzpicture}[descr/.style={fill=white}, baseline=(current bounding box.base)]
	\matrix(m)[matrix of math nodes, row sep=2.5em, column sep=2.5em,
	text height=1.5ex, text depth=0.25ex]
	{
	\End^h(\config(M)) \times \End^h(\config(N)) & \End^h(\config(M)|_A) \times \End^h(\config(N)|_A) \\
	\End^h(\config(M) \boxtimes \config(N)) & \End^h(\config(M) \boxtimes \config(N)|_A) \\
	};
	\path[->,font=\scriptsize]
		(m-1-1) edge node [auto] {\textup{restr.}} (m-1-2)
		(m-2-1) edge node [auto] {\textup{restr.}} (m-2-2)
		(m-1-1) edge node [left] {$\boxtimes$} (m-2-1)
		(m-1-2) edge node [auto] {$J$} (m-2-2);
	\end{tikzpicture}
\]
commutes.
\end{prop}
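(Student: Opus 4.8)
The plan is to exhibit both composites around the square as the same homotopy endomorphism of $\config(M)\boxtimes\config(N)|_A$, by reducing everything to the functoriality of the constructions that define $\boxtimes$. The starting point is that $\boxtimes$ is built out of functorial operations — the pullback $(p_0,p_2)^*$, the objectwise product, the homotopy left Kan extension $(p_1)_!$, and the conservatization $\Lambda$. Hence for a pair $(\phi,\psi)$ the endomorphism $\phi\boxtimes\psi$ is nothing but the image of the product endomorphism $\phi\times\psi$ of $\config(M)\times\config(N)$ under these operations. Since $A$ consists only of the objects $\uli 1$ and $\uli 2$ and the two morphisms $\uli 1\to\uli 2$, the values of $\config(M)\boxtimes\config(N)$ on $A$, and the induced action of $\phi\boxtimes\psi$, are computed from $\config(M)$ and $\config(N)$ at $\uli 1$ and $\uli 2$ only, i.e. from $\config(M)|_A$ and $\config(N)|_A$. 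The first and main task is therefore to identify, naturally in the pair of $A$-diagrams, the restriction $(X\boxtimes Y)|_A$ with a pushout construction $B(X|_A,Y|_A)$.

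To produce this identification I would compute the homotopy left Kan extension at the two objects. At $\uli 1$ a direct computation gives $\config(M)\boxtimes\config(N)(\uli 1)\simeq M\times N$, with $\phi\boxtimes\psi$ acting as $\phi(\uli 1)\times\psi(\uli 1)$. At $\uli 2$ I would exhibit a cofinal subcategory of the indexing comma category whose associated homotopy colimit is exactly the homotopy pushout square (\ref{eq:square2pts}); the three non-terminal corners $\emb(\uli 2,M)\times\emb(\uli 2,N)$, $M\times M\times\emb(\uli 2,N)$ and $\emb(\uli 2,M)\times N\times N$ record the decomposition of a configuration of two distinct points in $M\times N$ according to whether its $M$- or its $N$-coordinates are distinct, and the two maps $\uli 1\to\uli 2$ induce the reference map to $M\times M\times N\times N$. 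This is the computation promised just before the statement; it follows from the explicit form of the left Kan extension together with the Mayer--Vietoris cover of $\emb(\uli 2,M\times N)$ by the two open sets where the $M$- resp. $N$-coordinates are distinct, and is the arity-two shadow of the additivity theorem of \cite{BoavidaWeissBV}. Under this identification the action of $\phi\times\psi$ is corner-wise: it is $\phi(\uli 2)\times\psi(\uli 2)$ on the gluing locus, $\phi(\uli 2)\times\psi(\uli 1)^{\times 2}$ and $\phi(\uli 1)^{\times 2}\times\psi(\uli 2)$ on the two halves, and $\phi(\uli 1)^{\times 2}\times\psi(\uli 1)^{\times 2}$ on the target.

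With this in hand the proposition is immediate. Writing $f=\phi|_A$ and $g=\psi|_A$, the corner-wise maps just listed are $f(\uli 2)\times g(\uli 2)$, $f(\uli 2)\times g(\uli 1)^{\times 2}$, $f(\uli 1)^{\times 2}\times g(\uli 2)$ and $f(\uli 1)^{\times 2}\times g(\uli 1)^{\times 2}$, so the endomorphism they induce on the homotopy pushout and on its reference map is by definition $J(f,g)=J(\phi|_A,\psi|_A)$. Thus the natural equivalence $(X\boxtimes Y)|_A\simeq B(X|_A,Y|_A)$ carries $(\phi\boxtimes\psi)|_A$ to $J(\phi|_A,\psi|_A)$, which is precisely the commutativity of the square.

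The main obstacle is the identification in the second step: pinning down a cofinal subcategory so that the homotopy left Kan extension at $\uli 2$ really is the pushout (\ref{eq:square2pts}), and doing so naturally enough that $\phi\times\psi$ is transported corner-wise. A related point requiring care is the conservatization $\Lambda$: one must check that passing from $\config(M)\boxtimes^{\pre}\config(N)$ to $\Lambda(\config(M)\boxtimes^{\pre}\config(N))$ does not alter the restriction to $A$, compatibly with the endomorphism action. Since $\config(M)$ and $\config(N)$ are already conservative and $A$ contains only the objects $\uli 1,\uli 2$ and the face-type maps between them — none of the $\Delta$-surjections that $\Lambda$ inverts — I expect this compatibility to hold; everything else is a matter of unwinding the functoriality of the operations defining $\boxtimes$.
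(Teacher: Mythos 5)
Your overall reduction---identify $(\config(M)\boxtimes\config(N))|_A$ with the homotopy pushout built from $(\ref{eq:square2pts})$, naturally enough that the induced endomorphism acts corner-wise, after which commutativity is formal---is the same skeleton as the paper's proof. But your route to that identification has a genuine gap, located exactly at the point you flag and then dismiss: the conservatization $\Lambda$. By the paper's definition, $(X\boxtimes^{\pre}Y)(\uli 2)$ is the \emph{disjoint union} of the three summands $X(\uli2)\times Y(\uli2)$, $X(\uli2)\times Y(\uli1)$ and $X(\uli1)\times Y(\uli2)$; the extension along $p_1$ only reindexes, so no cofinal-subcategory computation of $(p_1)_!$ can produce the pushout of $(\ref{eq:square2pts})$. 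The gluing is created precisely by $\Lambda$. Conservatization is a localization of the whole functor category, and its value at the $0$-simplex $\uli 2$ is computed (by the formula of \cite[Section 8]{BoavidaWeissLong} recalled in the paper) as the realization of the simplicial space $n \mapsto (X\boxtimes^{\pre}Y)(\uli 2 \xleftarrow{\id}\cdots\xleftarrow{\id}\uli 2)$, a homotopy coequalizer which genuinely differs from the degree-zero disjoint union. So your expectation that $\Lambda$ ``does not alter the restriction to $A$'' because $A$ contains no $\Delta$-surjections is false: if it were true, $(X\boxtimes Y)(\uli 2)$ would be a disjoint union rather than the pushout you need, and the proposition could not even be stated.

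Unwinding $\Lambda$ also shows why your claim that $(X\boxtimes Y)|_A$ is computed from $X|_A$ and $Y|_A$ alone cannot carry the proof: the coequalizer has degree-one terms $X(\uli1\gets\uli2)\times Y(\uli 2)$ and $X(\uli 2)\times Y(\uli1\gets\uli2)$, i.e.\ it involves the values on the surjection $1$-simplex, which is not in $A$. For general conservative $X$ and $Y$, the restriction $(X\boxtimes Y)|_A$ is therefore \emph{not} a functor of $(X|_A,Y|_A)$. What rescues the statement for configuration categories is geometric input absent from your proposal: the identification $\config(M)(\uli1\gets\uli2)\simeq S(M)$, the sphere bundle of the normal bundle of the diagonal (tubular neighborhood theorem), together with the homotopy pushout square with corners $S(M)$, $\emb(\uli2,M)$, $M$ and $M\times M$. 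These two facts allow one to rewrite the coequalizer, naturally in $(X,\phi)$ and $(Y,\psi)$, as the homotopy pushout of $(\ref{eq:square2pts})$, which involves only $A$-data; that rewriting is the actual content of the paper's proof. (Your Mayer--Vietoris cover of $\emb(\uli 2, M\times N)$ is instead the ingredient for comparing that pushout with $\config(M\times N)(\uli 2)$, i.e.\ for the additivity theorem of \cite{BoavidaWeissBV}, which is not what Proposition \ref{prop:bi-man} asks.)
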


So far, we have not used any particular construction of the conservatization functor $\Lambda$. To prove this proposition, we will need to use a formula for $\Lambda$ from \cite[Section 8]{BoavidaWeissLong}, which we now describe. As before, let $X$ and $Y$ be contravariant functors on $\simp(\fin)$. Given an object in $\simp(\fin)$, i.e. a string $\alpha := (k_1 \gets \dots \gets k_n)$, the value of $X \boxtimes^\pre Y$ at $\alpha$ is the space of triples $(\beta, x, y)$ where $\beta$ is an object of $\simp(\boxfin)$ such that $p_1(\beta) = \alpha$, $x \in X(p_0(\beta))$ and $y \in X(p_2(\beta))$. Then $X \boxtimes Y := \Lambda(X\boxtimes^\pre Y)$ at a finite set $\uli k$ is the realization of the simplicial space whose space of $n$-simplices is the evaluation
\[
(X \boxtimes^\pre Y)(\uli k \xleftarrow{\id} \dots \xleftarrow{\id} \uli k)
\]
at the string consisting of $(n-1)$ identity maps.
\begin{proof}[Proof of Proposition \ref{prop:bi-man}]

The commutativity of the square is immediate once we unravel $\config(M)\boxtimes\config(N)(\uli 2)$ as the homotopy pushout of the square (\ref{eq:square2pts}).

We specialize the formula above to $k = 2$. Since there is only one selfic map $\uli 2 \to \uli 2$ (the identity), the simplicial space defining $(X \boxtimes Y)(\uli k)$ has, in degree zero, three summands corresponding to objects $\beta = (\uli r \gets \uli k \to \uli s)$ in $\boxfin$ where $(r,s)$ is of the form $(2,2)$, $(2,1)$ and $(1,2)$. These summands are respectively $X(\uli2) \times Y(\uli2)$, $X(\uli2) \times Y(\uli 1)$ and $X(\uli 1) \times Y(\uli2)$. In degree $1$, we have two maps
\[
(2,1) \gets (2,2) \quad , \quad (1,2) \gets (2,2)
\]
and the three identity maps. (In $\boxfin$ there are no non-trivial morphisms to $(2,2)$.)
More generally, in degree $n$, there will be a summand for each string
\[
(r_1, s_1) \gets \dots \gets (r_n, s_n)
\] subject to the condition: if $r_i = 1$ for some $i$ then $r_j = 1$ for all $j \leq i$ and $s_k = 2$ for all $k$ (and the analogous condition with $r$ and $s$ exchanged). In particular, all the simplices of dimension two or higher in the simplicial space defining $(X \boxtimes Y)(\uli k)$ are degenerate. Therefore, $(X \boxtimes Y)(\uli k)$ is identified with the homotopy coequalizer of the diagram
\begin{center}
$X(\uli 1 \gets \uli 2) \times Y(\uli2) \coprod X(\uli2) \times Y(\uli1 \gets \uli2)$ \\
$\downdownarrows$ \\
$X(\uli1) \times Y(\uli2) \coprod X(\uli2) \times Y(\uli2) \coprod X(\uli2) \times X(\uli1) \; .$
\end{center}

Now we set $X = \config(M)$ and $Y = \config(N)$. Then $X(\uli1) = M$, $X(\uli2) = \emb(\uli 2, M)$ and similarly for $Y$. To describe $X(\uli 2 \to \uli 1)$, take the normal bundle of the diagonal inclusion $M \hookrightarrow M \times M$ and let $S(M)$ be its unit sphere bundle. It follows from the description of the morphism spaces in $\config(M)$ from \cite{BoavidaWeissLong} that ${X(\uli 1 \gets \uli 2) \simeq S(M)}$. Under this identification, the map $X(\uli 1 \gets \uli 2) \to X(\uli 2)$ corresponds the inclusion $S(M) \hookrightarrow \emb(\uli 2, M)$ given by the tubular neighborhood theorem. Similarly for $Y$.

These considerations lead us to identify $(X \boxtimes Y)(\uli k)$ with the standard formula computing the iterated homotopy pushout of the diagram
\[
	\begin{tikzpicture}[descr/.style={fill=white}, baseline=(current bounding box.base)]
	\matrix(m)[matrix of math nodes, row sep=2.5em, column sep=2.5em,
	text height=1.5ex, text depth=0.25ex]
	{
	\emb(\uli 2, M) \times N &  & \\
	\emb(\uli 2, M) \times S(N) & \emb(\uli 2, M) \times \emb(\uli 2, N) & \\
	& S(M) \times \emb(\uli 2, N)  & M \times \emb(\uli 2, N) \\
	};
	\path[->,font=\scriptsize]
		(m-2-1) edge node [auto] {} (m-1-1)
		(m-2-1) edge node [auto] {} (m-2-2)
		(m-3-2) edge node [left] {} (m-2-2)
		(m-3-2) edge node [auto] {} (m-3-3);
	\end{tikzpicture}
\]

 To complete the proof, we observe that there is a homotopy pushout
\[
	\begin{tikzpicture}[descr/.style={fill=white}, baseline=(current bounding box.base)]
	\matrix(m)[matrix of math nodes, row sep=2.5em, column sep=2.5em,
	text height=1.5ex, text depth=0.25ex]
	{
	\emb(\uli 2, M) & M \times M \\
	S(M) & M  \\
	};
	\path[->,font=\scriptsize]
		(m-1-1) edge node [auto] {} (m-1-2)
		(m-2-1) edge node [auto] {} (m-1-1)
		(m-2-1) edge node [left] {} (m-2-2)
		(m-2-2) edge node [auto] {} (m-1-2);
	\end{tikzpicture}
\]
and similarly for $N$. It follows that $(X \boxtimes Y)(\uli 2)$ maps by a weak equivalence to the homotopy pushout of $(\ref{eq:square2pts})$.
\end{proof}

\section{The action in homology}

Let $X$ be an $\infty$-operad in pro-spaces. Then the collection
\[
H_*(X, \kk) := \{H_*(X(n),\kk)\}_{n\in\mathbb{N}}
\] 
forms a strict operad in pro-$\kk$-vector spaces. Here $X(n)$ denotes the value of $X$ at the $n^{th}$ corolla. This is defined as follows. Let $T$ be the tree consisting of a corolla with $n$ leaves with a corolla with $m$-leaves grafted onto the $i^{th}$ leaf. The $\circ_i$ product is given by
\[
H_*X(n) \otimes H_*X(m) \to H_*(X(n) \times X(m)) \gets H_*X(T) \to H_*X(n+m-1)\]
where the backwards arrow is an isomorphism by the Segal condition, and the first arrow is the K\"unneth isomorphism.

\begin{prop}\label{prop:homologycompl}
Let $\kk$ be $\QQ$ or $\FF_p$. The canonical map 
\[
H_*(E_{n}, \kk) \to H_*((E_{n})^\wedge_\kk,\kk)
\]
is an isomorphism of operads in the category of pro-objects in graded vector spaces.
\end{prop}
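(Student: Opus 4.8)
The plan is to reduce the statement to an arity-wise one and then transport the (easy, essentially definitional) cohomological isomorphism to the desired homological one by linear duality. First I would observe that completion of $E_n$ as an $\infty$-operad in pro-spaces is computed arity-wise: by Proposition \ref{prop : completion commutes with product} the arity-wise completion preserves the Segal condition, so $(E_n)^\wedge_\kk(k) \simeq (E_n(k))^\wedge_\kk$ for each $k$, and the map in question is, arity-wise, the map induced on homology by the completion map $c_k : E_n(k) \to (E_n(k))^\wedge_\kk$. Since $c_k$ is natural in $k$ and completion commutes with products, the induced map automatically respects the operadic structure (which is assembled from the K\"unneth and Segal maps, both of which are preserved). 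It therefore suffices to show that, writing $(E_n(k))^\wedge_\kk = \{Z_i\}_{i\in I}$, the natural map of pro-objects in graded vector spaces
\[
H_*(E_n(k),\kk) \to \{H_*(Z_i,\kk)\}_{i}
\]
is an isomorphism.

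Next I would record two finiteness facts. The space $E_n(k) \simeq \config(\RR^n)(\uli k)$ is a configuration space, hence of finite type, so $H_*(E_n(k),\kk)$ and $H^*(E_n(k),\kk)$ are finite-dimensional in each degree and mutually linearly dual. On the other hand, being a fibrant object of $\ProS_\kk$, the pro-space $(E_n(k))^\wedge_\kk$ may be taken levelwise in $\S^{cofin}_\kk$; by Proposition \ref{prop : characterization of F_p cofin} (resp.\ Proposition \ref{prop : characterization of Q cofin}) each $Z_i$ is truncated with finite $p$-group (resp.\ $\QQ$-unipotent) homotopy and unipotent $\pi_1$-action, and a short induction up its finite, principal Postnikov tower shows that $H_*(Z_i,\kk)$ is finite-dimensional in each degree. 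Consequently every graded vector space in sight is degreewise finite-dimensional, so that $H^*(Z_i,\kk) = H_*(Z_i,\kk)^\vee$ and degreewise linear duality furnishes an honest anti-equivalence between the pro- and ind-objects of finite-dimensional graded vector spaces that arise.

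Then I would invoke the defining property of completion. The map $c_k$ is a weak equivalence in $\ProS_\kk$, i.e.\ a $\kk$-equivalence, and hence induces an isomorphism on continuous cohomology,
\[
\colimsub{i} H^*(Z_i,\kk) = H^*\big((E_n(k))^\wedge_\kk,\kk\big) \xrightarrow{\ \cong\ } H^*(E_n(k),\kk).
\]
By the finiteness above this colimit isomorphism of ind-objects is exactly the degreewise linear dual of the map $H_*(E_n(k),\kk) \to \{H_*(Z_i,\kk)\}$. Since the duality functor is an anti-equivalence on degreewise-finite objects and carries the former isomorphism to the latter map, the map of homology pro-objects is itself an isomorphism. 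Reassembling over all arities $k$ gives the claimed isomorphism of operads in pro-objects in graded vector spaces.

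The step I expect to be the main obstacle is precisely the passage from the cohomological (ind-object) isomorphism back to the homological (pro-object) isomorphism: this is \emph{not} formal for arbitrary pro-spaces and rests entirely on the degreewise finite-dimensionality of the homology of the $\kk$-cofinite spaces $Z_i$, which is what makes the pro/ind linear duality an anti-equivalence in our situation. Establishing that finiteness is the one genuinely non-formal input, and it is most delicate in the rational case, where one must control the rational homology of classifying spaces of $\QQ$-unipotent groups and of unipotent local systems; everything else is naturality together with the definition of a $\kk$-equivalence.
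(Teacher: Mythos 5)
Your proposal follows essentially the same route as the paper's proof: reduce to an arity-wise statement, note that the completion map is a cohomology isomorphism by the very definition of a $\kk$-equivalence, and transport this back to homology via the anti-equivalence between pro-objects in finite-dimensional vector spaces and (ind-finite-dimensional) vector spaces given by the topological dual. As in the paper, the crux is the degreewise finite-dimensionality of the homology of the levels of the completion, and you correctly identify this as the only non-formal input.

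There is, however, a gap in your finiteness step in the rational case. You assert that each level $Z_i$ of $(E_n(k))^\wedge_\QQ$ is truncated with $\QQ$-unipotent homotopy groups and deduce degreewise finite homology by Postnikov induction. But Proposition \ref{prop : characterization of Q cofin} only controls the \emph{path components} of spaces in $\S_{\QQ}^{cofin}$; it places no bound on the number of components. Indeed $K(\QQ,0)$, an infinite discrete space, belongs to $C$, so a level $Z_i$ may well have $H_0(Z_i,\QQ)$ infinite-dimensional, and the claimed levelwise finiteness fails as stated. (In the $\FF_p$ case the characterization does include ``finitely many components,'' so there your argument goes through.) The paper closes exactly this hole: since $E_n(k)$ is connected, each map $E_n(k)\to Z_i$ factors through a single path component $\tilde{Z}_i$, and the pro-space $\{\tilde{Z}_i\}_{i}$ is again a model for the completion (it is still levelwise in $\S_{\kk}^{cofin}$ and the map is still a cohomology isomorphism); the Postnikov-tower induction, reduced to the classical finiteness of $H_*(K(\kk,n),\kk)$, then applies to the connected spaces $\tilde{Z}_i$. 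With this one repair — at precisely the point your own summary flags as delicate — your proof coincides with the paper's.
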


\begin{proof}
It suffices to check that this map is an isomorphism arity-wise. We claim, more generally, that for any connected space of finite $\kk$-type (i.e such that the $\kk$-homology groups are all finite dimensional), the canonical map
\begin{equation}\label{map}
H_i(X,\kk)\to H_i(X^{\wedge},\kk)
\end{equation}
is an isomorphism of pro-vector spaces. By definition, this is true after taking the topological dual (where for $\{V_i\}_{i\in I}$ a pro-vector space, its topological dual is the colimit of the diagram $i\mapsto \Hom_{\kk}(V_i,\kk)$). It is classical that the category of vector spaces is the ind-category of finite dimensional vector spaces. Consequently, the topological dual is an equivalence of categories between the pro-category of finite dimensional vector spaces and the opposite of the category of vector spaces. As such, it suffices to show that both the source and the target of the map (\ref{map}) are pro-finite dimensional vector spaces. This is true by assumption for the source.

Let $\{X_i\}_{i\in I}$ be a model for the completion $X^{\wedge}$. Since $X$ is connected, each of the maps $X\to X_i$ factors through one path component of $X_i$ that we denote by $\tilde{X}_i$. The map of pro-space $X\to\{\tilde{X}_i\}_{i\in I}$ is also a model for the completion of $X$. (Indeed, the target is also a pro-object in $\S_{\FF_p}^{cofin}$ or in $\S_{\QQ}^{cofin}$ and it is easy to check that the map also induces an isomorphism in cohomology.) We may thus assume without loss of generality that each $X_i$ in the completion of $X$ is connected. Accordingly, all we have to show is that a connected space $Y$ in $\S_{\kk}^{cofin}$ has finite $\kk$-type. This can be reduced to the case of $K(\kk,n)$ with $n>0$ using the fact that $Y$ is the limit of a finite tower of principal fibrations whose fibers are of the form $K(\kk,n)$ with $n>0$ (see the proof of Propositions \ref{prop : characterization of F_p cofin} and \ref{prop : characterization of Q cofin}). The fact that the spaces $K(\kk,n)$ have finite $\kk$-type is a classical computation.
\end{proof}

Let $\alpha$ be a unit in $\QQ$. Viewing $\alpha$ as an automorphism of $(S^{n-1})^\wedge_{\QQ}$, for $n \geq 2$, we may lift it to a homotopy automorphism of ${(E_n)}^\wedge_{\QQ}$ along the restriction map to arity two. Such a lift exists by Theorem \ref{thm:main-spheres} and Proposition \ref{prop:complete-op-conf}. Similarly, if $\alpha$ is a unit in $\FF_p$, we can first lift it to a unit in $\ZZ_p$ and then lift this to a homotopy automorphism $\alpha^\sharp$ of ${(E_n)}^\wedge_{\FF_p}$. 

\begin{prop}\label{prop : action on homology}
Let $\alpha$ be a unit in $\kk$. For $n \geq 2$, let $\alpha^\sharp$ be a homotopy automorphism of $E_n^\wedge$ which lifts $\alpha$. Then $\alpha^\sharp$ acts on $H_{k(n-1)}(E_n, \kk)$ as multiplication by $\alpha^k$.
\end{prop}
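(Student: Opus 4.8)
The plan is to reduce the computation to the action on the two arity-two generators of the homology operad, and then propagate that action through operadic composition. First I would transport the problem to the uncompleted operad: being a homotopy automorphism of the $\infty$-operad $E_n^\wedge$, the map $\alpha^\sharp$ induces an automorphism of the strict operad $H_*(E_n^\wedge,\kk)$, and by Proposition \ref{prop:homologycompl} the canonical map $H_*(E_n,\kk)\to H_*(E_n^\wedge,\kk)$ is an isomorphism of operads. Thus $\alpha^\sharp$ determines an operad automorphism of $H_*(E_n,\kk)$, and it is this automorphism that we must identify.

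Next I would recall the classical description of the homology operad, due to F. Cohen. For $n\geq 2$, $H_*(E_n,\kk)$ is the $n$-Poisson operad: it is generated, as an operad, by a product $\mu\in H_0(E_n(2))$ and a bracket $\lambda\in H_{n-1}(E_n(2))$. Over $\FF_p$ this remains valid because the integral homology of the configuration spaces of $\RR^n$ is torsion-free, so the arity-two generation over $\ZZ$ descends to $\FF_p$ by base change. Since homological degree is additive under operadic composition with $\deg\mu=0$ and $\deg\lambda=n-1$, any operadic composite of $b$ copies of $\lambda$ and arbitrarily many copies of $\mu$ lies in degree $b(n-1)$; as $n\geq 2$, this forces $b=k$ for a composite lying in degree $k(n-1)$. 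Hence $H_{k(n-1)}(E_n,\kk)$ is spanned by composites involving exactly $k$ brackets.

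I would then compute the action on the generators. Restricted to arity two and transported along the weak equivalence between the arity-two part of $E_n^\wedge$ and $(S^{n-1})^\wedge$, the automorphism $\alpha^\sharp$ is the self-equivalence of $(S^{n-1})^\wedge$ classified by $\alpha$. Since $H_0(E_n(2))=\kk\cdot\mu$ and $H_{n-1}(E_n(2))=\kk\cdot\lambda$ are one-dimensional, $\alpha^\sharp$ acts on each by a scalar: it fixes $\mu$, because a self-equivalence of a connected space is the identity on $H_0$, and it multiplies $\lambda$ by $\alpha$, this scalar being exactly the one through which the identification $\pi_0\aut^h((S^{n-1})^\wedge)\cong\kk^\times$ is defined. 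Because $\alpha^\sharp$ respects operadic composition, it therefore scales a composite built from exactly $k$ brackets and any number of products by $\alpha^k$; as such composites span $H_{k(n-1)}(E_n,\kk)$, the claim follows.

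The step I expect to be the main obstacle is the structural input on $H_*(E_n,\kk)$ in positive characteristic, namely generation in arity two together with the fact that homological degree counts exactly the number of brackets, and the verification that $\alpha^\sharp$ scales $\lambda$ by precisely $\alpha$ (and not by some other lift or representative of its class). Both are controlled by the torsion-freeness of the homology of configuration spaces and by the very definition of the arity-two identification, but they are exactly the points where a characteristic-$p$ subtlety could enter.
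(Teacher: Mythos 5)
Your proof is correct and takes essentially the same route as the paper: the paper's argument likewise invokes the fact that $H_*(E_n,\kk)$ is the Poisson operad, generated in arity two, reduces the claim to the action on $H_*(E_n(2),\kk)$, and observes that there it is multiplication by $\alpha$ in degree $n-1$ ``by construction'' of the lift. Your write-up merely makes explicit the details the paper leaves implicit (the identification via Proposition \ref{prop:homologycompl}, torsion-freeness over $\FF_p$, and the bracket-counting degree argument), all of which are sound.
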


\begin{proof}
The homology of the little disks operad $H_*(E_n,\kk)$ is the Poisson operad, which is generated by operations in arity two. Therefore, it is enough to check that the induced action of $\alpha^\sharp$ in homology coincides with multiplication by $\alpha^k$ in $H_{k(n-1)}(E_n(2), \kk)$. But this holds by construction.
\end{proof}

\section{Purity implies formality}\label{section : purity implies formality}

Proposition \ref{prop : action on homology} constructs actions on the little disks operad which are \emph{pure}. In this section, we explain what this means and review some purity-implies-formality results from \cite{CiriciHorel}.

\medskip
By a chain complex we mean a homological non-negatively graded chain complex.

\begin{defn}
Let $N$ be a positive integer. A dg-operad $P$ (i.e an operad in chain complexes over a field) is called \emph{$N$-formal} if $P$ is connected to $H_*(P)$ via a zig-zag of maps of operads that induce isomorphisms in homology in degree $\leq N$.
\end{defn}

There is an alternative way of understanding $N$-formality using the truncation functor. This functor sends a chain complex $C_*$ to the chain complex $t_{\leq N}C_*$ whose value in degree $i$ is
\begin{itemize}
\item $C_i$ if $i<N$,
\item $C_N/d(C_{N+1})$ if $i=N$,
\item $0$ if $i>N$.
\end{itemize}
There is an obvious natural map $C_*\to t_{\leq N}C_*$, and the homology of $t_{\leq N}C_*$ coincides with the homology of $C_*$ in degrees $\leq N$ and is zero in degrees $>N$. Also, the functor $t_{\leq N}$ is lax symmetric monoidal. That is, there exists a natural transformation
\[t_{\leq N}C_*\otimes t_{\leq N}D_*\to t_{\leq N}(C\otimes D)\]
satisfying associativity and commutativity conditions. The following is straightforward (see \cite[Remark 5.4]{CiriciHorel}).

\begin{prop}
A dg-operad operad $P$ is $N$-formal if and only if the truncation $t_{\leq N}P$ is formal.
\end{prop}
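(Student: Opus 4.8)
The plan is to prove both implications by exploiting the fact that the truncation functor $t_{\leq N}$ is lax symmetric monoidal, so that it sends operads to operads and maps of operads to maps of operads. First I would record this basic observation: given any dg-operad $P$ with structure maps $P(k)\otimes P(j)\to P(k+j-1)$, applying $t_{\leq N}$ and precomposing with the lax monoidal structure transformation $t_{\leq N}P(k)\otimes t_{\leq N}P(j)\to t_{\leq N}(P(k)\otimes P(j))$ equips $t_{\leq N}P$ with the structure of a dg-operad, and this assignment is functorial in maps of operads (because the lax monoidal structure is natural). In particular $t_{\leq N}$ carries a zig-zag of operad maps to a zig-zag of operad maps.

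For the forward direction, suppose $P$ is $N$-formal, so there is a zig-zag of operad maps connecting $P$ to $H_*(P)$ inducing homology isomorphisms in degrees $\leq N$. Applying $t_{\leq N}$ to this entire zig-zag yields a zig-zag of operad maps connecting $t_{\leq N}P$ to $t_{\leq N}H_*(P)$. The key point is that a map of chain complexes which is a homology isomorphism in degrees $\leq N$ becomes a quasi-isomorphism after applying $t_{\leq N}$ (since the homology of $t_{\leq N}C$ agrees with that of $C$ in degrees $\leq N$ and vanishes above). Thus every map in the truncated zig-zag is a quasi-isomorphism. Finally, since $H_*(P)$ has zero differential, $t_{\leq N}H_*(P)$ is itself its own homology, and one identifies it with $H_*(t_{\leq N}P)$; this exhibits $t_{\leq N}P$ as quasi-isomorphic to its homology, i.e. formal.

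For the converse, suppose $t_{\leq N}P$ is formal, so there is a zig-zag of quasi-isomorphisms of operads connecting $t_{\leq N}P$ to $H_*(t_{\leq N}P)=t_{\leq N}H_*(P)$. To assemble an $N$-formality zig-zag for $P$ itself, I would splice together three pieces: the natural operad map $P\to t_{\leq N}P$ coming from the truncation natural transformation $C_*\to t_{\leq N}C_*$, which is by construction a homology isomorphism in degrees $\leq N$; the given formality zig-zag for $t_{\leq N}P$; and the corresponding natural map $H_*(P)\to t_{\leq N}H_*(P)$, again a homology isomorphism in degrees $\leq N$ but now going the other way. Concatenating these produces a zig-zag of operad maps from $P$ to $H_*(P)$ in which every map induces an isomorphism on homology in degrees $\leq N$, which is precisely $N$-formality.

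The main obstacle I anticipate is purely bookkeeping rather than conceptual: one must verify that the natural transformation $C_*\to t_{\leq N}C_*$ is compatible with the lax monoidal structures, so that $P\to t_{\leq N}P$ is genuinely a map of \emph{operads} and not merely a levelwise map of chain complexes, and likewise that the identification $H_*(t_{\leq N}P)\cong t_{\leq N}H_*(P)$ is an isomorphism of operads. These coherences are what make the statement ``straightforward,'' and the cited \cite[Remark 5.4]{CiriciHorel} presumably records exactly this; I would simply check that the relevant squares relating the comparison maps, the operadic composition, and the lax monoidal structure commute, which follows formally from the naturality and associativity axioms of a lax symmetric monoidal functor.
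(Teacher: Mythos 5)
Your proof is correct and is exactly the standard argument that the paper has in mind: the paper itself offers no proof, merely calling the statement straightforward and citing \cite[Remark 5.4]{CiriciHorel}, which records the same reasoning. Both directions — truncating the zig-zag using the lax symmetric monoidal structure of $t_{\leq N}$, and splicing the natural operad maps $P\to t_{\leq N}P$ and $H_*(P)\to t_{\leq N}H_*(P)$ around the formality zig-zag — are the intended ones, and the coherence checks you flag (monoidality of $C_*\to t_{\leq N}C_*$ and the operad identification $H_*(t_{\leq N}P)\cong t_{\leq N}H_*(P)$) are indeed the only remaining routine verifications.
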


\begin{defn}
Let $\kk$ be a field, let $u$ be a unit of $\kk$. Let $\alpha\neq 0$ be a rational number. We consider pairs $(C_*,\sigma)$ of a chain complex $C_*$ over $\kk$ with degreewise finite dimensional homology and $\sigma:C_*\to C_*$ an endomorphism. We call such a pair $(C_*,\sigma)$ \emph{$\alpha$-pure relative to $u$} (or just $\alpha$-pure if $u$ is implicit) if the following conditions are satisfied for all $n$:
\begin{enumerate}
\item If $\alpha n$ is not an integer, then $H_n(C_*)=0$.
\item If $\alpha n$ is an integer, then the unique eigenvalue of the endomorphism $H_n(\sigma)$ is $u^{\alpha n}$.
\end{enumerate}
\end{defn}

\begin{expl}
Here is a \emph{pure} situation. Let $k$ be the algebraic closure of a finite field of characteristic $p'$ with $p'\neq p$. Consider the chain complex $C_{*}=(C^{*}_{et}(\mathbb{P}^n_k,\FF_p))^ {\vee}$, that is, the linear dual of the \'etale cochains on the projective space of dimension $n$ on $k$. There is an action of the Frobenius automorphism $\varphi$ of $k$ on $C_*$. Then the pair $(C_*,\varphi)$ is $1/2$-pure relative to $p'$. This amounts to the following:
\begin{enumerate}
\item The homology of $C_*$ is concentrated in even degrees.
\item The action of $\varphi$ on $H_{2i}(C_*)$ is given by mutltiplication by $(p')^i$.
\end{enumerate}
\end{expl}

Now, we fix $u\in \kk^\times$ once and for all. Let $h\in\mathbb{N}\cup\infty$ be the order of $u$ in $\kk^\times$. We denote by $\cat{Ch}_*(\kk)^{\alpha}$ the category of pairs $(C_*,\sigma)$ that are $\alpha$-pure. This category is symmetric monoidal.

\begin{thm}\label{theo : purity implies formality}
Let $\alpha$ be a positive non-zero rational number with $\alpha<h$. Let $P$ be an operad in $\cat{Ch}_*(\kk)^{\alpha}$. Then $P$ is $N$-formal for $N= \lfloor \frac{h-1}{\alpha}\rfloor$. Moreover, one can choose the formality quasi-isomorphisms to be natural in $P$.
\end{thm}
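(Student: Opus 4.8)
The plan is to reduce to the earlier reformulation that $P$ is $N$-formal if and only if the truncation $t_{\leq N}P$ is formal, and then to build the formality quasi-isomorphisms by splitting $P$ along the eigenvalues of $\sigma$. First I would record the arithmetic that makes the range $N=\lfloor (h-1)/\alpha\rfloor$ work: if $0\le n\le N$ and $\alpha n$ is an integer, then $0\le \alpha n\le h-1$, so the eigenvalues $u^{\alpha n}$ attached to homology in these degrees are pairwise distinct (two of them agree only if $\alpha n\equiv \alpha n'\bmod h$, hence $\alpha n=\alpha n'$ and $n=n'$, since $\alpha\neq 0$). Thus, in the range $\le N$, the eigenvalue $u^{\alpha n}$ remembers the homological degree~$n$.

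The key structural input is a weight decomposition. Since $\sigma$ is a chain endomorphism commuting with the operadic composition, its primary (generalized eigenspace) subspaces $P(r)^{(\lambda)}=\{x:(\sigma-\lambda)^k x=0\text{ for some }k\}$ are subcomplexes, and the decomposition $P(r)=\bigoplus_{\lambda}P(r)^{(\lambda)}$ (together with the irreducible-factor parts for $\lambda\notin\kk$, which will be irrelevant) turns $P$ into an operad internal to the symmetric monoidal category of $\kk^\times$-graded chain complexes; the grading is multiplicative and the composition satisfies $P^{(\lambda)}\otimes P^{(\mu)}\to P^{(\lambda\mu)}$ by $\sigma$-equivariance. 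Purity says exactly that $H_n(P(r)^{(\lambda)})=0$ unless $\alpha n$ is an integer and $\lambda=u^{\alpha n}$. Combined with the first paragraph, in the range $\le N$ the homology of the grade-$\lambda$ summand is concentrated in a single degree $n(\lambda)$, namely the unique $n$ with $u^{\alpha n}=\lambda$, while grades not of the form $u^{\alpha n}$ are acyclic there.

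With this in hand, the formality map comes from grade-dependent good truncation. On the grade-$\lambda$ summand I would apply the good truncation to its expected degree $n(\lambda)$. Because $n(\lambda)$ is additive in the multiplicative grading (if $\lambda=u^{\alpha a}$ and $\mu=u^{\alpha b}$ then $n(\lambda\mu)=a+b$) and matches the K\"unneth shift, this grade-wise truncation $\tau^{\mathrm{gr}}_{\ge}$ is lax symmetric monoidal, hence carries the operad $P$ to an operad and the canonical maps to operad maps. This produces a natural zig-zag $P \hookleftarrow \tau^{\mathrm{gr}}_{\ge}P \twoheadrightarrow H_*(P)$, where the first map is the grade-wise inclusion and the second is the grade-wise projection onto homology in the expected degree; both are quasi-isomorphisms in degrees $\le N$ precisely because each grade has homology concentrated in one degree there. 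Restricting to $t_{\le N}$ yields the formality of $t_{\le N}P$, and since every ingredient (primary decomposition, truncation) is canonical in the pair $(P,\sigma)$, the resulting quasi-isomorphisms are natural in $P$.

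The step I expect to be the main obstacle is the existence of the primary decomposition on the chain level: this needs $\sigma$ to act locally finitely on $P$, which is not guaranteed, since only the homology is assumed degreewise finite dimensional. I would handle this by first replacing $P$ functorially by a quasi-isomorphic operad on which $\sigma$ acts locally finitely (a degreewise-finite or minimal model). Alternatively, and more in the spirit of splitting the Postnikov tower, one can avoid chain-level eigenspaces altogether and run the argument as obstruction theory over the finite-dimensional homology operad: the obstructions to formality in a given range are $\sigma$-equivariant classes lying in groups on which $\sigma$ acts, by purity, through a quotient of two \emph{distinct} eigenvalues $u^{\alpha n}$, $u^{\alpha n'}$; equivariance forces such a class to be fixed while the weight mismatch forces it to be scaled by $u^{\alpha n-\alpha n'}\neq 1$, so it vanishes, and the splitting extends one degree further. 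A secondary point needing care is the verification that $\tau^{\mathrm{gr}}_{\ge}$ is genuinely monoidal (and strong monoidal in the pure range), but this reduces to the additivity of the expected degrees already noted.
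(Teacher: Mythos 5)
Your main line of argument is essentially the paper's own: the paper also splits along generalized eigenspaces of $\sigma$ (packaged as a symmetric monoidal functor from the category $\cat{TMod}$ of finite-dimensional vector spaces equipped with an automorphism whose eigenvalues are powers of $u$, to $\ZZ/(h)$-graded vector spaces), and then proves that the forgetful functor from pure $\ZZ/(h)$-graded chain complexes to $\cat{Ch}_*(\kk)$ is $N$-formal as a lax symmetric monoidal functor via an explicit, natural, grade-dependent truncation zig-zag (this is \cite[Proposition 5.13]{CiriciHorel}). So your weight decomposition plus $\tau^{\mathrm{gr}}$ zig-zag is exactly the intended mechanism, and you correctly located the real difficulty: $\sigma$ is only an endomorphism of a chain complex whose \emph{homology} is degreewise finite dimensional, so there is no chain-level primary decomposition in general.

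The gap lies in your two proposed fixes for this difficulty. The first fix (replace $P$ functorially by a quasi-isomorphic operad on which $\sigma$ acts locally finitely, via a ``degreewise-finite or minimal model'') assumes precisely the statement that is hardest to prove, and as stated it is not available: over $\FF_p$ the categories of $\kk[\Sigma_n]$-modules are not semisimple, so there are no transfer arguments or functorial strict minimal models for dg-operads, and a \emph{strict}, symmetric-monoidally functorial replacement of the pair $(P,\sigma)$ is exactly what one cannot build directly. The paper's resolution is genuinely $\infty$-categorical: it invokes \cite[Theorem 4.7]{CiriciHorel}, which asserts that the inclusion $\cat{Ch}_*(\cat{TMod})^{\alpha}\to\cat{Ch}_*(\kk)^{\alpha}$ is an equivalence of symmetric monoidal $\infty$-categories. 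This identifies $P$ with an operad in $\cat{Ch}_*(\cat{TMod})^{\alpha}$ only up to coherent homotopy, so the truncation argument yields an equivalence of $t_{\leq N}P$ with $t_{\leq N}H_*(P)$ as $\infty$-operads in chain complexes, and a final appeal to Hinich's rectification theorem \cite{Hinich} is needed to convert this into a zig-zag of quasi-isomorphisms of strict dg-operads. Your second fix (obstruction theory over the homology operad, with vanishing forced by the eigenvalue mismatch) is a plausible alternative in outline, but it is only a sketch: it presupposes a deformation/obstruction theory for dg-operads over $\FF_p$ in which the obstructions to splitting the Postnikov tower live in functorial groups carrying a $\sigma$-action, and setting that up with the symmetric group actions in play is of comparable difficulty to the step it is meant to replace. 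So the proposal is correct in outline, but the bridge from homology-level purity to a chain-level (or $\infty$-categorical) eigenspace splitting --- the actual content of the paper's appeal to \cite[Theorem 4.7]{CiriciHorel} and to rectification --- is missing.
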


\begin{proof}
We sketch a proof. The details are worked out in \cite{CiriciHorel}. 

Let $\cat{TMod}$ be the category of pairs $(V,\sigma)$ where $V$ is a finite dimensional vector space and $\sigma$ is an automorphism whose eigenvalues are powers of $u$. This is a symmetric monoidal abelian category. We denote by $\cat{Ch}_*(\cat{TMod})^{\alpha}$ the category of chain complexes in $\cat{TMod}$ that satisfy the following two conditions
\begin{enumerate}
\item If $\alpha n$ is not an integer, then $H_n(C_*)=0$.
\item If $\alpha n$ is an integer, then the unique eigenvalue of the endomorphism $H_n(\sigma)$ is $u^{\alpha n}$.
\end{enumerate}
Clearly, $\cat{Ch}_*(\cat{TMod})^{\alpha}$ is a subcategory of $\cat{Ch}_*(\kk)^{\alpha}$ (since a chain complex in finite dimensional vector spaces has degreewise finite dimensional homology).

Now, consider the category $gr^{(h)}\cat{Vect}$ of $\ZZ/(h)$-graded vector spaces (recall that $h$ is the order of $u$ in $\kk^\times$). By splitting a vector space as a direct sum of generalized eigenspaces, we get a symmetric monoidal functor $\cat{TMod}\to gr^{(h)}\cat{Vect}$. This induces a symmetric monoidal functor
\[\cat{Ch}_*(\cat{TMod})^{\alpha}\to\cat{Ch}_*(gr^{(h)}\cat{Vect})^{\alpha}\]
where the target category is the full subcategory spanned by chain complexes $C_*$ in $\ZZ/(h)$-graded vector spaces satisfying the following two conditions:
\begin{enumerate}
\item If $\alpha n$ is not an integer, then $H_n(C_*)=0$.
\item If $\alpha n$ is an integer, then $H_n(C_*)$ is concentrated in degree $\alpha n$ modulo $h$.
\end{enumerate}
We claim that the forgetful functor
\[U:\cat{Ch}_*(gr^{(h)}\cat{Vect})^{\alpha}\to\cat{Ch}_*(\kk)\]
is $N$-formal as a lax symmetric monoidal functor (for $N$ as in the statement of the theorem). This means that there is a zig-zag of natural lax monoidal transformations connecting $U$ to $H_*\circ U$. The proof of this fact is elementary linear algebra. It is done by producing an explicit such zig-zag (see \cite[Proposition 5.13]{CiriciHorel}). From this fact, we immediatly deduce that the forgetful functor
\[ \cat{Ch}_*(\cat{TMod})^{\alpha}\to\cat{Ch}_*(\kk)\]
is also $N$-formal as a lax symmetric monoidal functor since it is the composite of a lax symmetric monoidal functor with a formal lax symmetric monoidal functor. Therefore, any operad $P$ in $\cat{Ch}_*(\cat{TMod})^{\alpha}$ is sent to an $N$-formal dg-operad.

This is almost the complete proof of the theorem. The only issue is that our operad $P$ does not live in $\cat{Ch}_*(\cat{TMod})^{\alpha}$ but in the larger category $\cat{Ch}_*(\kk)^{\alpha}$. The healing observation is that the inclusion $\cat{Ch}_*(\cat{TMod})^{\alpha}\to \cat{Ch}_*(\kk)^{\alpha}$ is an equivalence of symmetric monoidal $\infty$-categories. This is the content of Theorem 4.7 in \cite{CiriciHorel}. This implies that the above argument works at the cost of having to replace dg-operads by $\infty$-operads (i.e. operads in the symmetric monoidal $\infty$-category of chain complexes). More precisely, this proves that $t_{\leq N}P$ and $t_{\leq N}H_*(P)$ are equivalent as $\infty$-operads in chain complexes. This equivalence can be rigidified to a zig-zag of quasi-isomorphisms between strict dg-operads using the results of Hinich \cite{Hinich}.
\end{proof}

We will also need a cohomological version of Theorem \ref{theo : purity implies formality}. For simplicity, we will only state it in characteristic zero, though it holds more generally. For $\kk$ a field of characteristic zero, we denote by $\cat{Ch}^*(\kk)$ the category of cohomologically graded chain complexes. For $u$ a unit in $\kk$ of infinite order and $\alpha$ a positive rational number different from zero, we denote by $\cat{Ch}^*(\kk)^{\alpha}$ the category whose objects are pairs $(C^*,\sigma)$ where $C^*$ is an object in $\cat{Ch}^*(\kk)$ with finite dimensional cohomology in each degree and $\sigma$ is an endomorphism of $C^*$ satisfying the following conditions.
\begin{enumerate}
\item If $\alpha n$ is not an integer, then $H^n(C^*)=0$.
\item If $\alpha n$ is an integer, then the unique eigenvalue of the endomorphism $H^n(\sigma)$ is $u^{\alpha n}$.
\end{enumerate}

\begin{thm}\label{thm : cohomological formality}
Let $u$ and $\alpha$ be as above. Then the forgetful functor
\[U:\cat{Ch}^*(\kk)^{\alpha}\to \cat{Ch}^*(\kk)\]
is formal as a lax monoidal functor. That is, there exists a zig-zag of quasi-isomorphisms of lax monoidal functors connecting $U$ and $H^*\circ U$.
\end{thm}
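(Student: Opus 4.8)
The plan is to run the proof of Theorem \ref{theo : purity implies formality} in cohomological guise, taking advantage of the fact that $u$ has infinite order. Since $h=\infty$, the relevant category of graded vector spaces is genuinely $\ZZ$-graded, i.e. $gr^{(\infty)}\cat{Vect}=gr^{\ZZ}\cat{Vect}$ rather than $gr^{(h)}\cat{Vect}$, and there is no truncation to perform, so we obtain \emph{full} formality rather than $N$-formality. As in \emph{loc.\ cit.}, I would first reduce to the graded situation. The generalized-eigenspace decomposition gives a symmetric monoidal functor $\cat{TMod}\to gr^{\ZZ}\cat{Vect}$ recording, for each $(V,\sigma)$, the splitting of $V$ by which power $u^{w}$ of $u$ the eigenvalue is; this induces a symmetric monoidal functor $\cat{Ch}^*(\cat{TMod})^{\alpha}\to\cat{Ch}^*(gr^{\ZZ}\cat{Vect})^{\alpha}$, where the target is the category of cochain complexes, graded by a \emph{weight} $w\in\ZZ$, whose cohomology in cohomological degree $n$ is concentrated in weight $\alpha n$ (and vanishes unless $\alpha n\in\ZZ$).

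The heart of the matter is to show that the forgetful functor $U:\cat{Ch}^*(gr^{\ZZ}\cat{Vect})^{\alpha}\to\cat{Ch}^*(\kk)$ is formal as a lax monoidal functor, by writing down an explicit zig-zag. Since the differential preserves weight, a complex $C$ splits as a direct sum of its weight pieces $C_w$, and purity says that $C_w$ has cohomology concentrated in the single cohomological degree $n(w):=w/\alpha$ (and is acyclic when $w/\alpha\notin\ZZ$). I would therefore define a weightwise good truncation $\tau$ by setting $(\tau C)_w=\tau_{\le n(w)}(C_w)$, the subcomplex agreeing with $C_w$ below degree $n(w)$, equal to the cocycles $Z^{n(w)}$ in degree $n(w)$, and zero above, and equal to $0$ on weights with $w/\alpha\notin\ZZ$. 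The inclusion $\tau C\hookrightarrow C$ and the further good truncation $\tau C\twoheadrightarrow H^*(C)$ are both quasi-isomorphisms, giving the zig-zag $U\xleftarrow{\sim}U\circ\tau\xrightarrow{\sim}H^*\circ U$. The crucial point making this monoidal is that the cutoff is \emph{additive}, $n(w_1+w_2)=n(w_1)+n(w_2)$, because $n(w)=w/\alpha$ is linear: a product of elements of degrees $\le n(w_1)$ and $\le n(w_2)$ has degree $\le n(w_1+w_2)$, and in the extremal degree both factors are forced to lie in $Z^{n(w_i)}$, so their product is again a cocycle. Hence $\tau$ is lax symmetric monoidal and both natural transformations are monoidal, using that $H^*$ is strong monoidal via the K\"unneth isomorphism over the field $\kk$. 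Composing with the functor of the previous paragraph shows that $\cat{Ch}^*(\cat{TMod})^{\alpha}\to\cat{Ch}^*(\kk)$ is formal as a lax monoidal functor.

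Finally, I would upgrade this from $\cat{Ch}^*(\cat{TMod})^{\alpha}$ to all of $\cat{Ch}^*(\kk)^{\alpha}$ by invoking the cohomological, characteristic-zero analog of \cite[Theorem 4.7]{CiriciHorel}: the inclusion $\cat{Ch}^*(\cat{TMod})^{\alpha}\hookrightarrow\cat{Ch}^*(\kk)^{\alpha}$ is an equivalence of symmetric monoidal $\infty$-categories, every pure pair $(C^*,\sigma)$ being quasi-isomorphic, compatibly with tensor products, to one whose endomorphism is semisimple with eigenvalues powers of $u$. Transporting the zig-zag of lax monoidal functors along this equivalence and rigidifying it to a strict zig-zag of quasi-isomorphisms via Hinich \cite{Hinich} then gives the theorem. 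I expect the main obstacle to be precisely this last equivalence in the cohomological characteristic-zero setting: producing a functorial, tensor-compatible semisimplification of $(C^*,\sigma)$ is where the hypotheses that $\kk$ has characteristic zero and that the cohomology is finite dimensional in each degree are genuinely used, whereas the monoidal bookkeeping for $\tau$, though slightly fiddly, is elementary linear algebra.
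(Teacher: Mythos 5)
Your proposal is correct and takes essentially the same route as the paper, whose own proof consists of saying it is ``completely analogous'' to that of Theorem \ref{theo : purity implies formality} and deferring details to \cite[Proposition 6.1]{CiriciHorel}: reduction to weight-graded complexes via generalized eigenspaces, an explicit weightwise truncation zig-zag (untruncated, hence full formality, precisely because $u$ has infinite order), and then the symmetric monoidal $\infty$-categorical equivalence together with rectification to pass from the semisimple subcategory $\cat{Ch}^*(\cat{TMod})^{\alpha}$ to all of $\cat{Ch}^*(\kk)^{\alpha}$. You in fact spell out the truncation zig-zag and the additivity-of-the-cutoff argument that the paper leaves to the cited reference, and you correctly flag the last equivalence as the step carrying the real weight of the hypotheses.
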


\begin{proof}
The proof is completely analogous to the proof above. We refer the reader to Proposition 6.1 of \cite{CiriciHorel} for more details.
\end{proof}

This theorem immediately implies that any algebra over a discrete operad in $\cat{Ch}^*(\kk)^\alpha$ is formal and moreover that the formality quasi-isomorphisms are natural.

\section{Absolute formality}

We now come to the proof of Theorem A.

\begin{thm}\label{thm:main}
The little $n$-disks operad is formal over $\QQ$ and $(n-1)(p-2)$-formal over $\FF_p$.
\end{thm}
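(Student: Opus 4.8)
The plan is to exploit the pure homotopy automorphism of the completed operad built in the previous sections and then feed it into the purity-implies-formality machine of Theorem \ref{theo : purity implies formality}. The cases $n=1$ and $p=2$ are degenerate: there the target $(n-1)(p-2)$ equals $0$, and $0$-formality holds trivially since $t_{\leq 0}$ of both $C_*(E_n,\kk)$ and $H_*(E_n,\kk)$ is the commutative operad. I therefore assume $n\geq 2$ (and $p\geq 3$ in the modular case) throughout.

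First I would fix the unit. Over $\FF_p$, choose $u$ to be a generator of the cyclic group $\FF_p^\times$, so that its order takes the maximal value $h=p-1$; over $\QQ$, choose $u$ to be any rational of infinite order (say $u=2$), so that $h=\infty$. In either case, after first lifting $u$ to a unit of $\ZZ_p$ in the modular case, Theorem \ref{thm:main-spheres} together with Proposition \ref{prop:complete-op-conf} produces a homotopy automorphism $u^\sharp$ of $(E_n)^\wedge_\kk$ lifting the self-map of $(S^{n-1})^\wedge$ that acts by $u$ on $H_{n-1}$. By Proposition \ref{prop : action on homology}, $u^\sharp$ acts on $H_{k(n-1)}(E_n,\kk)$ as multiplication by the scalar $u^k$, where I identify $H_*((E_n)^\wedge_\kk,\kk)$ with $H_*(E_n,\kk)$ via Proposition \ref{prop:homologycompl}.

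Next I would apply the (continuous) chains functor $C_*(-,\kk)$, which is lax symmetric monoidal, to the $\infty$-operad $(E_n)^\wedge_\kk$ equipped with its automorphism $u^\sharp$, obtaining an $\infty$-operad in chain complexes together with an endomorphism $\sigma$. Because the homology operad $H_*(E_n,\kk)$ (the $n$-Poisson operad) is concentrated in degrees divisible by $n-1$, and $\sigma$ acts as $u^k$ in degree $k(n-1)$, the pair $(C_*(E_n(k),\kk),\sigma)$ is $\alpha$-pure relative to $u$ for $\alpha=\tfrac{1}{n-1}$: condition (1) of purity holds since $H_m=0$ whenever $(n-1)\nmid m$, and condition (2) holds since the unique eigenvalue in degree $m=k(n-1)$ is $u^{k}=u^{\alpha m}$. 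Moreover, the completion map induces a quasi-isomorphism $C_*(E_n,\kk)\xrightarrow{\sim} C_*((E_n)^\wedge_\kk,\kk)$ of $\infty$-operads, again by Proposition \ref{prop:homologycompl}.

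Finally I would invoke Theorem \ref{theo : purity implies formality}. Since $\alpha=\tfrac{1}{n-1}<h$ (this is exactly where the choice of $u$ of maximal order enters), the operad is $N$-formal for
\[
N=\left\lfloor \frac{h-1}{\alpha}\right\rfloor=(h-1)(n-1).
\]
Over $\QQ$ we have $h=\infty$, yielding full formality; over $\FF_p$ we have $h=p-1$, yielding $N=(p-2)(n-1)=(n-1)(p-2)$. Transporting this conclusion back along the quasi-isomorphism above establishes the claimed (partial) formality of $C_*(E_n,\kk)$. The main obstacle I anticipate is the passage from the homotopy-coherent action of $u^\sharp$ on the pro-space-level $\infty$-operad to an endomorphism of an operad genuinely living in $\cat{Ch}_*(\kk)^{\alpha}$ (or its $\infty$-categorical refinement): one must check that continuous chains of the completion produce an $\infty$-operad of the correct homotopy type, that the scalar eigenvalue computation survives this passage, and that the rigidification to strict dg-operads supplied by the last clause of Theorem \ref{theo : purity implies formality} is compatible with the comparison quasi-isomorphism.
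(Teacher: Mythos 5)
Your proposal is correct and follows essentially the same route as the paper: choose a unit of maximal order, lift it to a homotopy automorphism $u^\sharp$ of $(E_n)^\wedge_\kk$ via Theorem \ref{thm:main-spheres} and Proposition \ref{prop:complete-op-conf}, verify $\tfrac{1}{n-1}$-purity using Proposition \ref{prop : action on homology}, and conclude by Theorem \ref{theo : purity implies formality} with $N=(h-1)(n-1)$. The technical obstacle you flag at the end --- turning the automorphism of the pro-space-level $\infty$-operad into an endomorphism of an honest dg-operad compatibly with the comparison to $C_*(E_n,\kk)$ --- is precisely what the paper's Proposition \ref{prop:chains-dend} (the functor $C'_*$ and the natural equivalence $C_*(P,\kk)\simeq C'_*(P^\wedge,\kk)$ for finite-type $P$, proved in the second appendix) supplies.
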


Let $\kk$ be $\QQ$ or $\FF_p$ for $p$ a prime. Let $P$ be an operad in spaces. By K\"unneth, applying singular chains arity-wise produces a dg-operad $C_*(P,\kk)$. There are technicalities involved in making a similar statement when $P$ is a dendroidal object, since dendroidal objects in $\cat{Ch}_*$ do not model infinity-operads in chain complexes. This is not an essential issue, however, because of the proposition below.

\medskip
Let $\cat{Op}_{\infty} \ProS_\kk$ be the $\infty$-category of $\infty$-operads in $\ProS_\kk$ (as in Definition \ref{def : infinity operads in pro-spaces}) and let $\cat{OpCh}_*(\kk)$ be the $\infty$-category of dg-operads (that is, the localization of the category of dg-operads at the quasi-isomorphisms).

\begin{prop}\label{prop:chains-dend}
There exists a functor $C^\prime_* : \cat{Op}_{\infty}\ProS_\kk \to \cat{OpCh}_*$ together with a natural map
\[
C_*(P,\kk) \to C^\prime_*(P^\wedge,\kk)
\]
for $P$ an operad in spaces, which is a weak equivalence of dg-operads if $P(n)$ is of finite type for each $n$.
\end{prop}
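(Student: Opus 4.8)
The plan is to build $C'_*$ out of a single lax symmetric monoidal functor $\ProS_\kk \to \cat{Ch}_*(\kk)$, rather than by applying chains arity-wise to a dendroidal nerve. The latter recipe is exactly what one must avoid, since dendroidal objects in $\cat{Ch}_*$ do not model $\infty$-operads in chain complexes. By contrast, a lax symmetric monoidal functor automatically preserves operad (and algebra) structures, so it induces a functor $\cat{Op}_{\infty}(\ProS_\kk) \to \cat{Op}_{\infty}(\cat{Ch}_*)$; composing with Hinich's rectification equivalence $\cat{Op}_{\infty}(\cat{Ch}_*) \simeq \cat{OpCh}_*$ \cite{Hinich} gives the functor $C'_*$ we want. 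Here $\ProS_\kk$ carries the cartesian symmetric monoidal structure and $\cat{Ch}_*(\kk)$ the tensor product.

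To construct the lax monoidal functor I would first work with cochains, where the relevant colimits are well-behaved. For a pro-space $X = \{X_i\}_{i\in I}$ in $\ProS_\kk$ I set $C^*(X,\kk) := \colimsub{i \in I^\op} C^*(X_i,\kk)$, a filtered colimit since $I^\op$ is filtered. Over a field this is strong symmetric monoidal: the K\"unneth map is a levelwise isomorphism, filtered colimits commute with $\otimes$, and completion commutes with finite products by Proposition \ref{prop : completion commutes with product}; together these make $C^*(X,\kk)\otimes C^*(Y,\kk)\to C^*(X\times Y,\kk)$ an equivalence. Applying $\kk$-linear duality $(-)^\vee$, an exact contravariant lax symmetric monoidal functor (over a field, $H^n(C^\vee)\cong (H_n C)^\vee$), then yields a \emph{covariant} lax symmetric monoidal continuous chains functor $C'_*(X,\kk):=(C^*(X,\kk))^\vee$ from $\ProS_\kk$ to $\cat{Ch}_*(\kk)$. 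On a constant pro-space of finite type it recovers ordinary chains.

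For the comparison map, the completion unit $P\to P^\wedge$ (with $P$ a constant pro-space) induces arity-wise restriction maps $C^*(P^\wedge,\kk)\to C^*(P,\kk)$ on continuous cochains; dualizing these and composing with the evaluation map $C_*(P,\kk)\to (C^*(P,\kk))^\vee$ produces the natural transformation $C_*(P,\kk)\to C'_*(P^\wedge,\kk)$. In each arity it induces on homology the composite of the evaluation isomorphism (valid under finite type) with the $\kk$-linear dual of the continuous-cohomology comparison $H^*(P(n)^\wedge,\kk)\to H^*(P(n),\kk)$; the latter is an isomorphism when $P(n)$ is of finite type, which is precisely what the proof of Proposition \ref{prop:homologycompl} establishes. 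Hence the comparison is an arity-wise quasi-isomorphism of dg-operads under the finite type hypothesis.

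The hard part will not be the homological comparison but the operadic bookkeeping. I would need to reconcile the dendroidal model of $\infty$-operads used throughout the paper with the framework in which the principle ``a lax symmetric monoidal functor induces a functor on $\infty$-operads'' is formulated, and then apply Hinich's rectification to land in $\cat{OpCh}_*$. The crucial point is to verify that continuous chains is lax symmetric monoidal \emph{coherently}, and not merely arity-wise, so that the induced structure on $C'_*(P^\wedge)$ carries the correct homotopy-coherent $\Sigma_n$-equivariance --- the very data that the naive dendroidal-chains object would fail to record.
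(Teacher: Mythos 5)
Your proposal shares the paper's outer skeleton --- produce a lax symmetric monoidal $\infty$-functor from $\ProS_\kk$ to chain complexes, use it to push $\infty$-operads (encoded as $\mathcal{O}$-algebras in a cartesian symmetric monoidal $\infty$-category) into operads in $\cat{Ch}_*$, rectify via Hinich, and compare with $C_*(P,\kk)$ under a finite-type hypothesis --- but your core functor is genuinely different: you take the linear dual of continuous cochains, $X\mapsto \bigl(\colimsub{i}C^*(X_i,\kk)\bigr)^\vee$, whereas the paper takes levelwise singular chains into the localized pro-category $\Pro(\cat{Ch}_*)_\kk$ and then applies the materialization $\Mat$. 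Your comparison step (biduality $C_*\to (C^*)^{\vee\vee}$ together with the fact that completion preserves continuous cohomology) is in fact more direct than the paper's Lemma \ref{lem:chains completion}, which goes through Postnikov towers of chain complexes and a diagram of adjoints.

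The genuine gap sits exactly at the point you defer as ``the hard part'': symmetric monoidality. The argument you give --- levelwise K\"unneth quasi-isomorphism, filtered colimits commute with $\otimes$, duality is lax symmetric monoidal --- cannot produce a lax \emph{symmetric} monoidal structure from the strict chain-level cross product, because the cross product on singular cochains (the dual Alexander--Whitney map) is not $\Sigma_2$-equivariant; it is commutative only up to coherent higher homotopy. Moreover, after dualizing, the strict structure map points the wrong way, $\bigl(C^*(X\times Y)\bigr)^\vee\to\bigl(C^*(X)\otimes C^*(Y)\bigr)^\vee$, so your lax structure only exists after inverting K\"unneth equivalences, i.e.\ it is inherently an $\infty$-categorical coherence problem, not bookkeeping. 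Without a coherent $E_\infty$-refinement of cochains (Berger--Fresse/Mandell, or the identification of $C^*(X,\kk)$ with the mapping spectrum $\map(\Sigma^\infty_+X,H\kk)$ in $H\kk$-modules), the object $C'_*(P^\wedge)$ carries no $\Sigma_n$-equivariant operad structure at all --- and in characteristic $p$ no strictly commutative cochain model exists, so this input is unavoidable for $\kk=\FF_p$. The paper sidesteps the issue entirely by working covariantly with chains: the Eilenberg--Zilber shuffle map $C_*(X_i)\otimes C_*(Y_j)\to C_*(X_i\times Y_j)$ is strictly symmetric and always a quasi-isomorphism, so levelwise chains is an honest strong symmetric monoidal functor into $\Pro(\cat{Ch}_*)_\kk$, and the only laxity needed is that of $\Mat$, which is automatic for the right adjoint of a strong symmetric monoidal functor (\cite[Corollary 7.3.2.7]{luriehigher}). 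So your route can be completed, but only by importing $E_\infty$-cochain technology that your sketch neither supplies nor cites; alternatively, replacing your dual-of-cochains functor by $\Mat\circ C_*$ makes the rest of your argument go through.
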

We postpone the proof to the second appendix.

\begin{proof}[Proof of Theorem \ref{thm:main}]
For $n=1$ the little $n$-disks operad is obviously formal over both $\FF_p$ and $\QQ$ so we assume that $n\geq 2$. Let $u$ be a unit of $\kk$ of infinite order if $\kk=\QQ$, or a unit of order $p-1$ if $\kk=\FF_p$. By Proposition \ref{prop : action on homology} and the paragraph preceeding it, we can find an automorphism $u^{\sharp}$ of $C'_*(E_n^\wedge, \kk)$ making it $1/(n-1)$-pure, i.e. so that $C'_*(E_n^\wedge, \kk)$ is an operad in $\cat{Ch}(\kk)^{\alpha}$ for $\alpha = \frac{1}{n-1}$. By Theorem \ref{theo : purity implies formality}, it follows that $C'_*(E_n^\wedge, \kk)$ is formal in the desired range. We conclude that $C_*(E_n,\kk)$ is formal in the same range since, by Proposition \ref{prop:chains-dend}, we have a weak equivalence
$
C_*(E_n,\kk)\simeq C'_*(E_n^\wedge, \kk) \; .$
\end{proof}

\begin{rem}
The appeal to Proposition \ref{prop:chains-dend} is not necessary when $n\geq 3$. Indeed, in that case, by Proposition \ref{prop: completion of nilpotent spaces}, the dendroidal space $\RR\Mat(E_n^{\wedge})$ is a $\kk$-localization of $E_n$. Using Theorem \ref{thm: cm}, we can strictify this dendroidal space to an equivalent operad in spaces. Then, we can apply the ordinary chain functor to get a dg-operad that is equivalent to $C_*(E_n,\kk)$. Moreover, any automorphism of $E_n^\wedge$ will induce an automorphism of that dg-operad. The rest of the proof of Theorem \ref{thm:main} is unchanged.
\end{rem}

\begin{rem}
It seems that something like Proposition \ref{prop:chains-dend} is implicitly used in Petersen's paper \cite{Petersen}. Indeed, in the proof of the main Theorem of \cite{Petersen}, one needs to extract a dg-operad from an operad in pro-spaces, namely the classifying space of the completion of $\PaB$. The naive guess would be to first take the limit of the pro-spaces and then apply the standard singular chain functor but we believe that the resulting object would not be equivalent to the chains on the little $2$-disks operad. This is because the configuration spaces of points in $\mathbb{R}^2$ are not good in the sense of Bousfield-Kan.
\end{rem}

\section{Relative formality}

\begin{defn}
A map $u:P\to Q$ of dg-operads over a field $\kk$ is said to be \emph{formal} is it is connected to the map $H_*(u)$ by a zig-zag of weak equivalences in the arrow category of dg-operads.
\end{defn}

We may now prove Theorem B.

\begin{thm}
Let $n$ and $d$ be two positive integers with $d\geq 2$. Let $E_n\to E_{n+d}$ be a map between little disks operads. Then
\begin{enumerate}
\item[($\QQ$)] the map of dg-operads $C_*(E_n,\QQ)\to C_*(E_{n+d},\QQ)$ is formal,
\end{enumerate}
and
\begin{enumerate}
\item[(${\FF_p}$)] the map of dg-operads $C_*(E_n,\FF_p)\to C_*(E_{n+d},\FF_p)$ is $N$-formal, 
\end{enumerate}
where $N = \Delta(p-2)$ and $\Delta$ is the greatest common divisor of $n-1$ and $d$.
\end{thm}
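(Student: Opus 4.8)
The plan is to run the argument of the absolute case (Theorem~\ref{thm:main}) \emph{relatively}, i.e. for the arrow $E_n \to E_{n+d}$ rather than a single operad, exploiting the fact recorded in Theorem~\ref{theo : purity implies formality} that the formality quasi-isomorphisms produced by purity can be chosen naturally in the operad. The first task is to isolate the correct common purity slope. Since $H_*(E_n,\kk)$ is concentrated in degrees divisible by $n-1$ and $H_*(E_{n+d},\kk)$ in degrees divisible by $n+d-1$, an object of the arrow category can be $\alpha$-pure only if $\alpha(n-1)$ and $\alpha(n+d-1)$ — equivalently $\alpha(n-1)$ and $\alpha d$ — are integers, and the smallest positive such rational is $\alpha = 1/\Delta$ with $\Delta = \gcd(n-1,d)$. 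With this choice the range delivered by Theorem~\ref{theo : purity implies formality} is $N = \lfloor (h-1)/\alpha \rfloor = \Delta(p-2)$ for $\kk=\FF_p$ (where $h=p-1$ is the order of $u$), and $N=\infty$ for $\kk=\QQ$, which is exactly the assertion.

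Next I would build compatible automorphisms. Fix a unit $u$ of $\kk$ (of order $p-1$ if $\kk=\FF_p$, of infinite order if $\kk=\QQ$) and set $\alpha=1/\Delta$. Using Theorem~\ref{thm:main-spheres} and Proposition~\ref{prop:complete-op-conf}, lift the scalar $\lambda := u^{(n-1)/\Delta}$ on $(S^{n-1})^\wedge$ to a homotopy automorphism $\phi$ of $E_n^\wedge$ (for $n=1$ one simply takes $\phi=\mathrm{id}$, as $\lambda=1$ there); by Proposition~\ref{prop : action on homology}, $\phi$ acts on $H_{k(n-1)}(E_n,\kk)$ as $\lambda^k=u^{k(n-1)/\Delta}$, so $C'_*(E_n^\wedge,\kk)$ is $\alpha$-pure. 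All exponents are integers precisely because $\Delta$ divides both $n-1$ and $d$. Now apply Corollary~\ref{cor:spheres} with $\mu := u^{d/\Delta}$ on $(S^{d-1})^\wedge$: it produces a lift $\mu^\sharp$ on $\config(\RR^d)^\wedge$ such that $-\boxtimes\mu^\sharp$ corresponds under restriction to the join $-\star\mu$. Setting $\Psi := \phi\boxtimes\mu^\sharp$, the additivity equivalence $\config(\RR^n)^\wedge\boxtimes\config(\RR^d)^\wedge\simeq\config(\RR^{n+d})^\wedge$ of Theorem~\ref{thm:con-pro-box} makes $\Psi$ a homotopy automorphism of $E_{n+d}^\wedge$, whose restriction to arity two is $\lambda\star\mu$. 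Under $\tilde H_{n+d-1}(S^{n+d-1})\cong\tilde H_{n-1}(S^{n-1})\otimes\tilde H_{d-1}(S^{d-1})$ this acts as the product $\lambda\mu=u^{(n+d-1)/\Delta}$, so by Proposition~\ref{prop : action on homology} the complex $C'_*(E_{n+d}^\wedge,\kk)$ is $\alpha$-pure as well.

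The crux is the homotopy-commutativity of the square relating $\phi$, $\Psi$ and the structure map. Writing the inclusion $\RR^n=\RR^n\times\{0\}\hookrightarrow\RR^n\times\RR^d$, the additivity equivalence identifies the induced $E_n^\wedge\to E_{n+d}^\wedge$ with $\mathrm{id}\boxtimes\iota$, where $\iota:\config(\{0\})^\wedge\to\config(\RR^d)^\wedge$ is induced by the point inclusion. Since $\iota$ selects the contractible unit configuration and $\mu^\sharp$ is a homotopy automorphism of operads, $\mu^\sharp\circ\iota\simeq\iota$; bifunctoriality of $\boxtimes$ then gives $\Psi\circ(\mathrm{id}\boxtimes\iota)=\phi\boxtimes(\mu^\sharp\circ\iota)\simeq\phi\boxtimes\iota=(\mathrm{id}\boxtimes\iota)\circ\phi$, so the square commutes up to homotopy. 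I expect this identification — turning the geometric stabilization map into a box product against the unit, compatibly with the chosen automorphisms — to be the main obstacle, since it is where Proposition~\ref{prop:bi-man}, Corollary~\ref{cor:spheres} and the additivity theorem must be assembled coherently at the level of completed configuration categories.

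Finally I would conclude. Applying the chain functor of Proposition~\ref{prop:chains-dend} to the completed commuting square identifies $C_*(E_n,\kk)\to C_*(E_{n+d},\kk)$ with $C'_*(E_n^\wedge,\kk)\to C'_*(E_{n+d}^\wedge,\kk)$, now equipped with the compatible endomorphisms $\phi$ and $\Psi$. This arrow is an operad in the arrow category of $\cat{Ch}_*(\kk)^\alpha$ for $\alpha=1/\Delta$. Because the zig-zag of formality quasi-isomorphisms in Theorem~\ref{theo : purity implies formality} is natural in the operad, it applies to this $[1]$-indexed diagram and yields a zig-zag of weak equivalences in the arrow category connecting the map to $H_*$ of the map, in homological degrees $\leq N=\Delta(p-2)$ (all degrees when $\kk=\QQ$). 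This is exactly the asserted $N$-formality, respectively formality, of $E_n\to E_{n+d}$.
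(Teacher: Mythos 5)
Up to the single issue flagged below, your argument is the paper's own proof: the same purity slope $\alpha=1/\Delta$ relative to a generator $u$ of $\kk^\times$, the same pair of automorphisms (your $\phi$ and $\Psi=\phi\boxtimes\mu^\sharp$ are the paper's $u^\sharp$ and $u^\sharp\boxtimes\mu^\sharp$, with exponents $(n-1)/\Delta$ and $d/\Delta$ on the two sphere factors), the same identification of the stabilization map with $\mathrm{id}\boxtimes\iota$ for $\iota\colon\config(\RR^0)^\wedge\to\config(\RR^d)^\wedge$ in order to obtain the homotopy-commutative square (the paper justifies $\mu^\sharp\circ\iota\simeq\iota$ by the contractibility of the space of maps $\config(\RR^0)^\wedge\to\config(\RR^d)^\wedge$, which is the precise form of your ``selects the contractible unit configuration'' step), the same special treatment of $n=1$ via $\phi=\mathrm{id}$, and the same appeal to the naturality clause of Theorem~\ref{theo : purity implies formality} applied to the arrow $C'_*(E_n^\wedge,\kk)\to C'_*(E_{n+d}^\wedge,\kk)$, followed by Proposition~\ref{prop:chains-dend} to transfer the conclusion to $C_*(E_n,\kk)\to C_*(E_{n+d},\kk)$.

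The gap: the theorem concerns an \emph{arbitrary} map of operads $E_n\to E_{n+d}$, but your proof only treats the map induced by the standard inclusion $\RR^n\times\{0\}\hookrightarrow\RR^n\times\RR^d$ --- that is the only map which the additivity equivalence of Theorem~\ref{thm:con-pro-box} identifies with $\mathrm{id}\boxtimes\iota$. To obtain the stated theorem you need two further observations: (i) formality of a map of dg-operads is invariant under replacing the map by one homotopic to it, and (ii) the space of operad maps $E_n\to E_{n+d}$ is connected when $d\geq 2$, so that any map is homotopic to the standard one. Point (ii) is not formal; the paper quotes it from \cite{FTW} (Proposition 10.8) or \cite{goppl} (Corollary 4.1.5), and it is a second, independent place where the hypothesis $d\geq 2$ is essential (your proposal uses $d\geq 2$ only to realize the unit $\mu$ on $(S^{d-1})^\wedge$ via Corollary~\ref{cor:spheres}). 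Without this reduction, your argument establishes formality only for the standard stabilization map, not for every map $E_n\to E_{n+d}$ as claimed.
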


\begin{proof}
A map of operads is formal if and only if a map homotopic to it is formal. The space of operad maps $E_n \to E_{n+d}$ is connected if $d \geq 2$ (see \cite[Proposition 10.8]{FTW} or \cite[Corollary 4.1.5]{goppl}). Therefore, we may assume that the map $E_n \to E_{n+d}$ under consideration is the one induced by the standard embedding $\RR^n\to\RR^{n+d}$.

We denote by $\kk$ the field $\FF_p$ or $\QQ$. Let $u$ and $\mu$ be two elements of $\kk^\times$. By Corollary \ref{cor:spheres} and Proposition \ref{prop : action on homology}, we can find an automorphism $u^\sharp$ of $E_n^{\wedge}$ and an automorphism $\mu^\sharp$ of $E_{d}^{\wedge}$ such that the following properties are satisfied:
\begin{itemize}
\item  $u^\sharp$ induces multiplication by $u$ on the vector space $H_{n-1}(E_n(2),\kk)$,
\item $u^\sharp\boxtimes\mu^\sharp$ induces multiplication by $u\mu$ on the vector space $H_{n+d-1}(E_{n+d}(2),\kk)$,
\item $u^{\sharp}$ and $u^{\sharp}\boxtimes\mu^{\sharp}$ define an endomorphism of the map $E_{n}^{\wedge}  \to E_{n+d}^{\wedge}$ in the homotopy category of the arrow category of $\Op_{\infty}\ProS_\kk$. This is represented by a diagram
\[
	\begin{tikzpicture}[descr/.style={fill=white}, baseline=(current bounding box.base)]
	\matrix(m)[matrix of math nodes, row sep=2.5em, column sep=2.5em,
	text height=1.5ex, text depth=0.25ex]
	{
	E_n^{\wedge} & E_{n+d}^{\wedge} \\
	E_n^{\wedge} & E_{n+d}^{\wedge} \; .\\
	};
	\path[->,font=\scriptsize]
		(m-1-1) edge node [left] {$u^\sharp$} (m-2-1)
		(m-2-1) edge node [auto] {} (m-2-2)
		(m-1-1) edge node [auto] {} (m-1-2)
		(m-1-2) edge node [auto] {$u^\sharp \boxtimes \mu^\sharp$} (m-2-2);
	\end{tikzpicture}
\]
\end{itemize}

The last bullet point amounts to the functoriality of $\boxtimes$, together with the observation that $E_n^{\wedge}\to E_{n+d}^{\wedge}$ comes from a map of configuration categories
\[
\textup{id} \boxtimes i : \config(\RR^n)^{\wedge}\boxtimes\config(\RR^0)^{\wedge}\to\config(\RR^n)^{\wedge}\boxtimes\config(\RR^d)^{\wedge}
\]
where $i$ is any map $\config(\RR^0)^{\wedge}\to \config(\RR^d)^{\wedge}$ (the space of such maps is contractible).

As in the absolute case, we want to be in a position to apply Theorem \ref{theo : purity implies formality}. Accordingly, the following task emerges: choose units $u$ and $\mu$, so that the commutative diagram of dg-operads
\[
	\begin{tikzpicture}[descr/.style={fill=white}, baseline=(current bounding box.base)]
	\matrix(m)[matrix of math nodes, row sep=2.5em, column sep=2.5em,
	text height=1.5ex, text depth=0.25ex]
	{
	C'_*(E_n^{\wedge},\kk) & C'_*(u^\sharp\boxtimes \mu^\sharp) \\
	C'_*(E_n^{\wedge},\kk) & C'_*(u^\sharp\boxtimes \mu^\sharp) \; .\\
	};
	\path[->,font=\scriptsize]
		(m-1-1) edge node [left] {$C'_*(u^\sharp)$} (m-2-1)
		(m-2-1) edge node [auto] {} (m-2-2)
		(m-1-1) edge node [auto] {} (m-1-2)
		(m-1-2) edge node [auto] {$C'_*(u^\sharp \boxtimes \mu^\sharp)$} (m-2-2);
	\end{tikzpicture}
\]
is a diagram in the category of $\alpha$-pure chain complexes relative to $v$, for some fixed choice of rational number $\alpha$ and unit $v$. That is, choose $u$ and $\mu$ so that the map 
\[
C_*(E_n, \kk) \to C_*(E_{n+d}, \kk)
\] 
together with the automorphism $(u^\sharp, u^\sharp \boxtimes \mu^\sharp)$ is $\alpha$-pure relative to $v$.

One solution is as follows. Let us treat the $\FF_p$ case. Pick a unit $v$ of $\FF_p$ of order $(p-1)$ and set
\[
u = v^{(n-1)/ \Delta} \quad \textup{ and } \quad \mu = v^{d / \Delta} \; .
\]
Then, in homological degree $j$, $v$ acts on both $H_j(E_n(2), \kk)$ and $H_{j}(E_{n+d}(2), \kk)$ as multiplication by $v^{j/\Delta}$. So we set $\alpha = 1/\Delta$, and conclude using Theorem \ref{theo : purity implies formality} that the map is $\Delta(p-2)$-formal, as required. The rational case is similar, but we pick a unit $v$ of infinite order.

This proof needs to be adapted slightly in the case $n=1$. In that case, we cannot realize any unit $u\in\kk^\times$. This is not a problem, we can pick $u^\sharp=\id, u=1$ and the argument above goes through without changing anything else. Note that in that case $\Delta=d$.
\end{proof}

\begin{rem}
Since formality of $C_*(E_n,\FF_p)\to C_*(E_{n+d},\FF_p)$ implies formality of the source, one should not expect a formality range that is better than $(n-1)(p-2)$. So the range of the theorem seems to be optimal when $(n-1)$ divides $d$. It is unclear to us whether the range can be sharpened if $(n-1)$ does not divide $d$.
\end{rem}

\section{Hopf formality}

In the rational case, we can prove a stronger formality result that captures both the formality of the dg-operad $C_*(E_n,\QQ)$ and the formality of each of the commutative (or more precisely $E_\infty$) differential graded algebras $C^*(E_n(k),\QQ)$ in a compatible way. This is Theorem C in the introduction.

\medskip
Let $\cat{CDGA}$ be the category of commutative differential graded commutative algebras over $\QQ$. Sullivan constructed a functor of piecewise polynomial differential forms
\[\Omega^*_{poly}:\S\to\cat{CDGA}^\op\]
that is naturally quasi-isomorphic to $C^*(-,\QQ)$ as an $E_\infty$-algebra. The value $\Omega^*_{poly}(X)$ captures all the rational information about the homotopy type of $X$ when $X$ is nilpotent of finite type.

\begin{defn}
An $\infty$-Hopf cooperad is a functor $X:\mathsf{Tree}^\op\to\cat{CDGA}^\op$ that satisfies the Segal condition and such that the value at $\eta$ is quasi-isomorphic to the terminal object of $\cat{CDGA}^\op$.
\end{defn}

Variants of this definition appeared in \cite[Section 3]{lambrechtsformality} and in \cite[Section 8.5]{ciricimixed}.

\begin{rem}
It is probably worth mentioning that the product in $\cat{CDGA}^\op$ is the coproduct in $\cat{CDGA}$ and is simply given by the tensor product of commutative algebras. The terminal object in $\cat{CDGA}^\op$ is the algebra $\QQ$ concentrated in degree $0$. Moreover, the cohomology of an $\infty$-Hopf cooperad is a strict Hopf cooperad by the K\"unneth isomorphism.
\end{rem}

For $X$ and $Y$ two spaces, we have a canonical map in $\cat{CDGA}$
\[\Omega^*_{poly}(X)\otimes\Omega^*_{poly}(Y)\to \Omega^*_{poly}(X\times Y)\]
which is a quasi-isomorphism by the K\"unneth isomorphism. From this observation, we easily deduce the following result.

\begin{prop}
Let $P$ be an $\infty$-operad. Then $\Omega^*_{poly}(P)$ is an $\infty$-Hopf cooperad.
\end{prop}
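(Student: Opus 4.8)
The plan is to verify the two defining conditions of an $\infty$-Hopf cooperad directly for the composite functor
\[
\mathsf{Tree}^\op \xrightarrow{\;P\;} \S \xrightarrow{\;\Omega^*_{poly}\;} \cat{CDGA}^\op,
\]
which is what $\Omega^*_{poly}(P)$ denotes. First I would record that this composite lands in the correct $\infty$-category: since $\Omega^*_{poly}$ is naturally quasi-isomorphic to $C^*(-,\QQ)$, it sends weak equivalences of spaces to quasi-isomorphisms of $\cat{CDGA}$, so $\Omega^*_{poly}(P)$ is a well-defined functor into the $\infty$-category underlying $\cat{CDGA}^\op$.

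The condition at $\eta$ is the easy half. Because $P$ is an $\infty$-operad, $P_\eta$ is contractible, so $\Omega^*_{poly}(P_\eta)$ is quasi-isomorphic to $\Omega^*_{poly}(*)$, which is $\QQ$ in degree $0$, the terminal object of $\cat{CDGA}^\op$. For the Segal condition I would fix a tree $T$ and start from the space-level Segal map $P_T \to \prod_{v\in T} P_{C_{|v|}}$, which is a weak equivalence. Applying $\Omega^*_{poly}$ and the comparison map of the preceding proposition, the Segal map of $\Omega^*_{poly}(P)$, read inside $\cat{CDGA}$, factors as the composite
\[
\bigotimes_{v\in T} \Omega^*_{poly}(P_{C_{|v|}}) \longrightarrow \Omega^*_{poly}\Big(\prod_{v\in T} P_{C_{|v|}}\Big) \longrightarrow \Omega^*_{poly}(P_T).
\]
The second arrow is $\Omega^*_{poly}$ applied to the Segal equivalence of spaces, hence a quasi-isomorphism; the first arrow is the iterated K\"unneth comparison, a quasi-isomorphism obtained by applying the preceding proposition finitely many times to reduce the finite product to binary ones. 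Thus the composite is a quasi-isomorphism, i.e.\ the Segal map of $\Omega^*_{poly}(P)$ is an equivalence in $\cat{CDGA}^\op$.

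The one point deserving care — and the only real obstacle — is the identification of the homotopy product $\prod^h_{v}$ in $\cat{CDGA}^\op$ with the tensor product $\bigotimes_{v}$ in $\cat{CDGA}$ used above. The coproduct in $\cat{CDGA}$ is given by $\otimes$, so I must check that this tensor product already computes the \emph{derived} coproduct, with no cofibrant replacement needed. Here the hypothesis that we work over the field $\QQ$ is decisive: the tensor product of chain complexes over a field is exact and preserves quasi-isomorphisms (K\"unneth, with no Tor contributions), so $\otimes$ preserves quasi-isomorphisms in each variable and hence models $\prod^h$ in $\cat{CDGA}^\op$. With this identification in hand, the factorization above settles the Segal condition and, together with the $\eta$ computation, completes the proof.
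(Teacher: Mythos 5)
Your proof is correct and is essentially the paper's own argument: the paper simply records the K\"unneth quasi-isomorphism $\Omega^*_{poly}(X)\otimes\Omega^*_{poly}(Y)\to\Omega^*_{poly}(X\times Y)$ and states that the proposition follows, which is exactly the factorization of the Segal map you spell out, together with the contractibility of $P_\eta$. Your extra care about $\otimes$ computing the derived coproduct in $\cat{CDGA}$ over $\QQ$ is a valid elaboration of what the paper leaves implicit.
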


\begin{rem}
In his book \cite{fressebook}, Benoit Fresse introduces the notion of strict Hopf cooperads. These are operad objects in $\cat{CDGA}^{op}$. Fresse also constructs a functor from operads in spaces to Hopf cooperads. This functor is not obtained by simply applying $\Omega^*_{poly}$ aritywise, since $\Omega^*_{poly}$ is not a lax monoidal functor (it is in fact oplax) and therefore does not send operads to strict Hopf coooperads. It seems very plausible that the homotopy theory of Hopf cooperads studied by Fresse is equivalent to the homotopy theory of $\infty$-Hopf cooperads. We will not address this question here. Nevertheless, we point out that the $\infty$-Hopf cooperad $\Omega^*_{poly}(P)$ carries enough information to construct the rationalization of a topological operad $P$. 
Indeed, applying a spatial realization functor objectwise to $\Omega^*_{poly}(P)$ we obtain an $\infty$-operad in spaces that we can then turn into a strict operad by Theorem \ref{thm: cm}.
\end{rem}

The Hopf analogues of the theorems of the previous sections are the following.

\begin{thm}
For all $n$, the $\infty$-Hopf cooperad $\Omega_{poly}^*(E_n)$ is quasi-isomorphic to $H^*(E_n,\QQ)$ as an $\infty$-Hopf cooperad.
\end{thm}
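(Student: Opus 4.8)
The plan is to run the same purity argument that proves Theorem \ref{thm:main}, but to carry it out at the level of \emph{cochains} via $\Omega^*_{poly}$ rather than chains, so that the commutative multiplications — and hence the entire Hopf cooperad structure — are retained throughout. In place of the homological statement Theorem \ref{theo : purity implies formality}, I would feed the relevant endomorphism into its cohomological counterpart, Theorem \ref{thm : cohomological formality}.

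First I would fix a unit $u\in\QQ^\times$ of infinite order and, using Proposition \ref{prop : action on homology} (together with Theorem \ref{thm:main-spheres} and Proposition \ref{prop:complete-op-conf}), choose a homotopy automorphism $u^\sharp$ of $(E_n)^\wedge_\QQ$ that acts on $H_{k(n-1)}(E_n,\QQ)$ by multiplication by $u^k$. Since $\Omega^*_{poly}$ extends to pro-spaces and, for each arity $m$, the comparison $\Omega^*_{poly}(E_n(m))\to\Omega^*_{poly}((E_n)^\wedge_\QQ(m))$ is a quasi-isomorphism because $E_n(m)$ is of finite type (compare Proposition \ref{prop:homologycompl}), transporting $u^\sharp$ back along this comparison yields an endomorphism $\sigma$ of the $\infty$-Hopf cooperad $\Omega^*_{poly}(E_n)$ in the homotopy category. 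As the transpose of scalar multiplication is the same scalar, $\sigma$ acts on $H^{k(n-1)}(E_n(m),\QQ)$ again by $u^k$.

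Next I would observe that, arity by arity, the pair $(\Omega^*_{poly}(E_n(m)),\sigma)$ lies in $\cat{Ch}^*(\QQ)^\alpha$ for $\alpha=1/(n-1)$: the cohomology $H^*(E_n(m),\QQ)$ is that of the Poisson/Gerstenhaber cooperad, concentrated in degrees divisible by $(n-1)$, and in degree $j=k(n-1)$ the endomorphism $\sigma$ has the single eigenvalue $u^k=u^{\alpha j}$, which is exactly the purity condition. Because $u$ has infinite order, the hypotheses of Theorem \ref{thm : cohomological formality} hold, so the forgetful functor $U\colon\cat{Ch}^*(\QQ)^\alpha\to\cat{Ch}^*(\QQ)$ is connected to $H^*\circ U$ by a zig-zag of quasi-isomorphisms of \emph{lax monoidal} functors.

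Finally I would upgrade this objectwise formality to formality of $\infty$-Hopf cooperads. The key point is that $\cat{Ch}^*(\QQ)^\alpha$ is symmetric monoidal — purity is closed under tensor, since both degrees and eigenvalue-exponents add — so each $\Omega^*_{poly}(E_n(m))$ is a commutative algebra object in $\cat{Ch}^*(\QQ)^\alpha$, and the whole diagram $\tree^\op\to\cat{CDGA}^\op$ refines to a diagram valued in commutative algebras in $\cat{Ch}^*(\QQ)^\alpha$. Applying the formality transformations of Theorem \ref{thm : cohomological formality} objectwise then preserves both the commutative multiplications (by lax monoidality) and the tensor-product Segal conditions defining the cooperad structure (again by monoidality, with naturality handling the functoriality over $\tree^\op$), producing the desired zig-zag of quasi-isomorphisms of $\infty$-Hopf cooperads from $\Omega^*_{poly}(E_n)$ to $H^*(E_n,\QQ)$. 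I expect the main obstacle to be precisely this last compatibility: checking that lax monoidal formality of the forgetful functor transports faithfully through the cooperadic Segal structure, and — more delicately — arranging $\sigma$ to be strict enough, or rectifying via Hinich's results, so that $(\Omega^*_{poly}(E_n(m)),\sigma)$ genuinely defines an object of $\cat{Ch}^*(\QQ)^\alpha$ rather than merely an object of the homotopy category.
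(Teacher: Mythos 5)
Your proposal is correct and is essentially the paper's own argument: lift a unit $u\in\QQ^\times$ of infinite order to a homotopy automorphism $u^\sharp$ of $(E_n)^\wedge_\QQ$, observe that the resulting pair is $1/(n-1)$-pure in cohomology, and apply Theorem \ref{thm : cohomological formality}, whose lax monoidal naturality is exactly what carries the formality through the commutative multiplications and the Segal conditions of the $\infty$-Hopf cooperad. The only divergence is your final worry about strictifying $\sigma$ on $\Omega^*_{poly}(E_n)$: the paper sidesteps this entirely by never transporting the action back across the comparison map --- it proves formality of $\Omega^*_{poly}(E_n^{\wedge})$, on which $u^\sharp$ acts by an honest endomorphism of the diagram $\tree\to\cat{CDGA}$, and then transfers the formality conclusion (rather than the automorphism) along the quasi-isomorphism of $\infty$-Hopf cooperads $\Omega^*_{poly}(E_n)\to\Omega^*_{poly}(E_n^{\wedge})$ supplied by Proposition \ref{prop:homologycompl} and the finite type of the spaces $E_n(m)$.
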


\begin{proof}
We first extend $\Omega^*_{poly}$ to the category of pro-spaces via the following formula
\[\Omega^*_{poly}(\{X_i\}_{i\in I}):=\colimsub{i\in I}\Omega^*_{poly}(X_i)\]
We claim that this functor is naturally quasi-isomorphic to $C^*(-,\QQ)$. Indeed, since filtered colimits are exact, it is enough to prove this for a constant pro-space in which case this is a standard fact about $\Omega^*_{poly}(-)$. The canonical map of dendroidal objects in pro-spaces
\[E_n\to E_{n}^{\wedge}\]
induces a quasi-isomorphism of $\infty$-Hopf cooperads
\[\Omega^*_{poly}(E_n)\to \Omega^*_{poly}(E_{n}^{\wedge})\]
by Proposition \ref{prop:homologycompl}. It is therefore enough to prove the formality of the target. Pick an element $u$ in $\QQ^{\times}$ of infinite order and, using Proposition \ref{prop : action on homology}, lift it to an automorphism $u^\sharp$ of $E_n^{\wedge}$ such that the action of $u^\sharp$ on $H^k(\Omega^*_{poly}(E_{n}^{\wedge}))$ is multiplication by the number $u^{\ell}$ where $\ell = -k/(n-1)$. We can then apply Theorem \ref{thm : cohomological formality} and deduce that the functor $\Omega^*_{poly}(E_{n}^{\wedge}) : \mathsf{Tree}\to\cat{CDGA}$ is formal.
\end{proof}

\begin{thm}
Let $n\geq 1$ and $d\geq 2$ be two integers. Then the map 
\[
\Omega^*_{poly}(E_n)\to \Omega^*_{poly}(E_{n+d})
\]
induced by any map $E_n \to E_{n+d}$, is formal as a map of $\infty$-Hopf cooperads.
\end{thm}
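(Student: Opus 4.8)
The plan is to mirror the proof of the relative chain formality theorem, replacing singular chains by the functor $\Omega^*_{poly}$ and the homological purity-implies-formality statement by its cohomological counterpart, Theorem \ref{thm : cohomological formality}. First I would fix a convenient model for the map: since the space of operad maps $E_n \to E_{n+d}$ is connected for $d \geq 2$ (by \cite[Proposition 10.8]{FTW} or \cite[Corollary 4.1.5]{goppl}) and formality of a map depends only on its homotopy class, I may assume the map is the one arising from $\id \boxtimes i : \config(\RR^n)^\wedge \boxtimes \config(\RR^0)^\wedge \to \config(\RR^n)^\wedge \boxtimes \config(\RR^d)^\wedge$. Extending $\Omega^*_{poly}$ to pro-spaces by filtered colimits exactly as in the absolute Hopf formality theorem, the canonical maps $E_n \to E_n^\wedge$ and $E_{n+d} \to E_{n+d}^\wedge$ assemble into a square of maps of $\infty$-Hopf cooperads whose vertical legs are quasi-isomorphisms by Proposition \ref{prop:homologycompl}. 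It therefore suffices to prove that the bottom map $\Omega^*_{poly}(E_n^\wedge) \to \Omega^*_{poly}(E_{n+d}^\wedge)$ is formal.

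To equip this map with a pure structure I would reuse the construction from the relative chain formality proof verbatim. Fixing a unit $v \in \QQ^\times$ of infinite order and setting $u = v^{(n-1)/\Delta}$ and $\mu = v^{d/\Delta}$ with $\Delta = \gcd(n-1,d)$, Corollary \ref{cor:spheres} and Proposition \ref{prop : action on homology} supply an automorphism $u^\sharp$ of $E_n^\wedge$ together with $u^\sharp \boxtimes \mu^\sharp$ of $E_{n+d}^\wedge$, which define an endomorphism of the arrow $E_n^\wedge \to E_{n+d}^\wedge$ in the homotopy category of the arrow category of $\infty$-operads in pro-spaces. Applying $\Omega^*_{poly}$ produces a homotopy-commuting square of endomorphisms of the map $\Omega^*_{poly}(E_n^\wedge) \to \Omega^*_{poly}(E_{n+d}^\wedge)$. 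Since $\Delta$ divides both $n-1$ and $n+d-1$, the arithmetic works out (as in the chain case) so that the induced endomorphism acts on $H^j$ of every arity of both cooperads by $v^{-j/\Delta}$; equivalently, arity-wise this arrow lands in the category $\cat{Ch}^*(\QQ)^\alpha$ of $\alpha$-pure objects relative to the unit $v^{-1}$, with $\alpha = 1/\Delta$.

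Finally I would invoke Theorem \ref{thm : cohomological formality}, which provides a zig-zag of quasi-isomorphisms of lax monoidal functors, natural in its input, connecting the forgetful functor $\cat{Ch}^*(\QQ)^\alpha \to \cat{Ch}^*(\QQ)$ to $H^* \circ U$. Because this zig-zag is both natural and lax monoidal, evaluating it on the entire diagram — the two cooperads, their Segal structure maps, the map between them, and the endomorphisms — yields a zig-zag of quasi-isomorphisms of maps of $\infty$-Hopf cooperads connecting $\Omega^*_{poly}(E_n^\wedge) \to \Omega^*_{poly}(E_{n+d}^\wedge)$ to the induced map on cohomology. Transporting back along the quasi-isomorphisms of the first step then gives the claimed formality.

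The hard part will be the last step: upgrading the formality furnished by Theorem \ref{thm : cohomological formality} from a statement about individual pure objects to a statement about a \emph{map} of $\infty$-Hopf cooperads. This rests entirely on the naturality and lax monoidality of the formality zig-zag, which is exactly what permits its application to a $\tree^\op$-indexed diagram in $\cat{Ch}^*(\QQ)^\alpha$ carrying both an endomorphism and the arrow structure. A secondary point to verify is that $\Omega^*_{poly}$ of the endomorphism lands arity-wise in the pure category; this follows by dualizing the all-arity homological computation already contained in Proposition \ref{prop : action on homology}.
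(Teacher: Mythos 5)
The paper states this theorem without giving a proof, evidently intending exactly the combination you carry out: the reduction to completions, choice of units $u = v^{(n-1)/\Delta}$, $\mu = v^{d/\Delta}$, and the arrow-category purity construction from the relative chain formality proof, merged with the colimit extension of $\Omega^*_{poly}$ to pro-spaces and the appeal to Theorem \ref{thm : cohomological formality} (whose naturality and lax monoidality, as the paper's remark following it notes, is what allows evaluation on a $\tree^\op$-indexed diagram and on the arrow between the two cooperads) from the absolute Hopf formality proof. Your proposal executes this correctly, including the sign flip to purity relative to $v^{-1}$ forced by the contravariance of $\Omega^*_{poly}$, which matches the paper's convention $\ell = -k/(n-1)$ in the absolute case, so it is essentially the paper's (implicit) argument.
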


\section{Formality of chains on configuration spaces}

We now prove Theorem D from the introduction.

\begin{thm}
Let $d\geq 3$, the dg-algebra $C^*(\mathrm{Conf}_n(\RR^d),\FF_p)$ is $(d-1)(p-2)$-formal.
\end{thm}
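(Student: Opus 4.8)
The plan is to deduce this from the partial formality of the little disks operad (Theorem A / Theorem~\ref{thm:main}) by extracting the arity-$n$ component and identifying $C^*(\mathrm{Conf}_n(\RR^d),\FF_p)$ with a dual of a piece of the operad, equipped with the same pure automorphism. First I would recall that the configuration space $\mathrm{Conf}_n(\RR^d)$ is weakly equivalent to $E_d(n)$, the arity-$n$ space of the little $d$-disks operad, so that $C^*(\mathrm{Conf}_n(\RR^d),\FF_p)\simeq C^*(E_d(n),\FF_p)$ as $E_\infty$-algebras (in particular as dg-algebras). The top non-vanishing cohomological degree of $H^*(E_d(n),\FF_p)$ is $(n-1)(d-1)$, and the homology is pure for the weight grading in which a class in degree $k(d-1)$ has weight $k$. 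This is the same purity structure that drives Theorem~\ref{thm:main}.

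The key step is to promote the homotopy automorphism $u^\sharp$ of $E_d^\wedge$ constructed in Proposition~\ref{prop : action on homology} to an automorphism of the single space $E_d^\wedge(n)\simeq \mathrm{Conf}_n(\RR^d)^\wedge$, and hence of the dg-algebra $C^*(\mathrm{Conf}_n(\RR^d)^\wedge,\FF_p)$. Picking $u$ a unit of $\FF_p$ of order $p-1$, the induced action on $H^{k(d-1)}$ is multiplication by $u^{-k}$ (the sign in the exponent is immaterial since $u$ ranges over all units), so the pair $(C^*(\mathrm{Conf}_n(\RR^d),\FF_p),u^\sharp)$ lands in the category $\cat{Ch}^*(\FF_p)^\alpha$ of $\alpha$-pure cochain complexes with $\alpha = 1/(d-1)$, relative to $u$. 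Here I would use the cohomological finite-type analogue of the completion comparison: since $\mathrm{Conf}_n(\RR^d)$ is nilpotent of finite type for $d\geq 3$ (its fundamental group is trivial when $d\geq 3$), the canonical map $C^*(\mathrm{Conf}_n(\RR^d),\FF_p)\to C^*(\mathrm{Conf}_n(\RR^d)^\wedge,\FF_p)$ is a quasi-isomorphism, exactly as in Proposition~\ref{prop:homologycompl} and the discussion around Proposition~\ref{prop:chains-dend}. This is where the hypothesis $d\geq 3$ is essential, as it guarantees simple-connectivity and hence good behaviour of $\FF_p$-completion; for $d=2$ the fundamental group is the pure braid group and the finite-type comparison fails in the naive form, consistent with Salvatore's non-formality result cited after Theorem~D.

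Having placed $C^*(\mathrm{Conf}_n(\RR^d),\FF_p)$ in $\cat{Ch}^*(\FF_p)^{1/(d-1)}$ relative to a unit of order $p-1$, I would invoke the $N$-formality statement. Since $h=p-1$ and $\alpha = 1/(d-1)$, the formality range is $N=\lfloor (h-1)/\alpha\rfloor = \lfloor (p-2)(d-1)\rfloor = (d-1)(p-2)$, which is exactly the claimed bound. The precise tool is the lax monoidal $N$-formality of the forgetful functor out of $\cat{Ch}(\FF_p)^\alpha$, established as the homological Theorem~\ref{theo : purity implies formality} and, in cohomological guise, as Theorem~\ref{thm : cohomological formality} (stated there in characteristic zero but valid more generally, as noted in the excerpt); applied to the single algebra $C^*(\mathrm{Conf}_n(\RR^d),\FF_p)$ this yields an $N$-formal dg-algebra. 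The main obstacle I anticipate is purely bookkeeping: verifying that the automorphism $u^\sharp$ really acts on $H^{k(d-1)}(\mathrm{Conf}_n(\RR^d),\FF_p)$ by the single scalar $u^{-k}$ and not by a more complicated operator. This reduces to the fact that $H^*(E_d(n),\FF_p)$ is generated, as an algebra, by the arity-two classes in degree $d-1$ (the Arnold/Orlik--Solomon relations), on which the action is multiplication by $u^{-1}$ by construction; so the action on the degree $k(d-1)$ part, being a product of $k$ such generators, is multiplication by $u^{-k}$, confirming purity.
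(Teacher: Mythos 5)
Your proposal is correct and follows essentially the same route as the paper: restrict the automorphism of $E_d^\wedge$ supplied by Proposition~\ref{prop : action on homology} to the arity-$n$ space $E_d^{\wedge}(n)\simeq \mathrm{Conf}_n(\RR^d)^{\wedge}$, note that the induced action on cohomology is $1/(d-1)$-pure relative to a unit of order $p-1$, and feed this into a cohomological purity-implies-formality statement (the paper cites \cite[Proposition 7.8]{CiriciHorel} for this step, rather than the characteristic-zero Theorem~\ref{thm : cohomological formality} together with the remark that it generalizes).

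One correction to your justification of the final comparison: the quasi-isomorphism between $C^*(\mathrm{Conf}_n(\RR^d)^{\wedge},\FF_p)$ and $C^*(\mathrm{Conf}_n(\RR^d),\FF_p)$ needs no nilpotency or finite-type hypotheses. It holds for any space by definition of completion, since weak equivalences in $\ProS_{\FF_p}$ are precisely the maps inducing isomorphisms on continuous $\FF_p$-cohomology, and $X\to X^{\wedge}$ is such a map. So this step is not ``where the hypothesis $d\geq 3$ is essential,'' and your assertion that the comparison fails in the naive form for $d=2$ is incorrect; note in particular that the pure braid groups are $\FF_p$-good by Proposition~\ref{prop : braid groups are good}, so even a materialization-based comparison behaves well there. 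Your finite-type detour is harmless under the stated hypothesis $d\geq 3$ (configuration spaces are then simply connected of finite type), so this is a misattribution of where the hypothesis enters rather than a gap in the proof.
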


\begin{proof}
We pick a unit $u$ of $\FF_p$ of order $(p-1)$ and using Proposition \ref{prop : action on homology}, we lift it to an automorphism of $u^\sharp$ of $E_d^{\wedge}(n)\simeq \mathrm{Conf}_n(\RR^d)^{\wedge}$ which acts by multiplication by $u^k$ in cohomological degree $k(d-1)$. We can then apply \cite[Proposition 7.8]{CiriciHorel} and deduce that the dg-algebra $C^*(\mathrm{Conf}_n(\RR^d)^{\wedge},\FF_p)$ is $(p-2)(d-1)$ formal. This dg-algebra is quasi-isomorphic to $C^*(\mathrm{Conf}_n(\RR^d),\FF_p)$ by definition of completion.
\end{proof}

\begin{rem}
Contrary to the previous section, we do not know how to formulate a Hopf formality statement over $\FF_p$ that would include the above statement and the formality of $C_*(E_n,\FF_p)$ as a dg-operad. The problem is that the tensor product is no longer the coproduct in the category of dg-algebras over $\FF_p$. 
\end{rem}

\begin{rem}
In \cite[Theorem 8.13]{CiriciHorel} a similar result is obtained when $d$ is even. The theorem above applies in the odd dimensional case. The bound that we obtain here is sharper than the one in \emph{loc. cit.} in the even dimensional case.
\end{rem}

\begin{cor}
For $d\geq 3$, the dg-algebra $C^*(\mathrm{Conf}_n(\RR^d),\FF_p)$ is formal if $n\leq (p-1)$.
\end{cor}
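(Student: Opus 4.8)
The plan is to read off this corollary directly from the $N$-formality statement just established in the preceding theorem, by checking that in the stated range of arities the truncation below the $N$-formality threshold retains all the cohomology. The theorem asserts that $C^*(\mathrm{Conf}_n(\RR^d),\FF_p)$ is $(d-1)(p-2)$-formal, meaning it is connected to its cohomology by a zig-zag of dg-algebra maps inducing isomorphisms in degrees $\leq (d-1)(p-2)$. To upgrade $N$-formality to honest formality, it suffices to verify that the cohomology of $\mathrm{Conf}_n(\RR^d)$ vanishes above the threshold $N = (d-1)(p-2)$, for then the truncation map $C^* \to t_{\leq N}C^*$ is a quasi-isomorphism and $N$-formality coincides with formality (the cohomological analogue of the Proposition relating $N$-formality of a dg-operad to formality of its truncation).

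**First I would** recall the computation of the top nonvanishing cohomological degree of configuration spaces. The space $\mathrm{Conf}_n(\RR^d)$ is homotopy equivalent to $E_d(n)$, and its cohomology $H^*(\mathrm{Conf}_n(\RR^d),\FF_p)$ is generated multiplicatively by the classes $\omega_{ij}$ of degree $d-1$, subject to the Arnold relations. A monomial in these generators corresponds to a forest on the $n$ points, and the maximal number of independent such classes in a nonzero product is $n-1$ (one less than the number of points, since the classes associated to a spanning tree are the longest independent products). Consequently the top nonvanishing cohomological degree is exactly $(n-1)(d-1)$, a fact already invoked in the statement of Corollary A in the introduction.

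**The key comparison** is then the inequality $(n-1)(d-1) \leq (d-1)(p-2)$, which holds precisely when $n-1 \leq p-2$, i.e.\ when $n \leq p-1$, the hypothesis of the corollary. Under this hypothesis every cohomology class of $\mathrm{Conf}_n(\RR^d)$ sits in degree $\leq N = (d-1)(p-2)$, so the natural map $C^*(\mathrm{Conf}_n(\RR^d),\FF_p) \to t_{\leq N}C^*(\mathrm{Conf}_n(\RR^d),\FF_p)$ is a quasi-isomorphism of dg-algebras. Since the zig-zag of dg-algebra maps furnished by the theorem induces isomorphisms on cohomology in degrees $\leq N$ and all the cohomology is concentrated there, each map in the zig-zag is in fact a genuine quasi-isomorphism, whence $C^*(\mathrm{Conf}_n(\RR^d),\FF_p)$ is formal.

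**The only subtle point**—and the step I would be most careful about—is arguing that an $N$-formality zig-zag becomes a formality zig-zag once the cohomology vanishes above $N$. This is not quite automatic for an arbitrary zig-zag, because the intermediate dg-algebras in the chain need not have their cohomology concentrated in low degrees. The clean way to handle this is to apply the truncation functor $t_{\leq N}$ to the entire zig-zag: by lax monoidality of $t_{\leq N}$ this produces a zig-zag of \emph{truncated} dg-algebras, each map of which is a quasi-isomorphism (since the original maps were isomorphisms on cohomology in degrees $\leq N$), connecting $t_{\leq N}C^*$ to $t_{\leq N}H^*$. As both $C^*$ and $H^*$ already coincide with their own truncations in the present arity range, this exhibits the desired formality quasi-isomorphisms directly, completing the proof.
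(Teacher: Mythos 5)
Your proposal is correct and follows essentially the same route as the paper: compare the top nonvanishing cohomological degree $(n-1)(d-1)$ of $\mathrm{Conf}_n(\RR^d)$ with the threshold $N=(d-1)(p-2)$ from the preceding theorem, and note that $n\leq p-1$ makes the former at most the latter. Your extra care in upgrading $N$-formality to formality via the truncation functor is a point the paper leaves implicit, but it is the same argument.
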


\begin{proof}
The top non-vanishing cohomology group of $C^*(\mathrm{Conf}_n(\RR^d),\FF_p)$ is in degree $(d-1)(n-1)$. Therefore, using the previous theorem, this dg-algebra is formal if 
$(d-1)(n-1)\leq (d-1)(p-2)$.
\end{proof}

\appendix

\section{Goodness of the pure braid groups over $\QQ$}

In this section, we prove that the pure braid groups are good over $\QQ$ (in the sense of Definition \ref{defn:good-group}). We could not locate this precise statement in the literature, but we believe this result is known (\emph{c.f.}  \cite{papadimarational} or Proposition 14.2.4 in the second volume of \cite{fressebook} for similar, though not quite the same, statements).

\medskip
We denote by $\cat{nGrp}$ the category of nilpotent groups. Given a group $G$, we denote by $\Gamma^k G$ the $k$-th term of the lower central series filtration. The following terminology seems to be due to Quillen.

\begin{defn}
A nilpotent Malcev group is a group that is nilpotent and uniquely divisible. We denote the category of such groups by $\cat{nMal}$.
\end{defn}

The inclusion $\cat{nMal}\to\cat{nGrp}$ has a left adjoint. We follow Bousfield-Kan and denote this left adjoint by $G\mapsto G\otimes\QQ$. This is not an abuse of notation since, if $G$ is abelian, this left adjoint is precisely given by tensoring with $\QQ$. Quillen proves in \cite[Appendix A, Cor. 3.8]{quillenrational} that $G \otimes \QQ$ is identified with the Malcev completion of $G$. Note also that a nilpotent Malcev group whose abelianization is finite-dimensional over $\QQ$ is, by definition, the same as a $\QQ$-unipotent group (see Definition \ref{def : unipotent}).

\begin{prop}\label{prop:lcsadjoint}
For any group $G$, the functor 
\[
\Hom(G,-):\cat{nMal}\to\cat{Set}
\]
is pro-represented by the pro-object $\{(G/\Gamma^k G)\otimes\QQ\}_{k\in\mathbb{N}}$.

If $G$ has the property that $H_1(G,\QQ)$ is finite dimensional, then the pro-object 
\[
\{(G/\Gamma^k G)\otimes\QQ\}_{k\in\mathbb{N}}
\] 
is a model for the $\QQ$-completion $G^{\wedge}$.
\end{prop}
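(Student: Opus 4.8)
The plan is to establish the two assertions in turn, deducing the statement about $\QQ$-completion from the pro-representability statement.

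For the first assertion, I would fix a group $N$ in $\cat{nMal}$ and produce a natural bijection between $\Hom(G,N)$ and the filtered colimit over $k$ of $\Hom((G/\Gamma^k G)\otimes\QQ, N)$. The starting observation is that any homomorphism $f\colon G\to N$ automatically annihilates $\Gamma^k G$ as soon as $k$ exceeds the nilpotency class of $N$, since $f(\Gamma^k G)\subseteq \Gamma^k N=1$; hence $f$ factors uniquely through the projection $G\to G/\Gamma^k G$. This identifies $\Hom(G,N)$ with the colimit over $k$ of $\Hom(G/\Gamma^k G, N)$, the transition maps being restriction along the surjections $G/\Gamma^{k+1}G\to G/\Gamma^k G$. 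Each $G/\Gamma^k G$ is nilpotent, so the adjunction between the inclusion $\cat{nMal}\to\cat{nGrp}$ and Malcev completion $(-)\otimes\QQ$ gives $\Hom(G/\Gamma^k G, N)\cong\Hom((G/\Gamma^k G)\otimes\QQ, N)$ compatibly with the transition maps. Composing the two identifications yields the claim, and naturality in $N$ is immediate from the construction. This part is purely formal and uses nothing about $G$.

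For the second assertion I would first check that each $(G/\Gamma^k G)\otimes\QQ$ genuinely lies in $\cat{Gpd}^{cofin}_{\QQ}$, i.e.\ is $\QQ$-unipotent in the sense of Definition~\ref{def : unipotent}. Unique divisibility and nilpotency hold by construction, so the only condition to verify is that the first rational homology is finite-dimensional, and this is exactly where the hypothesis on $G$ enters. Since $\Gamma^k G\subseteq [G,G]$ for $k\geq 2$, passing to abelianizations gives $(G/\Gamma^k G)^{ab}=G^{ab}$, and Malcev completion commutes with abelianization after tensoring with $\QQ$, so $H_1((G/\Gamma^k G)\otimes\QQ,\QQ)\cong H_1(G,\QQ)$, which is finite-dimensional by assumption. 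Thus the pro-object $\{(G/\Gamma^k G)\otimes\QQ\}$ lives in $\Pro(\cat{Gpd}^{cofin}_{\QQ})$. Now, by Definition~\ref{defn:good-group} the $\QQ$-completion $G^{\wedge}$ is the value of the left adjoint to $\Pro(\cat{Gpd}^{cofin}_{\QQ})\hookrightarrow\Pro(\mathbf{Gpd})$ on $G$; by the Yoneda lemma in the pro-category it suffices to show that $\{(G/\Gamma^k G)\otimes\QQ\}$ and $G^{\wedge}$ corepresent the same functor on $\cat{Gpd}^{cofin}_{\QQ}$. For a constant target $V\in\cat{Gpd}^{cofin}_{\QQ}$ one has $\Hom_{\Pro(\mathbf{Gpd})}(G,V)=\Hom_{\mathbf{Gpd}}(G,V)$ on one side, and the pro-mapping-space formula computes the other side as the colimit over $k$ of $\Hom((G/\Gamma^k G)\otimes\QQ, V)$. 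Because $G$ has a single object, a functor $G\to V$ selects an object $v$ together with a homomorphism into the automorphism group $\aut_V(v)$, which is $\QQ$-unipotent and in particular nilpotent Malcev; so both sides reduce componentwise to the group computation of the first assertion and therefore agree. A general pro-object target is a cofiltered limit of such $V$, and both corepresented functors send it to the corresponding limit, so the identification extends.

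The principal obstacle is the finiteness input rather than any diagram chase. The first assertion is formal, but in the second one must ensure that the Malcev completions $(G/\Gamma^k G)\otimes\QQ$ actually land in $\cat{Gpd}^{cofin}_{\QQ}$ and not merely in the larger class of uniquely divisible nilpotent groups with possibly infinite-dimensional homology; this is precisely the function of the hypothesis that $H_1(G,\QQ)$ is finite-dimensional, which (using that the associated graded Lie ring of the lower central series is generated in degree one) forces all lower central series quotients to be finite-dimensional, as recorded after Definition~\ref{def : unipotent}. A secondary point that should be made explicit, since the completion is defined for groupoids, is the reduction of the groupoid-valued corepresentability to the group case; this is routine but genuinely uses that $G$ is connected as a groupoid.
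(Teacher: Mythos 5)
Your proof is correct and takes essentially the same route as the paper: the first assertion via factoring any map to a nilpotent Malcev group through a nilpotent quotient $G/\Gamma^k G$ and then invoking the adjunction for $(-)\otimes\QQ$, and the second by verifying that each $(G/\Gamma^k G)\otimes\QQ$ is $\QQ$-unipotent. The only differences are cosmetic: you check finite-dimensionality of $H_1$ directly via invariance of the abelianization, where the paper checks the graded pieces $\Gamma^\ell G/\Gamma^{\ell+1}G\otimes\QQ$, and you spell out the Yoneda/universal-property step and the groupoid-to-group reduction that the paper leaves implicit.
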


\begin{proof}
The first statement amounts to the isomorphism
\[
\Hom(\{G/\Gamma^kG \otimes \QQ\}_{k}, N) = \colimsub{k}\Hom(G/\Gamma^kG \otimes \QQ, N) \cong \Hom(G, N) \; .
\]
which holds, by adjunction, for every nilpotent Malcev group $N$.

In order to prove the second statement, it suffices to prove that for each $k$ the group $(G/\Gamma^k G)\otimes\QQ$ is $\QQ$-unipotent. It is clearly nilpotent and uniquely divisible, and its lower central series filtration has graded pieces of the form $\Gamma^\ell G / \Gamma^{\ell+1} G \otimes \QQ$, for $\ell < k$, and these are finite-dimensional by hypothesis.
\end{proof}

Proposition \ref{prop:lcsadjoint} says that the assignment $G \mapsto \{ G/\Gamma^k G \otimes\QQ\}_{k}$ is a model for the left adjoint to the inclusion $\Pro(\cat{nMal}) \to \Pro(\cat{Grp})$ and, under finiteness conditions on $H_1$, for the left adjoint to the inclusion $\Pro({\QQ\cat{-uni}}) \to  \Pro(\cat{Grp})$ where $\QQ\cat{-uni}$ denotes the category of $\QQ$-unipotent groups.

\begin{prop}
Good groups are stable under finite coproducts.
\end{prop}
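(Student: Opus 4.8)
The plan is to reduce to the case of two factors: the empty coproduct is the trivial group, which is good, and a finite coproduct is obtained by iterating the binary one, so it suffices to show that if $G$ and $H$ are $\kk$-good then their free product $G*H$ (the coproduct in the category of groups) is $\kk$-good. First I would exploit the two standard descriptions of a free product. On the space side, the classifying space turns a free product into a wedge, and this wedge is a homotopy pushout, $B(G*H)\simeq BG\vee BH\simeq \textup{hocolim}(BG\gets *\to BH)$. Since the $\kk$-completion of spaces is a homotopy left adjoint (hence preserves homotopy colimits) and $*^\wedge\simeq *$, this yields a natural equivalence $(B(G*H))^\wedge\simeq (BG)^\wedge\vee (BH)^\wedge$ in $\ProS_\kk$. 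On the group side, completion is the left adjoint to $\Pro(\cat{Gpd}^{cofin}_\kk)\to\Pro(\mathbf{Gpd})$ and therefore preserves coproducts, so $(G*H)^\wedge\cong G^\wedge\sqcup H^\wedge$, the free pro-$\kk$ product of the completed groups.

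With these identifications in hand, I would assemble the commuting square (natural in $G$ and $H$, built from the goodness maps $(\ref{eq:goodmap})$ for $G$, $H$ and $G*H$):
\[
\xymatrix{
(BG)^\wedge \vee (BH)^\wedge \ar[r]^-{\sim} \ar[d]_-{\sim} & (B(G*H))^\wedge \ar[d] \\
B(G^\wedge) \vee B(H^\wedge) \ar[r]^-{\psi} & B((G*H)^\wedge) \; .
}
\]
The top map is the equivalence produced above; the left map is the wedge of the goodness maps for $G$ and $H$, hence a $\kk$-equivalence \emph{precisely because $G$ and $H$ are good}; and the right map is the goodness comparison map for $G*H$, which is exactly what we must show to be a $\kk$-equivalence. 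Since the two composites agree, the right map is a $\kk$-equivalence if and only if the bottom map $\psi$ is. Thus everything reduces to showing that $\psi$, the natural map from the wedge of classifying spaces to the classifying space of the free pro-$\kk$ product, is a weak equivalence in $\ProS_\kk$.

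Because weak equivalences in $\ProS_\kk$ are detected on continuous $\kk$-cohomology, the remaining claim is that, writing $P=G^\wedge$ and $Q=H^\wedge$, the induced map
\[
H^*(B(P\sqcup Q),\kk)\;\longrightarrow\; H^*(BP\vee BQ,\kk)=\kk\oplus\widetilde H^*(BP,\kk)\oplus\widetilde H^*(BQ,\kk)
\]
is an isomorphism; equivalently, that the continuous cohomology of a free pro-$\kk$ product splits as a direct sum of the continuous cohomologies of the factors in positive degrees. This cohomological splitting for completed free products is the one genuinely non-formal input, and is where I expect the main difficulty to lie. In the $\FF_p$ case $P\sqcup Q$ is the free pro-$p$ product and the splitting is the classical Mayer--Vietoris sequence for free pro-$p$ products. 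In the $\QQ$ case one argues analogously for the coproduct of pro-unipotent groups, using the Malcev-completion model $\{(G*H)/\Gamma^k(G*H)\otimes\QQ\}_k$ of Proposition \ref{prop:lcsadjoint} together with the lower-central-series filtration to build the corresponding long exact sequence. Once this splitting is established, $\psi$ is a $\kk$-equivalence and the square shows that $G*H$ is good; the rest is the formal bookkeeping recorded above.
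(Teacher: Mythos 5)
Your proposal follows the same skeleton as the paper's proof: both rest on (i) $B(G\star H)\simeq BG\vee BH$, (ii) the fact that completion, being a (homotopy) left adjoint, commutes with coproducts/wedges, and (iii) goodness of the two factors. Where you genuinely diverge is in the treatment of the crucial last step. The paper picks levelwise models $G^\wedge=\{G_i\}_{i\in I}$, $H^\wedge=\{H_i\}_{i\in I}$ and simply asserts that $\{G_i\star H_i\}_{i\in I}$ ``is a model for'' $(G\star H)^\wedge$. Read literally this cannot be an identification: a free product of nontrivial finite $p$-groups (resp.\ nontrivial $\QQ$-unipotent groups) is infinite and non-nilpotent, so $\{G_i\star H_i\}$ is not a pro-object of $\cat{Gpd}^{cofin}_{\kk}$ at all. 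What is true is that $(G\star H)^\wedge$ is the \emph{completion} of $\{G_i\star H_i\}$, and the paper's assertion is exactly equivalent to your claim that $\psi$ is a weak equivalence in $\ProS_\kk$ --- i.e.\ that the continuous $\kk$-cohomology of a free pro-$\kk$ product splits as a direct sum in positive degrees, equivalently that free products of groups in $\cat{Gpd}^{cofin}_{\kk}$ are themselves good. So your reduction isolates, correctly and explicitly, the one non-formal input that the paper's wording conceals; in that respect your write-up is more careful than the paper's own argument.

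Two caveats on how you discharge that input. For $\FF_p$, the appeal to the classical computation of continuous cohomology of free pro-$p$ products (Mayer--Vietoris, as in Neukirch--Schmidt--Wingberg or Serre) is legitimate and finishes the proof. For $\QQ$ --- which is the case the paper actually needs, since the proposition is invoked in the appendix to conclude that free groups are $\QQ$-good --- your sketch is thinner: Proposition \ref{prop:lcsadjoint} identifies $\{(G\star H)/\Gamma^k(G\star H)\otimes\QQ\}_k$ with the completion only under the hypothesis that $H_1(G\star H,\QQ)$ is finite dimensional, which is not among the hypotheses of the proposition being proved; and the ``corresponding long exact sequence'' for coproducts of pro-unipotent groups is not off-the-shelf in the way the pro-$p$ statement is (the cleanest route is probably the Malcev correspondence with pro-nilpotent Lie algebras, where the splitting of the cohomology of a free product of Lie algebras is classical). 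Neither caveat points to a wrong step --- the claim is true and your reduction to it is sound --- but they mark where real work remains; notably, it is the same work that the paper's proof silently omits.
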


\begin{proof}
We denote by $\star$ the coproduct of groups. Assume that $G$ and $H$ are good groups. We pick models $G^{\wedge}=\{G_i\}_{i\in I}$ and $H^{\wedge}=\{H_i\}_{i\in I}$ indexed by the same filtered category, which is always possible. In that case, one has that $\{G_i\star H_i\}_{i\in I}$ is a model for $(G\star H)^{\wedge}$. We know that $B(G\star H)\simeq BG\vee BH$. By goodness of $G$ and $H$, this implies that $(B(G\star H))^{\wedge}$ is weakly equivalent to the pro-object $\{BG_i\vee BH_i\}_{i\in I}$ which is itself weakly equivalent to $\{B(G_i\star H_i)\}_{i\in I}$.
\end{proof}

Now we can prove the main result.

\begin{prop}\label{prop : pure braid groups are Q-good}
The pure braid groups $P_n$ are good.
\end{prop}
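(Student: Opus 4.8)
The plan is to induct on $n$, exploiting the Fadell--Neuwirth short exact sequences
\[
1\to F_{n-1}\to P_n\to P_{n-1}\to 1
\]
already used above, where $F_{n-1}$ is free on $n-1$ generators and the map forgets a strand. The base case $n=1$ is trivial, since $P_1$ is the trivial group. The first observation I would record is that free groups are good: the free group on $r$ generators is the $r$-fold coproduct $\ZZ\star\cdots\star\ZZ$, each factor $\ZZ$ is good by Proposition~\ref{prop : fg abelian groups are good}, and coproducts of good groups are good by the previous proposition. In particular each fiber $F_{n-1}$ is good.

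The geometric input that makes the induction run is the theorem of Falk and Randell that the pure braid groups are \emph{almost direct products}: the conjugation action of $P_{n-1}$ on $H_1(F_{n-1},\QQ)$ is trivial. Since $F_{n-1}$ is free, its rational homology is concentrated in degrees $0$ and $1$, so $P_{n-1}$ acts nilpotently (in fact trivially) on $H_*(F_{n-1},\QQ)$. I would use this in two ways. On the one hand, the nilpotence of the monodromy allows one to apply the Bousfield--Kan fibration lemma \cite{BK}, transported to the pro-space setting of $\ProS_\QQ$, to the fibration $BF_{n-1}\to BP_n\to BP_{n-1}$; this shows that
\[
(BF_{n-1})^\wedge\to (BP_n)^\wedge\to (BP_{n-1})^\wedge
\]
is again a fiber sequence. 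On the other hand, the same hypothesis implies that Malcev (i.e.\ $\QQ$-unipotent) completion is exact on the extension, so that, in terms of the model of Proposition~\ref{prop:lcsadjoint},
\[
1\to (F_{n-1})^\wedge\to (P_n)^\wedge\to (P_{n-1})^\wedge\to 1
\]
is a short exact sequence of pro-unipotent groups, and hence $B((F_{n-1})^\wedge)\to B((P_n)^\wedge)\to B((P_{n-1})^\wedge)$ is a fiber sequence.

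The comparison maps $(BG)^\wedge\to B(G^\wedge)$ of $(\ref{eq:goodmap})$ assemble, by naturality, into a morphism from the first fiber sequence to the second. By the inductive hypothesis it is an equivalence on the base $BP_{n-1}$, and by the free case it is an equivalence on the fiber $BF_{n-1}$; comparing the associated long exact sequences of homotopy pro-groups (a five-lemma argument in $\ProS_\QQ$) then forces it to be an equivalence on the total space $BP_n$. This proves that $P_n$ is good and closes the induction. The hard part will be the two preservation statements in the previous paragraph: verifying that $\QQ$-completion preserves the Fadell--Neuwirth fibration, and that Malcev completion is exact on the extension. Both ultimately rest on the triviality of the monodromy on $H_1(F_{n-1},\QQ)$, but making the fibration lemma precise in the pro-space framework $\ProS_\QQ$, rather than for a single space, is where the real care is required.
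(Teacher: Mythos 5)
Your outline reproduces the paper's skeleton---induction on $n$, the Fadell--Neuwirth sequences, Falk--Randell's triviality of the action on $H_1(F_{n-1},\QQ)$, and goodness of free groups via coproducts---but it has a genuine gap at exactly the point you defer: the two ``preservation statements'' are asserted rather than proved, and they carry essentially all of the content. The more serious one is the claim that the Bousfield--Kan fibration lemma can be ``transported'' to $\ProS_\QQ$. The fibre lemma of \cite{BK} concerns the Bousfield--Kan completion $\QQ_\infty$, which is a different functor from the pro-space completion $(-)^\wedge_\QQ$ appearing in Definition \ref{defi : goodness}; the comparison between the two (Proposition \ref{prop: completion of nilpotent spaces}) is only available for \emph{nilpotent} spaces of finite type, and $BP_n$ is not nilpotent for $n\geq 3$ (the group $P_n$ contains nonabelian free subgroups), so there is no routine transport---the whole difficulty of goodness lives precisely in this non-nilpotence. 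Worse, given your step (b) and the inductive hypotheses, your step (a) is essentially equivalent to the statement being proved: if $P_n$ is good then the row $(BF_{n-1})^\wedge\to(BP_n)^\wedge\to(BP_{n-1})^\wedge$ is identified with $B(F_{n-1}^\wedge)\to B(P_n^\wedge)\to B(P_{n-1}^\wedge)$, which is a fibre sequence by (b), and conversely (a) plus your five-lemma step yields goodness. So assuming (a) begs the question. A second, smaller gap is that five-lemma step itself: weak equivalences in $\ProS_\QQ$ are defined cohomologically (via $\map^h(-,K)$ for $K\in\S^{cofin}_\QQ$), not via homotopy pro-groups, so concluding an equivalence from isomorphisms of pro-homotopy groups requires a Whitehead-type theorem for pro-objects of $\S^{cofin}_\QQ$, which you also do not supply.

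The paper's proof is engineered to avoid both issues by never leaving cohomology. It uses Lemma \ref{lem:falkrandell} (your Falk--Randell input) and exactness of $-\otimes\QQ$ on nilpotent groups to get, for each $k$, a short exact sequence
\[1\to F_n/\Gamma^kF_n\otimes\QQ\to P_{n+1}/\Gamma^kP_{n+1}\otimes\QQ\to P_n/\Gamma^kP_n\otimes\QQ\to 1 \; ,\]
and then compares the Leray--Serre spectral sequence (\ref{eqn : LS1}) of the original extension with the colimit over $k$ of the spectral sequences of these completed extensions, i.e.\ (\ref{eqn : LS2}). The identification of $E_2$-pages uses a cofinality and Fubini manipulation, the inductive goodness of $P_n$ applied to $\QQ$-unipotent coefficient systems (this is why Lemma \ref{lem:ext} is needed), and the goodness of $F_n$. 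If you want to salvage your outline, the shortest route to your statement (a) is precisely this spectral sequence comparison---at which point the fibre-sequence and five-lemma language becomes superfluous.
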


\begin{proof}
We prove this by induction on $n$. First, we observe that $\mathbb{Z}$ is good. Indeed $\mathbb{Z}^{\wedge}\cong\QQ$ and the map $B\ZZ\to B\QQ$ is certainly a $\QQ$-cohomology equivalence. This means that $P_2$ is good which is the base case of our induction. Using the previous proposition, we can also conclude that all free groups $F_n$ are good. We assume that $P_n$ is good.

There is a split short exact sequence
\[1\to F_n\to P_{n+1}\to P_n\to 1\]
where the second map forgets the last strand of the braid. This short exact sequence gives us a Leray-Serre spectral sequence
\begin{equation}\label{eqn : LS1}
E_2^ {i,j}=H^i(P_n,H^j(F_n,\QQ))\implies H^{i+j}(P_{n+1},\QQ)
\end{equation}
It is well known that the action of $P_n$ on $F_n$ is trivial after abelianization. It follows, using Lemma \ref{lem:falkrandell} below, that the sequence
\[1\to F_n/\Gamma^kF_n\to P_{n+1}/\Gamma^kP_{n+1}\to P_n/\Gamma^kP_n\to 1 \]
is exact for each $k$. Applying $- \otimes \QQ$, the sequence
\[1\to F_n/\Gamma^kF_n\otimes\QQ\to P_{n+1}/\Gamma^kP_{n+1}\otimes\QQ\to P_n/\Gamma^kP_n\otimes\QQ\to 1 \; .\]
remains exact by \cite[V.2.4]{BK}. We then obtain a Leray-Serre spectral sequence associated to this short exact sequence, for each $k$. Since filtered colimits are exact, these spectral sequences assemble into a single spectral sequence

\begin{equation}\label{eqn : LS2}
E_2^ {i,j}=\colimsub{k}H^i(P_n/\Gamma^kP_n\otimes\QQ,H^j(F_n/\Gamma^kF_n\otimes\QQ,\QQ))
\end{equation}
\[
\implies \colimsub{k}H^{i+j}(P_{n+1}/\Gamma^kP_{n+1})\otimes\QQ,\QQ)\;.
\]
Now, there is a map from the spectral sequence (\ref{eqn : LS2}) to the spectral sequence (\ref{eqn : LS1}). We claim that this map induces an isomorphism on $E_2$-pages. Assuming this for the moment, it follows that the map $BP_{n+1}\to B(P_{n+1}^{\wedge})$ is an isomorphism on cohomology, and this finishes the proof.

We now give a proof of the claim that the two $E_2$-pages coincide. We denote the $E_2$-pages of the first and sequence spectral sequences by $E_2^{I}$ and $E_2^{II}$, respectively. There is an isomorphism
\[E_2^{II}=\colimsub{k\geq \ell}H^i(P_n/\Gamma^kP_n\otimes\QQ,H^j(F_n/\Gamma^\ell F_n\otimes\QQ,\QQ))\]
where the action of $P_n/\Gamma^k P_n$ on $F_n/\Gamma^\ell F_n$ is obtained by restricting the action of $P_n/\Gamma^\ell P_n$ on $F_n/\Gamma^\ell F_n$ along the quotient map $P_n/\Gamma^k P_n\to P_n/\Gamma^\ell P_n$. This isomorphism comes from the observation that the poset of pairs of integers $(k,\ell)$ with $k\geq \ell$ contains the subposet of pairs of the form $(k,k)$ and that this inclusion is terminal (alias cofinal). By the Fubini theorem for colimits, we can rewrite this isomorphism as
\[E_2^{II}=\colimsub{\ell}\colimsub{k\geq \ell}H^i(P_n/\Gamma^kP_n\otimes\QQ,H^j(F_n/\Gamma^\ell F_n\otimes\QQ,\QQ)) \; .\]
Using the goodness of the pure braid group $P_n$, we obtain an isomorphism
\[E_2^{II}=\colimsub{\ell}H^i(P_n,H^j(F_n/\Gamma^\ell F_n\otimes\QQ,\QQ)) \; ,\]
since the action of $P_n$ on $H^j(F_n/\Gamma^\ell F_n\otimes\QQ,\QQ)$ is $\QQ$-unipotent. (This follows by induction on $\ell$. The action of $P_n$ on $\Gamma^\ell F_n / \Gamma^{\ell+1} F_n \otimes \QQ$ is trivial, hence we can apply Lemma \ref{lem:ext} below.)

Finally, using the goodness of the free group, this reduces to
\[E_2^{II}=H^i(P_n,H^j(F_n,\QQ))=E_2^{I} \; ,\]
hence proving the claim.
\end{proof}

\begin{lem}{\cite[Theorem 3.1]{falkrandell}}\label{lem:falkrandell}
Let 
\[1\to F\to G\to H\to 1\]
be a split short exact sequence of groups such that the action of $H$ on the abelianization of $F$ is trivial. Then, for each $k$, there is a short exact sequence
\[1\to F/\Gamma^kF\to G/\Gamma^kG\to H/\Gamma^kH\to 1 \; .\]
\end{lem}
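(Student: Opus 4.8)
The plan is to reduce the assertion to a single identity about the lower central series of the semidirect product $G=F\rtimes H$ and to prove that identity by induction on $k$.

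Fix a section $s\colon H\to G$ of the split sequence, so that $F\cap s(H)=1$, and write $p\colon G\to H$ for the projection. First I would dispose of everything except injectivity of the left-hand map. Since $p$ is surjective we have $p(\Gamma^k G)=\Gamma^k H$, so $G/\Gamma^k G\to H/\Gamma^k H$ is onto and its kernel consists of the classes of those $g$ with $p(g)\in\Gamma^k H$, i.e. of $g\in F\cdot\Gamma^k G$; this kernel is exactly the image of $F$, so the middle of the sequence is automatically exact. The section $s$ splits the right-hand map. Hence the entire content of the lemma is the injectivity of $F/\Gamma^k F\to G/\Gamma^k G$, equivalently the identity $F\cap\Gamma^k G=\Gamma^k F$ (the inclusion $\Gamma^k F\subseteq F\cap\Gamma^k G$ being automatic).

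I would obtain this from the sharper claim
\[
\Gamma^k G=\Gamma^k F\cdot s(\Gamma^k H),
\]
proved by induction on $k$. Granting it, the modular law and $F\cap s(H)=1$ give $F\cap\Gamma^k G=\Gamma^k F\cdot(F\cap s(\Gamma^k H))=\Gamma^k F$. The base case $k=1$ is just $G=F\cdot s(H)$. For the inductive step I would expand $\Gamma^{k+1}G=[\Gamma^k G,G]=[\Gamma^k F\cdot s(\Gamma^k H),\,F\cdot s(H)]$ using the identities $[xy,z]=[x,z]^y[y,z]$ and $[x,yz]=[x,z][x,y]^z$; this exhibits $\Gamma^{k+1}G$ as the normal closure of four families, namely $[\Gamma^k F,F]=\Gamma^{k+1}F$, the term $[s(\Gamma^k H),s(H)]=s(\Gamma^{k+1}H)$, and the two cross terms $[\Gamma^k F,s(H)]$ and $[s(\Gamma^k H),F]$.

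The hard part is to show that both cross terms land in $\Gamma^{k+1}F$; everything hinges on a filtration-shifting refinement of the hypothesis. Feeding the action homomorphism $H\to\aut(F/\Gamma^m F)$ into the Andreadakis--Johnson filtration $A_i=\{\varphi:\varphi(x)x^{-1}\in\Gamma^{i+1}(F/\Gamma^m F)\text{ for all }x\}$, which satisfies $[A_i,A_j]\subseteq A_{i+j}$ and $\varphi(x)x^{-1}\in\Gamma^{i+j}(F/\Gamma^m F)$ whenever $\varphi\in A_i$ and $x\in\Gamma^j$, one sees that the hypothesis ($H$ acting trivially on $F/\Gamma^2 F$) places the image in $A_1$, whence $\Gamma^i H$ lands in $\Gamma^i A_1\subseteq A_i$. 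Concretely this yields ${}^{h}x\cdot x^{-1}\in\Gamma^{i+j}F$ for $h\in\Gamma^i H$ and $x\in\Gamma^j F$. Applying this with $(i,j)=(1,k)$ bounds $[\Gamma^k F,s(H)]\subseteq\Gamma^{k+1}F$, and with $(i,j)=(k,1)$ bounds $[s(\Gamma^k H),F]\subseteq\Gamma^{k+1}F$. The same estimate, now at level $k+1$, shows that $\Gamma^{k+1}F\cdot s(\Gamma^{k+1}H)$ is a normal subgroup of $G$, so it contains the normal closure of the four families, giving $\Gamma^{k+1}G\subseteq\Gamma^{k+1}F\cdot s(\Gamma^{k+1}H)$; the reverse inclusion is immediate since $\Gamma^{k+1}F=[\Gamma^k F,F]$ and $s(\Gamma^{k+1}H)=[s(\Gamma^k H),s(H)]$ already lie in $\Gamma^{k+1}G$. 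This closes the induction and proves the lemma. I expect the bookkeeping around conjugates and normal closures in the cross terms to be the only delicate point, all of it ultimately controlled by the single estimate $[A_i,A_j]\subseteq A_{i+j}$ together with the triviality of the action on $F/\Gamma^2 F$.
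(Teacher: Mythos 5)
Your proof is correct, but it takes a genuinely different route from the paper's. The paper treats \cite[Theorem 3.1]{falkrandell} as a black box: that citation supplies exactness of the graded sequence $1\to\Gamma^kF/\Gamma^{k+1}F\to\Gamma^kG/\Gamma^{k+1}G\to\Gamma^kH/\Gamma^{k+1}H\to 1$ (the top row of a $3\times 3$ diagram of classifying spaces), and the induction on $k$ is then run homotopy-theoretically --- the columns are fibration sequences, centrality of $\Gamma^kF/\Gamma^{k+1}F$ in $F/\Gamma^{k+1}F$ makes them principal and hence deloopable to fiber sequences ending in $K(\Gamma^kF/\Gamma^{k+1}F,2)$, and closure of fiber sequences under taking fibers produces the middle row, which is the level-$(k+1)$ statement. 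You, by contrast, never invoke the cited theorem: your induction establishes the sharper group-theoretic identity $\Gamma^kG=\Gamma^kF\cdot s(\Gamma^kH)$, which is essentially Falk--Randell's Theorem 3.1 itself, by commutator calculus, with the two cross terms $[\Gamma^kF,s(H)]$ and $[s(\Gamma^kH),F]$ controlled by the Andreadakis--Johnson estimates $[A_i,A_j]\subseteq A_{i+j}$ and $\varphi(x)x^{-1}\in\Gamma^{i+j}$ for $\varphi\in A_i$, $x\in\Gamma^j$; the lemma then drops out via the modular law and $F\cap s(H)=1$. I checked the delicate points --- middle exactness being automatic, the normality of $\Gamma^{k+1}F\cdot s(\Gamma^{k+1}H)$ (which indeed needs your estimate at $(i,j)=(k+1,1)$), and the reverse inclusion --- and they all go through. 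Your route buys a purely algebraic, self-contained argument (self-contained modulo the Andreadakis filtration facts, which are classical but nontrivial and should carry a citation, e.g.\ to Andreadakis' 1965 paper) and the stronger semidirect-product decomposition of $\Gamma^kG$; the paper's route buys brevity, outsourcing all commutator calculus to the reference and staying inside the fibration-sequence formalism used throughout the rest of the paper.
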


\begin{proof}
Since this result is not quite phrased in this manner in \cite{falkrandell}, we provide some details. We prove this by induction on $k$. The base case holds by assumption. We may translate the question into a lemma about fiber sequences. Consider the following commutative diagram of spaces
\[
\xymatrix{
B\Gamma^{k}F/\Gamma^{k+1}F\ar[r]\ar[d]& B\Gamma^kG/\Gamma^{k+1}G\ar[r]\ar[d]&B\Gamma^kH/\Gamma^{k+1}H\ar[d]\\ 
BF/\Gamma^{k+1}F\ar[r]\ar[d]& BG/\Gamma^{k+1}G\ar[r]\ar[d]&BH/\Gamma^{k+1}H\ar[d]\\ 
BF/\Gamma^{k}F\ar[r]& BG/\Gamma^{k}G\ar[r]&BH/\Gamma^{k}H
}
\]
The bottom row is a fiber sequence by the induction hypothesis, the top row is a fiber sequence by \cite[Theorem 3.1]{falkrandell}. The three columns are fiber sequences. Moreover, the inclusion $\Gamma^kF/\Gamma^{k+1}F\to F/\Gamma^{k+1}F$ is central. This implies that the leftmost column has trivial monodromy and so is a principal fibration. Therefore, the leftmost column extends to a fiber sequence
\[BF/\Gamma^{k+1}F\to BF/\Gamma^{k}F\to K(\Gamma^kF/\Gamma^{k+1}F,2)\]
and similarly for the other two columns. Since fiber sequences are stable under the operation of taking fibers, we are done.
\end{proof}

\begin{lem}\label{lem:ext}
Suppose we have a central extension
\[
1 \to A \to B \to C \to 1
\]
acted on by a group $G$, and the action of $G$ on $H^*(A, \QQ)$ and $H^*(C, \QQ)$ is $\QQ$-unipotent. Then the action of $G$ on $H^*(B, \QQ)$ is $\QQ$-unipotent.
\end{lem}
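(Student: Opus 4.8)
The plan is to run the Lyndon--Hochschild--Serre (LHS) spectral sequence of the central extension and to track the $G$-action through every page, using that the class of $\QQ$-unipotent representations (Definition \ref{def : unipotent}) is closed under the operations that appear. Concretely, since $G$ acts on $B$ preserving the subgroup $A$, it acts compatibly on $A$ and on the quotient $C = B/A$, and by naturality of the LHS construction the whole spectral sequence
\[
E_2^{i,j} = H^i(C, H^j(A, \QQ)) \Longrightarrow H^{i+j}(B, \QQ)
\]
becomes $G$-equivariant.

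The first key point is that centrality of $A$ forces the conjugation action of $B$ on $A$ to be trivial, hence $C$ acts trivially on $A$ and on $H^j(A, \QQ)$. Thus the coefficient system is trivial and the $E_2$ page is $E_2^{i,j} \cong H^i(C, \QQ) \otimes H^j(A, \QQ)$, with the $G$-action identified as the tensor product of the given $G$-action on $H^i(C, \QQ)$ and the given $G$-action on $H^j(A, \QQ)$. I would then record the three closure properties of $\QQ$-unipotent representations that I need: closure under (i) subquotients, (ii) extensions, and (iii) tensor products. Property (iii) (via the product of the two filtrations, whose graded pieces are tensor products of trivial representations, hence trivial) shows that each $E_2^{i,j}$ is $\QQ$-unipotent.

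From there the argument is formal. The differentials $d_r$ are $G$-equivariant, so each later term $E_r^{i,j}$ is a $G$-subquotient of $E_2^{i,j}$; by property (i) every page, and in particular $E_\infty^{i,j}$, is $\QQ$-unipotent. Finally, $H^n(B, \QQ)$ carries a $G$-stable filtration, finite because only the bidegrees with $i + j = n$ and $i, j \geq 0$ contribute, whose associated graded pieces are the $E_\infty^{i,j}$. Applying property (ii) to this finite filtration upgrades unipotence of the graded pieces to unipotence of $H^n(B, \QQ)$, which is exactly the conclusion.

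The core of the work is verifying the closure properties of $\QQ$-unipotent representations, which is elementary linear algebra about filtrations; this is where the only care is needed, together with confirming that the $E_2$-action is genuinely the tensor of the two hypothesized actions (i.e. that centrality trivializes the $C$-module structure while leaving the $G$-structure intact) and that the abutment filtration is $G$-equivariant. In the intended application all the groups involved have finite-dimensional rational cohomology in each degree, so the relevant filtrations may be taken finite and no convergence subtleties arise.
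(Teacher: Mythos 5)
Your proposal is correct and follows essentially the same route as the paper: the paper's proof also runs the (Lyndon--Hochschild--)Serre spectral sequence of the central extension, identifies the $E_2$-page as $H^*(C,\QQ)\otimes H^*(A,\QQ)$, and invokes closure of $\QQ$-unipotent representations under subquotients, extensions, and tensor products to pass to $E_\infty$ and then to $H^*(B,\QQ)$. Your write-up simply makes explicit the equivariance and filtration details that the paper leaves implicit.
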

\begin{proof}
Under the assumptions, the $E_2$-page of the Leray-Serre spectral sequence is of the form $H^*(C, \QQ) \otimes H^*(A, \QQ)$, and is therefore a $\QQ$-unipotent representation of $G$. Since $\QQ$-unipotent representations are stable under quotients, subobjects and extensions, the $E_{\infty}$-page is $\QQ$-unipotent and so is $H^*(B, \QQ)$.
\end{proof}

\section{Chains on pro-spaces}

The purpose of this appendix is to give a proof of Proposition \ref{prop:chains-dend}. Throughout, $\kk$ denotes $\FF_p$ or $\QQ$. We will use the language of $\infty$-categories freely. 

\medskip
In this section, we take a different perspective on $\infty$-operads. The analogy with the strict case is the following. 
Given a category $\cat{C}$ with finite products, let $\cat{C}^\times$ denote the operad whose colors are collections of objects in $\cat{C}$ and operations as follows: an element in $\cat{C}^\times(c_1, \dots, c_n; c)$ is a morphism $c_1 \times \dots \times c_n \to c$ in $\mathbf{C}$. Let $\mathcal{O}$ denote the colored operad which encodes monochromatic symmetric operads. Somewhat tautologically, a monochromatic operad in $\cat{C}$ is the same data as a map $\mathcal{O} \to \cat{C}^{\times}$ of operads.

\medskip
Similarly, for an $\infty$-category $\cat{C}$ with finite products, let us denote by $\cat{Alg}_{\mathcal{O}}(\cat{C}^\times)$ the $\infty$-category of $\mathcal{O}$-algebras in $\cat{C}^\times$, by which we mean the $\infty$-category of maps of $\infty$-operads $\mathcal{O}\to\cat{C}^\times$. There is a comparison map
\begin{equation}\label{eq:comparison-operads}
\Op_{\infty}\cat{C} \to \cat{Alg}_{\mathcal{O}}(\cat{C}^\times)
\end{equation}
of $\infty$-categories whose construction we explain below. In this context, an object in $\Op_{\infty}\cat{C}$ is an $\infty$-functor $\mathsf{Tree}^\op \to \cat{C}$ satisfying a Segal condition and $X_\eta \sim *$.

\medskip
Let $\Op\cat{Sets}$ denote the category of \emph{monochromatic} operads in $\cat{Sets}$. Consider the functor 
\[
\iota : \mathsf{Tree} \to \Op\cat{Sets}
\]
that sends a tree $T$ to the coproduct, taken over the vertices $v$ of $T$, of the free (monochromatic) operad on a single operation in arity $|v|$, the number of inputs at $v$. Let $\mathcal{F}$ be the essential image of $\iota^\op$, the opposite of $\iota$. Then there is a fairly tautological map
\[
\Op_{\infty}\cat{C} \to \mathrm{Fun}^{\times}(\mathcal{F}, \cat{C})
\]
where $\mathrm{Fun}^{\times}$ stands for product-preserving $\infty$-functors. Moreover, the assignment that takes an $\infty$-category with finite products $\cat{C}$ to an $\infty$-operad $\cat{C}^\times$ defines a functor
\[
\mathrm{Fun}^{\times}(\mathcal{F}, \cat{C}) \to \cat{Alg}_{\mathcal{F}^\times}(\cat{C}^\times) \;
\]
where the target is, as before, the $\infty$-category of $\infty$-operad maps $\mathcal{F}^\times\to\cat{C}^{\times}$. To complete the description of (\ref{eq:comparison-operads}), we pre-compose along a map $\mathcal{O} \to \mathcal{F}^\times$. To specify one such map amounts to producing an operad in $\mathcal{F}$ or, alternatively, a cooperad in $\mathcal{F}^\op = \iota(\mathsf{Tree})$. There is a canonical choice for such, namely the cooperad whose $n$-term is the free operad on a single $n$-arity operation. 

This defines the map (\ref{eq:comparison-operads}). Each of the intermediary maps is in fact an equivalence of $\infty$-categories, but that does not concern us here. For more details, we refer the reader for example to \cite{chuenriched} or \cite{luriehigher}.

\medskip
In order to prove Proposition \ref{prop:chains-dend}, we also need to deal with pro-chain complexes and $\infty$-operads of such.

\begin{defn}
The $n$-cohomology group of a pro chain complex $C=\{C_i\}_{i\in I}$ is defined by the formula
\[H^k(C)=\colimsub{i\in I}H_{-k}(C_i^\vee)\]
where $\vee$ denotes the linear dual. 
\end{defn}
Let $\Pro(\cat{Ch}_*)_{\kk}$ be the localization of $\Pro(\cat{Ch}_*)$ with respect to the cohomology equivalences. One way to show that this exists as an $\infty$-category is to use \cite[Theorem 4.4]{christensenisaksen} in order to construct a model category that presents it. (The model category $\cat{Ch}_*$ is not simplicial but the authors add in \cite[Remark 2.7]{christensenisaksen} that this assumption is not crucial.) 

Let $L_\kk$ denote the localization functor $\Pro(\cat{Ch}_*) \to \Pro(\cat{Ch}_*)_\kk$. The $\infty$-category $\Pro(\cat{Ch}_*)_\kk$ can be identified with the pro-category of the category of chain complexes with finite dimensional homology. As in the case of spaces, there exists an adjunction
\[\cat{Ch}_*\rightleftarrows\Pro(\cat{Ch_*})_\kk\]
whose left adjoint is denoted $C\mapsto C^{\wedge}$, it is simply the composition of the inclusion
\[\cat{Ch}_*\to\Pro(\cat{Ch}_*)\]
with the localization functor $L_\kk$. The right adjoint is denoted $C\mapsto \Mat(C)$. Since we do everything $\infty$-categorically in this section, we allow ourselves to write $\Mat$ instead of $\RR\Mat$. The functor $\Mat$ simply takes a pro-chain complex to its limit. 

Also, taking singular chains with coefficients in $\kk$ gives rise to a left adjoint functor:
\[C_*:\ProS_\kk\to\Pro(\cat{Ch}_*)_\kk \; ,\]
which takes a pro-space $X=\{X_i\}_{i\in I}$ to the pro chain complex $L_\kk \{C_*(X_i)\}_{i\in I}$, that is, the $\kk$-cohomological localization of  the pro-chain complex $\{C_*(X_i)\}_{i\in I}$ obtained by taking singular chains objectwise. The right adjoint of this functor has an easy description. If $C_*$ is a chain complex with finite dimensional homology, then the space $\map(\Bbbk,C_*)$ lives in $\S^{cofin}_\kk$. This defines the right adjoint on constant pro-objects. By the universal property of the pro-category, we can then extend this uniquely into a limit preserving functor
\[\Pro(\cat{Ch}_*)_\kk\to\ProS_\kk \; .\]

\begin{lem}\label{lem:chains completion}
Let $X$ be a space of finite type $\kk$-type (i.e. whose homology with coefficients in $\kk$ is finite dimensional in each degree). Then the map
\[
C_*(X) \to \Mat(C_*(X^\wedge))
\]
adjoint to $C_*(X)^\wedge \to C_*(X^\wedge)$, is a weak equivalence of chain complexes.
\end{lem}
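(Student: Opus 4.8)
The plan is to compute $\Mat(C_*(X^\wedge))$ explicitly and to show that the comparison map is a quasi-isomorphism of chain complexes over $\kk$, using $\kk$-linear duality to trade the homotopy limit that computes $\Mat$ for an exact filtered homotopy colimit. First I would write $X^\wedge=\{X_i\}_{i\in I}$ with $I$ cofiltered and each $X_i\in\S^{cofin}_\kk$. As established in the proof of Proposition \ref{prop:homologycompl}, every space in $\S^{cofin}_\kk$ has finite $\kk$-type, so each $C_*(X_i,\kk)$ has degreewise finite-dimensional homology. Hence the pro-chain complex $\{C_*(X_i,\kk)\}_{i\in I}$ already lies in the subcategory of $\Pro(\cat{Ch}_*)$ modelling $\Pro(\cat{Ch}_*)_\kk$, so it models $C_*(X^\wedge)$; since $\Mat$ is right adjoint to $(-)^\wedge$ and acts as the homotopy limit on such local objects, this gives
\[
\Mat(C_*(X^\wedge))\simeq \operatorname{holim}_{i\in I} C_*(X_i,\kk).
\]

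Then I would pass to duals, writing $(-)^\vee=\Hom_\kk(-,\kk)$. Over a field, a chain complex with degreewise finite-dimensional homology receives a quasi-isomorphism from its bidual, so for each $i$ the canonical map $C_*(X_i,\kk)\to C^*(X_i,\kk)^\vee$ is a quasi-isomorphism, where $C^*(X_i,\kk)$ denotes singular cochains. As $(-)^\vee$ is an exact contravariant functor, it carries the homotopy limit over $I$ to a homotopy colimit over $I^\op$, yielding
\[
\Mat(C_*(X^\wedge))\simeq \operatorname{holim}_{i\in I} C^*(X_i,\kk)^\vee \simeq \Bigl(\operatorname{hocolim}_{i\in I^\op} C^*(X_i,\kk)\Bigr)^\vee.
\]
Now $I^\op$ is filtered, so the homotopy colimit is an ordinary filtered colimit, which is exact; its cohomology is therefore $\operatorname{colim}_i H^*(X_i,\kk)=H^*(X^\wedge,\kk)$. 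Since $X\to X^\wedge$ is a $\kk$-equivalence (Definition \ref{defn:completion}), the restriction maps $C^*(X_i,\kk)\to C^*(X,\kk)$ assemble into a quasi-isomorphism $\operatorname{hocolim}_{i} C^*(X_i,\kk)\xrightarrow{\sim} C^*(X,\kk)$. Dualizing and using once more that $X$ has finite $\kk$-type, so that $C_*(X,\kk)\to C^*(X,\kk)^\vee$ is a quasi-isomorphism, I obtain a zig-zag of quasi-isomorphisms identifying $\Mat(C_*(X^\wedge))$ with $C^*(X,\kk)^\vee\simeq C_*(X,\kk)$.

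The hard part is not any of these individual equivalences but the bookkeeping that identifies this composite with the specific comparison map of the statement, namely the map adjoint to $C_*(X)^\wedge\to C_*(X^\wedge)$. Concretely, the pro-map $C_*(X)^\wedge\to C_*(X^\wedge)$ is induced by the restriction maps $C_*(X,\kk)\to C_*(X_i,\kk)$, and its adjoint is the canonical map of $C_*(X,\kk)$ into the homotopy limit; I would check that under $(-)^\vee$ this canonical map corresponds precisely to the biduality map dual to $\operatorname{hocolim}_i C^*(X_i,\kk)\to C^*(X,\kk)$, so that naturality of biduality and of the restriction maps forces the composite above to be the comparison map. The only other point requiring care is the first displayed identification: it rests on the description of $\Pro(\cat{Ch}_*)_\kk$ as the pro-category of finite-homology complexes together with the fact that, over a field, $L_\kk$ leaves unchanged a pro-object whose terms already have finite-dimensional homology.
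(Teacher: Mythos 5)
Your strategy --- computing $\Mat(C_*(X^\wedge))$ as a homotopy limit of levelwise chains and trading it, via $\kk$-linear duality, for a filtered colimit of cochains --- is genuinely different from the paper's proof, but as written it has a gap at precisely the step you flag as delicate. The claim that $\{C_*(X_i,\kk)\}_{i\in I}$ ``already lies in the subcategory of $\Pro(\cat{Ch}_*)$ modelling $\Pro(\cat{Ch}_*)_\kk$'' because each term has degreewise finite-dimensional homology is false. The category whose pro-objects give the local objects of $\Pro(\cat{Ch}_*)_\kk$ is the closure of the complexes $\kk[n]$ under finite homotopy limits and retracts, i.e.\ complexes with finite \emph{total} homology; this boundedness condition is the chain-level analogue of the truncatedness built into $\S^{cofin}_\kk$, and it is not implied by degreewise finiteness. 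Spaces in $\S^{cofin}_\kk$ typically have unbounded homology ($K(\ZZ/p,1)$ for $\kk=\FF_p$, $K(\QQ,2)$ for $\kk=\QQ$), so $C_*(X_i,\kk)$ need not lie in that category, and the pro-object need not be local. Concretely, the constant pro-object on $C:=C_*(K(\ZZ/p,1),\FF_p)$ is not local: the map $C\to\{t_{\leq N}C\}_N$ to its Postnikov tower is a cohomology equivalence, so locality would produce a class in $\operatorname{colim}_N[t_{\leq N}C,C]$ hitting $[\mathrm{id}_C]$, i.e.\ a factorization of the identity through a truncation, which is impossible since $H_*(C)$ is nonzero in arbitrarily high degrees. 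Consequently you cannot invoke ``$\Mat=\operatorname{holim}$ on local objects'' to get your first displayed identification; that is exactly where the argument, as stated, breaks.

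The identification $\Mat(C_*(X^\wedge))\simeq\operatorname{holim}_{i}C_*(X_i,\kk)$ is nevertheless true, and the repair is the very ingredient the paper uses: a local replacement of $\{C_*(X_i)\}_i$ is the doubly-indexed Postnikov pro-object $\{t_{\leq N}C_*(X_i)\}_{(i,N)}$ (each term has finite total homology because $X_i$ has finite $\kk$-type, and the comparison map is a cohomology equivalence), and its homotopy limit agrees with $\operatorname{holim}_i C_*(X_i)$ by Fubini together with $\operatorname{holim}_N t_{\leq N}C\simeq C$. With this inserted, the remainder of your argument goes through: biduality $C_*(Y)\to C^*(Y,\kk)^\vee$ is a quasi-isomorphism for $Y=X_i$ and $Y=X$ by degreewise finiteness, dualization identifies $\operatorname{holim}_i C^*(X_i)^\vee$ with $\bigl(\operatorname{colim}_i C^*(X_i)\bigr)^\vee$ (here you should also justify that the underived limit of the dual system computes the homotopy limit; derived limits vanish for cofiltered systems of duals of vector spaces), and $\operatorname{colim}_i H^*(X_i,\kk)\cong H^*(X,\kk)$ because $X\to X^\wedge$ is a $\kk$-equivalence. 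For comparison, the paper avoids duality altogether: it factors the map as $C_*(X)\to\Mat(C_*(X)^\wedge)\to\Mat(C_*(X^\wedge))$, obtains the second equivalence from the commutation of the left adjoints $C_*$ and $(-)^\wedge$ (their right adjoints visibly commute), and proves the first is an equivalence by using the Postnikov tower of $C_*(X)$ as an explicit model for its completion --- that is, the same truncation argument your step one is missing, applied to $C_*(X)$ rather than to the levels $C_*(X_i)$.
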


\begin{proof}
This map admits a factorization through the unit map, as follows:
\[
C_*(X) \to \Mat(C_*(X)^{\wedge}) \to \Mat(C_*(X^\wedge)) \; .
\]
The right-hand map is a weak equivalence in $\Pro(\cat{Ch}_*)_\kk$. This follows from the commutativity of the diagram of left adjoint functors
\[
	\begin{tikzpicture}[descr/.style={fill=white}]
	\matrix(m)[matrix of math nodes, row sep=2.5em, column sep=2.5em,
	text height=1.5ex, text depth=0.25ex]
	{
	\S & \ProS_\kk  \\
	\cat{Ch}_* & \Pro(\cat{Ch}_*)_{\kk} \;  \\
	};
	\path[->,font=\scriptsize]
		(m-1-1) edge node [auto] {} (m-1-2)
		(m-1-1) edge node [auto] {} (m-2-1)
		(m-1-2) edge node [auto] {} (m-2-2)
		(m-2-1) edge node [auto] {} (m-2-2);
	\end{tikzpicture}
\]
which itself follows from the obvious commutativity of the corresponding diagram of right adjoints. 

For the left-hand map, we can use as a model for the completion of $C_*(X)$ its Postnikov tower viewed as a pro-object in chain complexes. Indeed, viewing $C_*(X)$ as a constant pro-object, the map from $C_*(X)$ to its Postnikov tower is an equivalence in $\Pro(\cat{Ch}_*)_{\kk}$. The finite type assumption guarantees that the Postnikov tower is a local object in $\Pro(\cat{Ch}_*)_\kk$. Since the map from $C_*(X)$ to the limit of the Postnikov tower is a weak equivalence, we are done.
\end{proof}

\begin{proof}[Proof of Proposition \ref{prop:chains-dend}]
The functor
\[
C^\prime_* : \cat{Op}_{\infty}\ProS_\kk \to \cat{OpCh_*} \;
\]
is constructed as the composition of four functors. The first one is the functor
\[\cat{Op}_{\infty}\ProS_\kk\to\cat{Alg}_{\mathcal{O}}(\ProS_\kk^\times)\]
that we described at the beginning of the section. The second is the functor
\[\cat{Alg}_{\mathcal{O}}(\ProS_\kk^\times)\to\cat{Alg}_{\mathcal{O}}(\Pro(\cat{Ch}_*)_\kk^\otimes)\]
induced by $C_*$. The third functor is the functor
\[\cat{Alg}_{\mathcal{O}}(\Pro(\cat{Ch}_*)_\kk^\otimes)\to \cat{Alg}_{\mathcal{O}}(\cat{Ch}_*^\otimes)\]
induced by $\Mat$. This is justified since the functor 
\[
\Mat:\Pro(\cat{Ch}_*)^{\otimes}_\kk\to\cat{Ch}_*^{\otimes}
\]
is lax symmetric monoidal by general principles \cite[Corollary 7.3.2.7]{luriehigher}, as it is the right adjoint to the strong symmetric monoidal functor $(-)^\wedge$. 

Finally, by \cite{Hinich}, there is a rectification functor from $\cat{Alg}_{\mathcal{O}}(\cat{Ch}_*^\otimes)$ to the $\infty$-category underlying the model of (strict, monochromatic) dg-operads $\cat{OpCh_*}$. The rectification of $\Mat(C_*(X))$ is, by definition, $C^\prime_*(X)$.

Now, take an operad $P$ in spaces and view it as a map $\mathcal{O} \to \S^\times$ of $\infty$-operads. Completing we obtain $P^\wedge : \mathcal{O} \to \ProS^\times_\kk$ (by Proposition \ref{prop : completion commutes with product}). There is a canonical functor
\[
C_*(P) \to \Mat(C_*(P^\wedge))
\]
which is adjoint to $C_*(P)^\wedge \to C_*(P^\wedge)$. This defines the required map 
\[
C_*(P) \to C_*^\prime(P)
\]
since $C^\prime_*(P)$ is the result of rectifying $\Mat(C_*(P^\wedge))$. That this map is an equivalence when each $P(n)$ is of finite type is an immediate consequence of Lemma \ref{lem:chains completion}.
\end{proof}

\bibliographystyle{acm}

\bibliography{biblio}

\end{document}